\newcommand{\ds}{\displaystyle}
\newcommand{\bc}{\begin{center}}
\newcommand{\ec}{\end{center}}
\newcommand{\1}{\mathbf{1}}
\newcommand{\E}{\mathbb{E}}
\newcommand{\V}{\mathbb{V}}
\renewcommand{\P}{\mathbb{P}}
\newcommand{\R}{\mathbb{R}}
\newcommand{\N}{\mathbb{N}}
\newcommand{\cB}{\mathcal{B}}
\newcommand{\cC}{\mathcal{C}}
\newcommand{\cF}{\mathcal{F}}
\newcommand{\cH}{\mathcal{H}}
\newcommand{\cU}{\mathcal{U}}
\newcommand{\cV}{\mathcal{V}}
\newcommand{\cS}{\mathcal{S}}
\newcommand{\gU}{\mathfrak{U}}
\newcommand{\gV}{\mathfrak{V}}
\newcommand{\W}{\mathbb{W}}
\newcommand{\cA}{\mathcal{A}}
\begin{document}

\title{ Global sensitivity analysis for models\\ described by stochastic differential equations}

\author{Pierre \'ETOR\'E \and Clémentine PRIEUR \and Dang Khoi PHAM \and Long LI}

\institute{P.  \'Etoré\at
              Univ. Grenoble Alpes, CNRS, Inria, Grenoble INP*, LJK, F-38000 Grenoble, France\\
*Institute of Engineering Univ. Grenoble Alpes\\
              \email{pierre.etore@univ-grenoble-alpes.fr}           %  \\
%             \emph{Present address:} of F. Author  %  if needed
           \and
              C. Prieur \at
              Univ. Grenoble Alpes, CNRS, Inria, Grenoble INP*, LJK, F-38000 Grenoble, France\\
*Institute of Engineering Univ. Grenoble Alpes\\
              \email{clementine.prieur@univ-grenoble-alpes.fr}
              \and
              D.K. Pham \at
              Univ. Grenoble Alpes, CNRS, Inria, Grenoble INP*, LJK, F-38000 Grenoble, France\\
*Institute of Engineering Univ. Grenoble Alpes
              \and
              L. Li \at
              Univ. Grenoble Alpes, CNRS, Inria, Grenoble INP*, LJK, F-38000 Grenoble, France\\
*Institute of Engineering Univ. Grenoble Alpes
}
%\address{Universit\'e Grenoble Alpes, CNRS, LJK, F-38000 Grenoble, France}
%\address{Inria project/team AIRSEA} 

% \address{Universit\'e Grenoble Alpes, CNRS, LJK, F-38000 Grenoble, France}
%\address{Team DATA}

% \address{Universit\'e Grenoble Alpes, CNRS, LJK, F-38000 Grenoble, France}
%\address{Inria project/team AIRSEA}

%\address{Universit\'e Grenoble Alpes, CNRS, LJK, F-38000 Grenoble, France}
%\address{Inria project/team AIRSEA}

\maketitle

\begin{abstract}
Many mathematical models involve input parameters, which are not precisely known. Global sensitivity analysis aims to identify the parameters whose uncertainty has the largest impact on the variability of a quantity of interest. One of the statistical tools used to quantify the influence of each input variable on the quantity of interest are the Sobol' sensitivity indices. In this paper, we consider stochastic models described by stochastic differential equations (SDE). We focus the study on mean quantities, defined as the expectation with respect to the Wiener measure of a quantity of interest related to the solution of the SDE itself. Our approach is based on a Feynman-Kac representation of the quantity of interest, from which we get a parametrized partial differential equation (PDE) representation of our initial problem. We then handle the uncertainty on the parametrized PDE using polynomial chaos expansion and a stochastic Galerkin projection.

\keywords{ Stochastic Differential Equations\and Sobol' indices \and Feynman-Kac representation \and Polynomial Chaos Expansion \and Polynomial Chaos Expansion \and Stochastic Galerkin Projection}
\end{abstract}

\section{Introduction}
Many mathematical models (or numerical simulators) encountered in applied sciences involve numerous poorly-known input parameters. Moreover, stochastic models are often necessary for a realistic description of the physical phenomena. In deterministic models, the model response (output) is fully determined by the values of the input parameters. A specificity of nondeterministic simulators is that the same set of parameter values will lead to an ensemble of different model responses. It is critical for practitioners to evaluate the impact of the parameter lack of knowledge onto the model response. This is the aim of sensitivity analysis. We consider in the present work the framework of global sensitivity analysis (GSA). 

In the framework of GSA for deterministic models \citep[see, e.g.,][and references therein]{saltelli2000sensitivity}, the uncertain vector of input parameters is modeled by some random vector, whose probability distribution accounts for the practitioner's belief about the input parameters uncertainty. This turns the model response into a random response, whose total variance can be split down into different parts of variance under the assumption that the input parameters are independent \citep[this is the so-called Hoeffding decomposition, see][]{van2000asymptotics}. Each part of variance corresponds to the contribution of each set of input parameters on the variance of the model response. By considering the ratio of each part of variance to the variance of the model response, we obtain a measure of importance for each set of input parameters that is called the Sobol' index or sensitivity index \cite{sobol1993sensitivity}; the most influential set of input parameters can then be identified and ranked as the set of input parameters with the largest Sobol' indices. 

Considering a similar stochastic formalism for modeling the uncertainty on the parameters of a nondeterministic model, we obtain a random response whose randomness has two sources: the randomness from the parameters, and the one due to the stochasticity inherent to the model itself. Then there are two different settings: either one is interested in the full probability distribution of the output, or we are only concerned with the output averaged over the inherent randomness of the physical system. Note that in the following, we consider that the stochasticity of the simulator can be modeled by an additional parameter, independent of the initial uncertain parameters, also considered as a random seed variable, which can be settled by the user. In other more complex frameworks, the randomness of the process is uncontrollable in the sense that it is managed by the simulator itself, and classical sensitivity analysis tools, like Monte Carlo algorithms or metamodeling, cannot be used.

In case we are only interested in the mean value relative to the intrinsic randomness of the model, a quite common procedure consists in replacing the mean quantity by the empirical mean, and then in performing usual GSA. 

If one is interested in the full probability distribution of the output, both papers \cite{iooss2009global} and \cite{marrel2012global} propose a strategy based on a joint modeling of the mean and dispersion of stochastic model outputs. From these joint models, it is possible to compute, for each uncertain parameter, the first-order Sobol' index and any Sobol' index associated to an interaction with a subset of uncertain parameters.
It is also possible to compute a total Sobol' index which contains the part of the output variance explained by the intrinsic noise of the model, by itself or in interaction with the uncertain parameters. However, such an approach does not allow to separate the effects enclosed in this total index.
In \cite{hart2017efficient}, the authors propose another point of view. They define Sobol' indices as random variables and study their statistical properties.

In the present paper, we focus our analysis on stochastic models described by stochastic differential equations (SDE). Such equations are frequently used for the simulation of complex systems, such as chemical kinetics \citep[see, e.g.,][and related works]{gillespie2003improved} or ocean models relying on stochastic parametrizations of unresolved scales \citep[see, e.g.,][and references therein]{cooper2017optimisation}, to cite only a few. 
Global sensitivity analysis for parametrized SDEs has been considered in \cite{le2015pc,jimenez2017nonintrusive}. In these two papers, the authors assume that the Wiener noise and the uncertain parameters in the parametrized SDE are independent. We will also consider this assumption. They are then interested in the full probability distribution of the solution of the parametrized SDE. Their study relies on polynomial chaos (PC) expansion \cite{wiener1938homogeneous,cameron1947orthogonal,ghanem1991stochastic,le2010spectral} to represent the uncertain parameters, leading through a Galerkin formalism to a hierarchy of stochastic differential equations governing the evolution of the modes in the PC expansion of the overall solution. The main advantage of the PC expansion of their uncertain stochastic process is that it allows a readable expression of the conditional expectations and variances.

Contrarily to the setting of both aforementioned papers, we are not considering the full probability distribution of the solution. We are rather considering mean quantities, such as the expectation with respect to the Wiener measure of a quantity of interest related to the solution itself. We focus the study on two quantities: the first one is the expectation of the exit time $\tau_D$ from a regular bounded domain $D \subset \mathbb{R}^d$, the second one is the expectation of a functional of the solution at a time $t$ on the event $t \leq \tau_D$. In this framework, we introduce a new methodology. We first use a Feynman-Kac representation of the quantity of interest \citep[see, e.g.,][]{kara,gobet-livre-num}. This leads to a parametrized partial differential equation (PDE) representation of our problem. We then handle the uncertainty on the parametrized PDE using polynomial chaos expansion and a stochastic Galerkin projection. The use of PC expansion for sensitivity analysis, or more generally uncertainty quantification, of parametrized PDE is common as it leads to analytical representations of the conditional expectations and variances, as already mentioned. 

Our paper not only proposes a practical implementation of our new methodology. It also studies its theoretical justification.
It is organised as follows: in Section~\ref{sec:notation} we fix some notation, state our problem and give some first results on the integrability of our quantities of interest. We also give a brief overview of existing methods for addressing our problem (Subsection \ref{ssec:state-art}).
In Section \ref{sec:notre-meth} we present our methodology based on Feynman-Kac formulae. Both theoretical and numerical issues are discussed. Section \ref{sec:OU} presents some numerical experiments on a toy model. Section \ref{sec:app} is an appendix gathering the proofs of several technical results, mostly related to stochastic partial differential equations.

\section{Notation, problem statement and first results}\label{sec:notation}

\subsection{First notation and assumptions}
\label{ssec:notations}

Let $\Xi\subset\R^m$, equipped with the Borel $\sigma$-field $\cB_\Xi$. Let  $\P_\xi$ be a probability measure on $(\Xi,\cB_\Xi)$. Note that by construction the random variable $\xi(z)=z$, $z\in\Xi$, defined on $(\Xi,\cB_\Xi,\P_\xi)$ has law $\P_\xi$. In the sequel we will denote in short 
$L^2(\Xi,\P_\xi)=L^2(\Xi,\cB_\Xi,\P_\xi)$. We introduce the following assumption.

\vspace{0.2cm}

\noindent {\bf Assumption 1 (A1).} Assume that the components of $\xi=(\xi_1, \ldots , \xi_m)^T$ are independent so that $\Xi$ and $\mathbb{P}_{\xi}$ have product structures
$$\Xi=\prod_{j=1}^m \Xi_j \, , \; \mathbb{P}_\xi(dz)=\prod_{j=1}^m \mathbb{P}_{\xi_j}(d z_j) \, .$$ Assume moreover that for any $j=1, \ldots , m$, the probability measure $\mathbb{P}_{\xi_j}$ is characterized by its moments. One sufficient condition for this last assertion is that the moment generating function of $\xi_j$ has positive radius of convergence. Assumption (A1) ensures the existence of an orthonormal polynomial basis of~$L^2(\Xi,\P_\xi)$.

\vspace{0.2cm}

The random variable $\xi$ will represent the uncertain parameter in the SDEs we will consider.

\vspace{0.3cm}
Now let $(C,\cC,\W)$ denote the usual $d$-dimensional Wiener space. That is to say $C$ is the space of continuous functions from $[0,\infty)$ to $\R^d$,
equipped with the $\sigma$-field $\cC$ endowed by the open sets for the metric defined in Equation $(2.4.1)$ p60 of \cite{kara}. We recall that the Wiener measure $\W$ is such that the canonical process $W(\theta)=\theta$, $\theta\in C$, is a $d$-Brownian motion on $(C,\cC,\W)$.

\vspace{0.2cm}
We now set $\Omega=C\times \Xi$, $\cF=\cC\otimes\cB_\Xi$ and $\P=\W\otimes\P_\xi$. A natural consequence of the product structure of the probability space 
$(\Omega,\cF,\P)$ is Assumption 2 below:

\vspace{0.2cm}
\noindent{\bf Assumption 2 (A2).} On $(\Omega,\cF,\P)$ the random variables $W$ and $\xi$ are independent.

\vspace{0.3cm}
Consider now $b:\R^d\times\Xi\to\R^d$ and $\sigma:\R^d\times\Xi\to\R^{d\times d}$. Denoting $|\cdot|$ both the euclidean norm on $\R^d$ and on $\R^{d\times d}$ (i.e. $|a|^2=\sum_{k,j=1}^da^2_{kj}$ for any $a\in\R^{d\times d}$) we introduce the following assumptions.

\vspace{0.2cm}
\noindent {\bf Assumption 3 (A3).} There exists a constant $0 \leq M < \infty$ such that for all $x,y \in \mathbb{R}^d $ and all $z \in \Xi$,
$$
|b(x,z)-b(y,z)|+|\sigma(x,z)-\sigma(y,z)|\leq M|x-y|$$
and
\begin{equation}
\label{eq:growth}
|b(x,z)|^2+|\sigma(x,z)|^2\leq M^2(1+|x|^2).
\end{equation}

\vspace{0.4cm}
We now consider the SDE with uncertain parameter $\xi$,
\begin{equation}
\label{eq:eds-uq}
dX_t=b(X_t,\xi)dt + \sigma(X_t,\xi)dW_t,\quad t\geq 0,\quad X_0=x.
\end{equation}

Thanks to (A3), and using Theorem 5.2.9 in \cite{kara}, we can claim that for every value of $\xi$ the SDE \eqref{eq:eds-uq} has a unique strong solution $X$. That is to say there exists $X=(X_t)_{t\geq 0}$ defined on $(\Omega,\cF,\P)$
 such that for a.e. $\omega=(\theta,z)\in\Omega=C\times\Xi$, 
we have: $\forall t\geq 0$, $\forall 1\leq i\leq d$,
$$
X^i_t(\omega)=x^i+\int_0^tb_i(X_s(\omega),\xi(z))\,ds+\sum_{j=1}^d\sigma_{ij}(X_s(\omega),\xi(z))\,dW^j_s(\theta).$$
In fact, under (A3), such a process has all its moments as we state in the following lemma, whose proof is postponed to the Appendix.

\begin{lemma}
\label{lem:X-L2}
Assume (A2) and (A3). Let $X$ be the solution of \eqref{eq:eds-uq} and $q\in\N^*$. For any $t\geq 0$ we have $\E|X_t|^q<\infty$ (in other words $X_t\in L^q(\Omega,\P)$).
\end{lemma}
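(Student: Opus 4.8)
The plan is to establish the moment bound by a standard Gronwall argument applied to the function $t \mapsto \E|X_t|^q$, treating the uncertain parameter $\xi$ as simply being "frozen" inside expectations since, by (A2), we may condition on $\xi = z$ and integrate against $\P_\xi$ at the end. First I would reduce to even exponents: it suffices to prove the claim for $q = 2p$ with $p \in \N^*$, since for general $q$ we have $\E|X_t|^q \leq 1 + \E|X_t|^{2\lceil q/2 \rceil}$ by Jensen (or simply $|a|^q \leq 1 + |a|^{2\lceil q/2\rceil}$). So fix $p \geq 1$ and work with $f(t) = \E|X_t|^{2p}$.

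Next I would apply It\^o's formula to $|X_t|^{2p}$ (or, to be safe about a priori integrability, to $|X_{t\wedge T_n}|^{2p}$ where $T_n = \inf\{t : |X_t| \geq n\}$ is a localizing sequence, then let $n \to \infty$ via Fatou at the end). This produces a drift term involving $\langle X_s, b(X_s,\xi)\rangle$ and $|\sigma(X_s,\xi)|^2$, plus lower-order terms in $|X_s|$, and a local-martingale term whose expectation vanishes (after localization). Using the linear growth bound \eqref{eq:growth} from (A3), namely $|b(x,z)|^2 + |\sigma(x,z)|^2 \leq M^2(1+|x|^2)$, together with Cauchy--Schwarz to handle $\langle X_s, b\rangle \leq |X_s|\,|b(X_s,\xi)| \leq \tfrac12(|X_s|^2 + |b(X_s,\xi)|^2)$, every term in the drift is bounded by a constant (depending on $p$, $d$, $M$) times $1 + |X_s|^{2p}$. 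Taking expectations and using Tonelli to interchange $\E$ and $\int_0^t$ gives
$$
f(t \wedge T_n) \leq |x|^{2p} + C\int_0^t \bigl(1 + f(s \wedge T_n)\bigr)\,ds
$$
for a finite constant $C = C(p,d,M)$. Since $f(\cdot \wedge T_n)$ is bounded by $n^{2p}$, Gronwall's lemma applies and yields $f(t\wedge T_n) \leq (|x|^{2p} + Ct)e^{Ct}$, a bound uniform in $n$; letting $n \to \infty$ and invoking Fatou gives $\E|X_t|^{2p} < \infty$, in fact with the explicit bound $\E|X_t|^{2p} \leq (|x|^{2p} + Ct)e^{Ct}$.

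The only genuinely delicate point is the a priori integrability needed to assert that the stochastic integral term is a true martingale with zero expectation; this is precisely what the localization by $T_n$ circumvents, and the subsequent passage to the limit is routine by monotone convergence (for $T_n \uparrow \infty$, which holds since $X$ has continuous paths and is a.s. finite for each $t$ by the strong existence statement) and Fatou. I do not expect the dependence on $\xi$ to cause any trouble: all the bounds above hold pathwise in $z \in \Xi$ with the same constant $M$ by (A3), so one can either carry $\xi$ along inside the expectation throughout, or first prove the bound for each fixed $z$ and then integrate the uniform bound $(|x|^{2p}+Ct)e^{Ct}$ against $\P_\xi$ using (A2). The constant $C$ does not depend on $z$, so no integrability assumption on $\xi$ is required here.
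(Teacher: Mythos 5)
Your proposal is correct and follows essentially the same route as the paper: the paper simply cites Problem 5.2.15 of Karatzas--Shreve for the bound $\int_C|X_t(\theta,z)|^q\,\W(d\theta)\leq C(1+|x|^q)e^{Ct}$ with $C=C(t,M)$ uniform in $z$, and then integrates against $\P_\xi$. Your It\^o--localization--Gronwall argument is precisely the standard proof of that cited estimate, and your observation that the constants depend on $z$ only through the uniform constant $M$ of (A3) is exactly the point the paper makes before integrating over $\Xi$.
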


\begin{remark}
Assumption 3, in order to get the existence of the solution $X$ and its square integrability, could be weakened. But it provides a reasonable setting to study the partial differential equation associated to our original problem (see Section~\ref{sec:notre-meth}).
\end{remark}

\begin{remark}
\label{rem:esp-xi}
Let $F:C\to \R$ measurable such that $F(X)$ belongs to $L^1(\Omega,\P)$. Note that thanks to (A2),
$$
\E[\,F(X)\,|\,\xi\,]=\int_CF(X(\theta,\xi))\W(d\theta),$$
that is to say $\E[\,\cdot\,|\,\xi\,]$ can be seen as the average with respect to the Brownian motion $W$ driving \eqref{eq:eds-uq}.
\end{remark}

\subsection{Problem statement}

As already mentioned in the introduction, the $d$-Brownian motion in our study models the physical system randomness. The quantities we are interested in are then mean values, with respect to the $d$-Brownian motion, of outputs related to the SDE. These quantities depend on uncertain parameters, and we aim at determining which parameters are influential on these quantities, by performing a global sensitivity analysis. This section aims at defining properly two quantities we are interested in (see Equations \eqref{firstquantity} and \eqref{eq:def-quant-int2}). 

\medskip

We first need to introduce a set of assumptions and notation, as far as preliminary results.

Let $\sigma$ denote the diffusion term in Equation \eqref{eq:eds-uq}, we then define $a=\sigma\sigma^T$ and introduce the following uniform ellipticity assumption:

\vspace{0.2cm}
\noindent {\bf Assumption 4 (A4).} There exists $\lambda>0$ such that
$$
\forall x,y\in\R^d,\;\forall z\in\Xi,\quad y^Ta(x,z)y\geq \lambda |y|^2.$$

\vspace{0.4cm}
Let $D$ be an open bounded subset of $\R^d$. Recall that $X$ is solution of \eqref{eq:eds-uq}. We define the first exit time of the process $X$ from $D$ as:
$$
\tau_D=\inf\{ t\geq 0: \,X_t\notin D\}.$$

We have the following result (Lemma \ref{lem:tau-L1}) whose proof is postponed to the Appendix.

\begin{lemma}
\label{lem:tau-L1}
Assume (A2)-(A4). For any starting point $x\in D$ of \eqref{eq:eds-uq} we have $\tau_D\in L^1(\Omega,\P)$.
\end{lemma}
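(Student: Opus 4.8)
The plan is to show that the expected exit time is finite by exhibiting, for each fixed value $z\in\Xi$ of the parameter, an explicit bound on $\E[\tau_D\mid\xi=z]$ that is uniform in $z$, and then to integrate over $\Xi$ with respect to $\P_\xi$. Fix $z\in\Xi$ and work on the Wiener space $(C,\cC,\W)$ with $X=X(\cdot,z)$ the strong solution of \eqref{eq:eds-uq} for that value of the parameter; by Remark~\ref{rem:esp-xi} we have $\E[\tau_D\mid\xi]=\int_C\tau_D(\theta,\xi)\,\W(d\theta)$, so it suffices to bound this quantity by a constant not depending on $z$ and then note $\E\tau_D=\int_\Xi\E[\tau_D\mid\xi=z]\,\P_\xi(dz)$ is finite (indeed bounded).

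First I would use the classical ellipticity argument. Since $D$ is bounded, pick $R>0$ with $D\subset\{|x|<R\}$ and set, for a parameter $\mu>0$ to be chosen, the auxiliary function $h(x)=e^{2\mu R}-e^{\mu x_1}$ (or $h(x)=A-e^{\mu x_1}$ with $A$ large enough that $h\geq 1$ on $\bar D$, say). Applying Itô's formula to $h(X_{t\wedge\tau_D})$ and writing $L^z$ for the generator $L^z f=\tfrac12\sum_{i,j}a_{ij}(\cdot,z)\partial_{ij}f+\sum_i b_i(\cdot,z)\partial_i f$, one computes $L^z h(x)=-e^{\mu x_1}\bigl(\tfrac12\mu^2 a_{11}(x,z)+\mu b_1(x,z)\bigr)$. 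On $\bar D$ we have $a_{11}(x,z)\geq\lambda$ by (A4) and $|b_1(x,z)|\leq M\sqrt{1+R^2}=:K$ by (A3), so choosing $\mu$ large enough (depending only on $\lambda$, $K$, hence only on $M$, $R$, $\lambda$, not on $z$) gives $\tfrac12\mu^2\lambda-\mu K\geq 1$, whence $L^z h(x)\leq -e^{\mu x_1}\leq -e^{-\mu R}=:-c<0$ for all $x\in\bar D$ and all $z\in\Xi$. Taking expectations in the Itô formula, the stochastic integral term is a true martingale (its integrand is bounded on $\{t\le\tau_D\}$ since $X$ stays in the bounded set $\bar D$ and $\sigma$, $\nabla h$ are continuous), so $\E h(X_{t\wedge\tau_D})-h(x)=\E\int_0^{t\wedge\tau_D}L^z h(X_s)\,ds\leq -c\,\E[t\wedge\tau_D]$. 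Since $0\le h\le A$ on $\bar D$, this yields $\E[t\wedge\tau_D]\leq A/c$, and letting $t\to\infty$ by monotone convergence gives $\int_C\tau_D\,\W(d\theta)=\E[\tau_D\mid\xi=z]\leq A/c$, a bound independent of $z$.

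Finally, integrating this uniform bound over $(\Xi,\cB_\Xi,\P_\xi)$ gives $\E\tau_D=\int_\Xi\E[\tau_D\mid\xi=z]\,\P_\xi(dz)\le A/c<\infty$, i.e. $\tau_D\in L^1(\Omega,\P)$. The only genuinely delicate point is the justification that the stochastic integral in Itô's formula has zero expectation after stopping: I would handle it by first stopping at $\tau_D\wedge t\wedge S_n$ where $S_n$ is a localizing sequence, using that on the stopped interval $X$ is confined to $\bar D$ so that $\sigma(X_s,z)^T\nabla h(X_s)$ is bounded by a deterministic constant, and then removing the localization by dominated convergence — a routine step, but the one that needs care. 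Everything else reduces to the standard Lyapunov/ellipticity computation above, whose constants are manifestly uniform in $z$ thanks to (A3) and (A4).
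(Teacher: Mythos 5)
Your proof is correct and follows essentially the same route as the paper's: the classical Lyapunov/ellipticity argument of Karatzas--Shreve (Lemma 5.7.4) with the auxiliary function built from $e^{\mu x_1}$, constants made uniform in $z$ via (A3)--(A4), It\^o's formula plus vanishing of the stochastic integral, and finally integration over $\Xi$ with monotone convergence. The only cosmetic difference is your choice $h(x)=A-e^{\mu x_1}$ versus the paper's $h(x)=-\mu e^{\nu x_1}$, which changes nothing in substance.
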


\vspace{0.2cm}

In the sequel the starting point $x\in\R^d$ of $X$ solution of \eqref{eq:eds-uq} may vary. We classically denote by $\E^x(\cdot|\xi)$ the expectation with respect to $W$ computed under the initial condition $X_0=x$.

\medskip

The first quantity we are interested in is the averaged (with respect to $W$) exit time of the process $X$ from the domain $D$ (we assume $x\in \bar{D}$, the closure of $D$). It is defined as:

\begin{equation}\label{firstquantity}
\gU(x,\xi)=\E^x[\,\tau_D\,|\,\xi\,].
\end{equation}

\medskip

To ensure that $\gU(x,\xi)$ is in $L^2(\Xi,\mathbb{P}_{\xi})$, we add a regularity assumption on the domain $D$:

\vspace{0.2cm}
\noindent {\bf Assumption 5 (A5).} The boundary $\partial D$ is of class $C^{2,\alpha}$, $\alpha=1$ \citep[see][p94 for a definition]{trudinger}.

\medskip

Then, it is possible to prove the following lemma:

\begin{lemma}
\label{coro:gU-L2}
Assume (A2)-(A5). For any $x\in D$ and almost every value of $\xi$ we have $|\gU(x,\xi)|\leq C$ where $C$ is a finite constant. 
\end{lemma}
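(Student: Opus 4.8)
\emph{Proof strategy.} The plan is to bound $\gU(x,z)=\E^x[\tau_D\,|\,\xi=z]$, uniformly in $z\in\Xi$, by comparing it with an explicit smooth supersolution (a \emph{barrier}) for the generator $L_z=\tfrac12\sum_{i,j}a_{ij}(\cdot,z)\partial_{x_i}\partial_{x_j}+\sum_i b_i(\cdot,z)\partial_{x_i}$ of \eqref{eq:eds-uq}. The crucial observation is that such a barrier can be built out of the two \emph{uniform} constants appearing in the hypotheses, namely the ellipticity constant $\lambda$ of (A4) and the growth constant $M$ of (A3), so that it will not depend on $z$. I would run the comparison probabilistically via It\^o's/Dynkin's formula; equivalently, once $\gU(\cdot,z)$ is identified with the classical solution in $C^{2,\alpha}(\bar D)$ of the Dirichlet problem $L_z u=-1$ in $D$, $u=0$ on $\partial D$ (whose solvability, under (A3)--(A5), is where (A5) actually enters), one may invoke the elliptic maximum principle instead.

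Concretely, first I would use the boundedness of $D$ to fix $R>0$ with $\bar D\subset\{x\in\R^d:\ |x_1|\le R\}$ and $\bar D\subset\{|x|\le R\}$, and record that, for all $x\in\bar D$ and all $z\in\Xi$, (A3) gives $|b_1(x,z)|\le|b(x,z)|\le M\sqrt{1+R^2}=:M_D$ while (A4) gives $a_{11}(x,z)\ge\lambda$. Then I would take
\[
w(x)=e^{\gamma R}\bigl(e^{\gamma R}-e^{\gamma x_1}\bigr),
\]
with $\gamma>0$ chosen (depending only on $\lambda,M_D$) so large that $\tfrac12\lambda\gamma^2-M_D\gamma\ge1$. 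A one-line computation then yields, for every $x\in\bar D$ and $z\in\Xi$,
\[
L_z w(x)=-e^{\gamma R}e^{\gamma x_1}\Bigl(\tfrac12 a_{11}(x,z)\gamma^2+b_1(x,z)\gamma\Bigr)\ \le\ -e^{\gamma R}e^{\gamma x_1}\ \le\ -1,
\]
the last step using $x_1\ge-R$ on $\bar D$, while $0\le w\le e^{2\gamma R}=:C$ on $\bar D$, with both $\gamma$ and $C$ independent of $z$.

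Finally, for a fixed $x\in D$ and $z\in\Xi$, I would apply It\^o's formula to $w(X_{t\wedge\tau_D})$ and take $\E^x[\,\cdot\,|\,\xi=z]$ (which, by Remark~\ref{rem:esp-xi}, is the expectation for \eqref{eq:eds-uq} with the parameter frozen at $z$): since $X_s\in\bar D$ for all $s\le\tau_D$ by path continuity, the stochastic integral has a bounded integrand, hence vanishes in expectation, and using $L_z w\le-1$ on $\bar D$ one obtains
\[
0\ \le\ \E^x\bigl[w(X_{t\wedge\tau_D})\,\big|\,\xi=z\bigr]\ \le\ w(x)-\E^x\bigl[t\wedge\tau_D\,\big|\,\xi=z\bigr],
\]
so that $\E^x[t\wedge\tau_D\,|\,\xi=z]\le w(x)\le C$ for every $t\ge0$. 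Letting $t\to\infty$, monotone convergence (with $\tau_D<\infty$ a.s., by Lemma~\ref{lem:tau-L1}) gives $0\le\gU(x,z)=\E^x[\tau_D\,|\,\xi=z]\le C$ for every $z\in\Xi$ and every $x\in D$, which is the claim; in particular $\gU(x,\cdot)\in L^2(\Xi,\P_\xi)$. I do not expect a real obstacle here: the only delicate points are bookkeeping, namely making sure every constant entering $w$ is controlled by $\lambda$, $M$ and $D$ alone — which is exactly what the uniformity built into (A3)--(A4) buys and is what forces $C$ to be $z$-free — and checking that the stopped stochastic integral is a genuine martingale, which is immediate since $X$ stays in the compact set $\bar D$ up to $\tau_D$ and $\nabla w,\sigma$ are continuous. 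If one prefers the PDE route, the same $w$ serves as a supersolution to which the comparison principle applies directly, and that is the version dovetailing with the PDE formulation used later in the paper.
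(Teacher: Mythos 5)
Your proof is correct, and the bound it produces is exactly of the right form, but you reach it by a genuinely different (and more self-contained) route than the paper. The paper first identifies $\gU(\cdot,\xi)$ with the classical $C^2(\bar D)$ solution of the elliptic Dirichlet problem via Theorem~\ref{thm:feyn-U} — this is where (A5) is genuinely used — and then invokes the a priori bound of Theorem 3.7 in \cite{trudinger}, obtaining $\sup_{x\in\bar D}|\gU(x,\xi)|\le(e^{\alpha(\xi)\delta}-1)/\lambda$ and observing, exactly as you do, that the uniformity in $z$ of the constants in (A3)--(A4) lets one choose $\alpha(\xi)$, hence the bound, independent of $\xi$. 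You instead run the comparison probabilistically: you build the exponential barrier $w$ by hand (which is, in substance, the same barrier hiding inside the Gilbarg--Trudinger maximum principle), verify $L_zw\le-1$ on $\bar D$ uniformly in $z$, and conclude via It\^o/Dynkin and monotone convergence. Your route buys two things: it does not need (A5) nor the classical solvability of \eqref{eq:Pb-ell}, so the lemma holds under (A2)--(A4) alone; and it makes transparent that the paper's own proof of Lemma~\ref{lem:tau-L1} already yields the uniform bound (the estimate $\int_C(t\wedge\tau_D)\,\W(d\theta)\le 2\max_{\bar D}|h|$ there is uniform in $z$, and letting $t\to\infty$ gives precisely your conclusion). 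What the paper's route buys, as you note yourself, is consistency with the PDE framework exploited in Section~\ref{sec:notre-meth}, where the identification of $\gU$ as the solution of \eqref{eq:Pb-ell} is needed anyway. All the technical points you flag (boundedness of the stopped stochastic integral's integrand on the compact $\bar D$, the interpretation of $\E^x[\,\cdot\,|\,\xi=z]$ via Remark~\ref{rem:esp-xi}) are handled correctly.
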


\begin{proof}
Note that Lemma \ref{coro:gU-L2} is a corollary of Theorem~\ref{thm:feyn-U} stated in Section \ref{sec:notre-meth}. Its proof is postponed to the Appendix. 
%The result will follow from the forthcoming Theorem~\ref{thm:feyn-U}, which gives an interpretation of 
%$\gU(x,\xi)$ in terms of an elliptic stochastic PDE, and the maximum principle.
\end{proof}

\begin{remark}
Even if the result of Theorem~\ref{thm:feyn-U} is required for the proof of Lemma~\ref{coro:gU-L2}, we decided to postpone its statement to Section \ref{sec:notre-meth}, devoted to the introduction of our new methodology, based on Feynman-Kac representations of the mean quantities we are interested in. Theorem~\ref{thm:feyn-U} provides indeed a clear interpretation of $\gU(x,\xi)$ as the solution of an elliptic stochastic partial differential equation.
\end{remark}

\medskip

Let now $0\leq  t<\infty$. Let $f$ be a function from $\bar{D}$ to $\R$. We introduce the following set of assumptions on $f$:

\vspace{0.2cm}
\noindent {\bf Assumption 6 (A6).} The function $f$ is of class $C^2(\bar{D})$ and satisfies the compatibility condition
\begin{equation}
\label{eq:compatibility}
 f\equiv 0
\quad\text{and}\quad \frac 1 2\sum_{k,j}^da_{kj}\partial^2_{x_kx_j}f+\sum_{j=1}^db_j\partial_{x_j}f\equiv 0
\text{ on }\partial D.
\end{equation}
Besides, for all $x,y\in \bar{D}$,
$$
|f(x)-f(y)|+\sum_{k=1}^d|\partial_{x_k}f(x)-\partial_{x_k}f(y)|+\sum_{k,j=1}^d|\partial^2_{x_jx_k}f(x)-\partial^2_{x_jx_k}f(y)|\leq M|x-y|
$$
and 
\begin{equation}
\label{eq:controle-f}
|f(x)|\leq M(1+|x|^\mu),
\end{equation}
with  constants $0\leq M,\mu<\infty$.

\vspace{0.3cm}
From Lemma \ref{lem:X-L2} we immediately get Lemma \ref{lem:fX-Lq} just below.
%, asserting in particular that under (A6) the quantity 
%$f(X_t)$ is square-integrable with respect to $\P$ for any $t$.
% (this is because of \eqref{eq:controle-f}).

\begin{lemma}
\label{lem:fX-Lq}
Assume (A2)-(A3) and \eqref{eq:controle-f}. Then for any $q\in\N^*$ we have $f(X_t)\in L^q(\Omega,\P)$, for any~$t\geq 0$.
\end{lemma}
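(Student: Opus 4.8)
The plan is to deduce Lemma~\ref{lem:fX-Lq} directly from Lemma~\ref{lem:X-L2} via the polynomial growth bound \eqref{eq:controle-f}. First I would fix $t\geq 0$ and $q\in\N^*$. By \eqref{eq:controle-f} there are constants $0\leq M,\mu<\infty$ with $|f(x)|\leq M(1+|x|^\mu)$ for all $x\in\bar D$. Since $X_t$ takes values in $\R^d$ (not necessarily in $\bar D$, but \eqref{eq:controle-f} is really needed only there; in fact one can either assume $f$ extended with the same growth control to all of $\R^d$, or simply note $f(X_t)$ is only evaluated where $X_t\in\bar D$, e.g.\ on $\{t\le\tau_D\}$, so the bound applies pointwise), we get the pointwise inequality
\begin{equation*}
|f(X_t)|^q\leq M^q\bigl(1+|X_t|^\mu\bigr)^q\leq M^q 2^{q-1}\bigl(1+|X_t|^{\mu q}\bigr),
\end{equation*}
using the elementary convexity bound $(a+b)^q\leq 2^{q-1}(a^q+b^q)$ for $a,b\geq 0$.

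Next I would take expectations under $\P$ on both sides. This gives
\begin{equation*}
\E\,|f(X_t)|^q\leq M^q 2^{q-1}\bigl(1+\E\,|X_t|^{\mu q}\bigr).
\end{equation*}
Now $\mu q$ need not be an integer, so I would invoke Jensen's (or Lyapunov's) inequality: setting $p=\lceil \mu q\rceil\in\N^*$, we have $\E\,|X_t|^{\mu q}\leq 1+\E\,|X_t|^{p}$ (splitting according to whether $|X_t|\le 1$ or not), and by Lemma~\ref{lem:X-L2} applied with exponent $p$ we know $\E\,|X_t|^{p}<\infty$. Hence the right-hand side is finite, which is exactly the statement $f(X_t)\in L^q(\Omega,\P)$.

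There is essentially no hard part here — the lemma is a routine corollary, which is precisely why the excerpt says it follows ``immediately'' from Lemma~\ref{lem:X-L2}. The only point requiring a word of care is the domain of validity of \eqref{eq:controle-f}: as stated it concerns $f:\bar D\to\R$, so strictly speaking $f(X_t)$ is only well defined (and the bound only applicable) on the event $\{X_t\in\bar D\}$, or after extending $f$ to $\R^d$ while preserving the polynomial growth. In the intended use (the quantity \eqref{eq:def-quant-int2}, which involves $f(X_t)$ on $\{t\le\tau_D\}$) this causes no difficulty, and one can simply read the conclusion as: the relevant integrand is in $L^q$. With that understood, the chain ``polynomial growth $+$ all moments of $X_t$ $+$ Lyapunov's inequality'' closes the proof.
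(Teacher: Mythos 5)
Your proof is correct and follows exactly the route the paper intends: the paper gives no explicit argument, stating only that the lemma follows ``immediately'' from Lemma~\ref{lem:X-L2}, and your chain (polynomial growth bound \eqref{eq:controle-f}, the convexity inequality $(a+b)^q\leq 2^{q-1}(a^q+b^q)$, reduction of the non-integer moment $\mu q$ to an integer one, then Lemma~\ref{lem:X-L2}) is precisely the intended instantiation. Your remark about \eqref{eq:controle-f} being stated only on $\bar D$ is a legitimate point of care that the paper glosses over, and your resolution (the bound is only ever needed where $X_t\in\bar D$, as in \eqref{eq:def-quant-int2}) is the right one.
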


It is then possible to define the second quantity we are interested in. The quantity $\gV(t,x,\xi)$  is defined as the expectation (with respect to $W$) of $f(X_t)$, computed on the event $\{t\leq \tau_D\}$:

\begin{equation}
\label{eq:def-quant-int2}
\gV(t,x,\xi)=\E^x[\,f(X_t)\1_{t\leq \tau_D}\,|\,\xi\,].
\end{equation}

\medskip

\begin{remark}
Lemma~\ref{lem:fX-Lq} ensures that, under (A2)-(A3) and (A6), the quantity $\gV(t,x,\xi)$ is in $L^2(\Xi,\mathbb{P}_{\xi})$. 
\end{remark}

In the sequel, in order to lighten notation, we will sometimes drop any reference to $x$ or $t$ in the quantities of interest 
$\gU$ or $\gV$.

\medskip

Recall that we aim at determining which parameters, or set of parameters, are influential for $\gU$, {\it resp.} $\gV$. We propose to compute Sobol' sensitivity indices, with respect to the components of~$\xi=(\xi_1,\ldots,\xi_m)^T$ assumed to be independent. These indices are defined \citep[see, e.g.,][]{sobol1993sensitivity} as:

\begin{equation}
\label{eq:sobol-U}
S_I(\gU)=\frac{\V[\E(\gU(\xi)|\xi_I)]}{\V(\gU(\xi))},\quad\forall I\subset \{1,\ldots,m\},\quad\xi_I=\{\xi_\ell, \,\ell\in I\}
\end{equation}
and
\begin{equation}
\label{eq:sobol-V}
S_I(\gV)=\frac{\V[\E(\gV(\xi)|\xi_I)]}{\V(\gV(\xi))},\quad\forall I\subset \{1,\ldots,m\},\quad\xi_I=\{\xi_\ell, \,\ell\in I\}\, .
\end{equation}

\begin{remark}
In the following, we assume that $x\in D$ and $t>0$ to ensure $\V(\gU(\xi))\neq 0$ and $\V(\gV(\xi))\neq 0$.
\end{remark}

\begin{remark}\label{rem:A6}
Assumption (A6) on $f$ is a technical assumption which may appear restrictive. In practice, if $f$ does not satisfy (A6), we approximate it by some $f_\varepsilon$ satisfying (A6) and propose a bound to control the approximation error. E.g., for $f \equiv 1$ on $\bar{D}$, and for any $\varepsilon >0$, it is possible via convolution arguments to construct a function $f_\varepsilon \in C^2( \bar{D})$ satisfying (A6), such that $0 \leq f_{\varepsilon} \leq 1$ and $f_\varepsilon  \equiv 1$ on $K_\varepsilon$, with $K_\varepsilon$ a compact subset of $D$ with $|D \setminus K_\varepsilon| \leq \varepsilon$ (note that the existence of $K_\varepsilon$ is ensured by Assumption~(A5) on the regularity of the boundary). Then we have:
\begin{equation}\label{approxrem}
\left| \E^x[\,f(X_t)\1_{t\leq \tau_D}\,|\,\xi\,]-\E^x[\,f_\varepsilon(X_t)\1_{t\leq \tau_D}\,|\,\xi\,]\right| \leq  \, \mathbb{P}^x \left(X_t \in D\setminus K_\varepsilon \, | \, \xi \right) \, .
\end{equation}
But, modifying $\sigma$ and $b$ outside $D$ so that they are bounded and Lipschitz on $\R^d$ (with constants uniform in $\xi$), and using Theorems 4.5 and 5.1 in \cite{friedman2}, we get the following estimate for the transition probability density of $(X_t)_{t \geq 0}$, under assumptions (A3), (A4):
\begin{equation}
\label{eq:aronson}
p(t,x,y,\xi)\leq \frac{M_T}{t^{d/2}}\exp\Big( -\frac{|y-x|^2}{M_Tt}  \Big),\quad \forall x,y \in D
\end{equation}
with $M_T$ depending on $T$, $M$, $\lambda$ and $|D|$ (but not on $\xi$). 
We thus conclude that the right hand side in~\eqref{approxrem} behaves as $O \left(\varepsilon\right)$.
\end{remark}

\subsection{State of the art}
\label{ssec:state-art}

In this section, we first recall the crude Monte Carlo procedure for the estimation of $S_I(\gU)$ and $S_I(\gV)$. We then present the approach introduced recently in \cite{jimenez2017nonintrusive,le2015pc} for the estimation of the $S_I(\gV_i)$, with $\gV_i(t,x,\xi)=\mathbb{E}^x[X_t^i|\xi]$, for $1 \leq i \leq d$.

\subsubsection{Crude Monte Carlo estimation procedure}
\label{sssec:MC}
%As purely Monte Carlo estimators will serve not only in Section \ref{sec:OU}, we aim at presenting things in full generality.

The crude Monte Carlo estimation procedure is probably the most intuitive. However, its main issue is its computational cost.
Let us detail below what we mean by crude Monte Carlo estimation procedure.

\medskip

Let $\cF_\gU:C\times\Xi\to\R$ ({\it resp.} $\cF_\gV:C\times\Xi\to\R$) be the application such that
$
\cF_\gU(W,\xi)=\tau_D$ ({\it resp.} $
\cF_\gV(W,\xi)=f(X_t)\1_{t\leq\tau_D}$).
Note that the deterministic maps $\cF_\gU$ and $\cF_\gV$ exist because for fixed $\xi$ one passes from a path of $W$ to a path of $X$ in a deterministic way, thanks to the Yamada-Watanabe causality principle \citep[see][]{kara}.

In practice our Monte Carlo procedures will involve some Euler scheme to approximate the paths of $X$, so that we will in fact work with approximations $\widehat{\cF}_\gU$ and  $\widehat{\cF}_\gV$ of $\cF_\gU$ and $\cF_\gV$ (e.g., in Section \ref{sec:OU}).

In the sequel we focus on $\gU(\xi)$, as $\gV(\xi)$ can be treated in a similar manner. The quantity $\gU(\xi)$  is first approximated by
$$
\E[\,\widehat{\cF}_\gU(W,\xi)\,|\,\xi\,].$$
Second, let $M\in \N^*$ ($M$ is supposed to be large) and $W^{(k)}$, $k=1,\ldots,M$,
independent samplings of the Brownian motion~$W$. We approximate $\E[\,\widehat{\cF}_\gU(W,\xi)\,|\,\xi\,]$ by the Monte Carlo mean
$$
%\overline{\cF}_\gU^M(\xi)=\frac 1 M \sum_{k=1}^M \widehat{\cF}_\gU(W^{(k)},\xi).$$
\overline{E}^M\big[ \widehat{\cF}_\gU\big](\xi):=\frac 1 M \sum_{k=1}^M \widehat{\cF}_\gU(W^{(k)},\xi).$$
%\overline{E}^M_{ \widehat{\cF}_\gU}(\xi)=\frac 1 M \sum_{k=1}^M \widehat{\cF}_\gU(W^{(k)},\xi).$$

We then perform a sensitivity analysis of  $\overline{E}^M\big[ \widehat{\cF}_\gU\big](\xi)$ with respect to the components of $\xi=(\xi_1,\ldots,\xi_m)^T$.
Sobol' indices can be estimated with a classical {\it pick freeze} procedure \citep[see, e.g.,][]{sobol1993sensitivity,sobol2001global}. Let $I\subset\{1,\ldots,m\}$. Let $N\in \N^*$ ($N$ is supposed to be large).  Let $\xi^{A,(l)}$,$\xi^{B,(l)}$, $l=1,\ldots,N$ independent samplings of $\xi$. We then construct samples $\xi^{I,(l)}$, $l=1,\ldots,N$ in the following manner:
$$
\forall 1\leq l\leq N, \forall 1\leq \ell \leq m, \quad \xi^{I,(l)}_\ell=\left\{
\begin{array}{ll}
\xi^{B,(l)}_\ell&\text{if } \ell\in I\\
\xi^{A,(l)}_\ell&\text{otherwise}.\\
\end{array}
\right.
$$
Then the Sobol' index $S_I(\gU)$ is approximated by
$$
S_I(\overline{E}^M\big[ \widehat{\cF}_\gU\big](\xi))=\frac{\V\big[\E\big(\overline{E}^M\big[ \widehat{\cF}_\gU\big](\xi)\,|\,\xi_I\big)\big]}{\V\big[\overline{E}^M\big[ \widehat{\cF}_\gU\big](\xi)\big]}=\frac{\V\big[\overline{E}^M\big[ \widehat{\cF}_\gU\big](\xi_I)\big]}{\V\big[\overline{E}^M\big[ \widehat{\cF}_\gU\big](\xi)\big]}$$
which itself is estimated by
\begin{equation}
\label{eq:MC}
\frac{\dfrac 1 N \sum_{l=1}^N  \overline{E}^M\big[ \widehat{\cF}_\gU\big](\xi^{I,(l)}) \times \overline{E}^M\big[ \widehat{\cF}_\gU\big](\xi^{B,(l)})  -\Big(\overline{C}\Big)^2}{\dfrac 1 N \sum_{l=1}^N \dfrac{\big\{ \overline{E}^M\big[ \widehat{\cF}_\gU\big](\xi^{I,(l)}) \big\}^2+  \big\{\overline{E}^M\big[ \widehat{\cF}_\gU\big](\xi^{B,(l)}) \big\}^2}{2}
-\Big(\overline{C}\Big)^2}
\end{equation}
with 
$$\overline{C}=\dfrac 1 N \sum_{l=1}^N \dfrac{ \overline{E}^M\big[ \widehat{\cF}_\gU\big](\xi^{I,(l)})  +  \overline{E}^M\big[ \widehat{\cF}_\gU\big](\xi^{B,(l)}) }{2}\cdot$$

This estimator has been introduced first in \cite{monod2006uncertainty} and its asymptotic properties have been studied in \cite{janon2014asymptotic}. The main advantage of {\it pick freeze} estimators is that it only requires the square integrability for the model.
However, the number of model evaluations it requires can be huge. It is equal to $MN(m+1)$, with $m$ the number of uncertain parameters.

\subsubsection{Hybrid Galerkin-Monte Carlo procedure}
\label{sssec:lemaitre}

In the setting of parametrized SDE, we can exploit regularity properties of the underlying model for proposing alternatives to the crude Monte Carlo procedure. In this section, we present the procedure introduced in \cite{le2015pc,jimenez2017nonintrusive}. It is based on a polynomial chaos analysis with stochastic expansion coefficients. In that paper, the uncertainty due to the Wiener noise and the one due to the uncertain parameters $\xi$ are handled at the same level for sensitivity analysis.
Recall that the components of $\xi=(\xi_1, \ldots , \xi_m)^T$ are assumed to be independent so that $\Xi$ and $\mathbb{P}_{\xi}$ have product structures
$$\Xi=\prod_{j=1}^m \Xi_j \, , \; \mathbb{P}_\xi(dz)=\prod_{j=1}^m \mathbb{P}_{\xi_j}(d z_j) \, .$$
We endow ${L}^2(\Xi,\mathbb{P}_{\xi})$ with the inner product and associated norm denoted $<\cdot,\cdot>$ and $\| \cdot \|_2$ respectively
$$\forall \, U,V \in L^2(\Xi,\mathbb{P}_{\xi}) \, , \; <U,V>=\int_{\Xi}U(z)V(z)\mathbb{P}_{\xi}(dz) \, ;$$
$$\|U\|_2=<U,U>^{1/2}< \infty \Longleftrightarrow U \in L^2(\Xi,\mathbb{P}_{\xi}) \, .$$
Then, introducing an orthonormal basis $\{\Psi_q(\cdot)\, , \; q \in \mathbb{N}\}$ for $L^2(\Xi,\mathbb{P}_{\xi})$ (e.g., any tensorized basis), any second-order random variable $U(\xi)$ can be expanded as 
$$U(\xi)=\sum_{q \in \mathbb{N}} [U_{q}]\Psi_{q}(\xi) \, .$$
The authors in \cite{le2015pc,jimenez2017nonintrusive} consider then the following tensor representation: for a.e. $\omega=(\theta,z)\in\Omega=C\times\Xi$,

\begin{equation}
\label{eq:decomp-X}
X_t^i (\theta,z)=\sum_{q \in \mathbb{N}}[{(X_t^i)}_q](\theta)\Psi_q(z) \, , \; i=1 , \ldots , d.
\end{equation}
Then, they consider a stochastic Galerkin projection (see, e.g., \cite{ghanem1991stochastic}) to derive equations that enable the determination of the stochastic processes $[{(X_t^i)}_q](\theta)$, $i=1, \ldots , d$.
E.g., let $i \in \{1, \ldots , d\}$, we get\\\\
%$$\begin{array}{l}
\noindent $d[{(X_t^i)}_q](\theta)  =$\\
\noindent $<b(\sum_{r \in \mathbb{N}}[{(X_t^i)}_r](\theta)\Psi_r(\cdot) ,\cdot )dt,\Psi_q(\cdot)>+\sigma(\sum_{r \in \mathbb{N}}[{(X_t^i)}_r](\theta)\Psi_r(\cdot) ,\cdot )dW_t(\theta),\Psi_q(\cdot)>$.\\
%\end{array}$$

From Assumption (A2), we know that $\xi$ and $W$ are independent thus\\\\
%$$\begin{array}{l}
\noindent $d[{(X_t^i)}_q](\theta)  =$\\
\noindent $<b(\sum_{r \in \mathbb{N}}[{(X_t^i)}_r](\theta)\Psi_r(\cdot) ,\cdot ),\Psi_q(\cdot)>dt+\sigma(\sum_{r \in \mathbb{N}}[{(X_t^i)}_r](\theta)\Psi_r(\cdot) ,\cdot ),\Psi_q(\cdot)>dW_t(\theta)$.\\
%\end{array} $$

From computational purposes, the authors truncate this infinite sequence of coupled problems to an order $P$: $0 \leq q \leq P$. 
They thus get a system of $P+1$ coupled stochastic differential equations, with initial conditions obtained by projecting the initial data on the stochastic basis
$[{(X_0^i)}_q](\theta)=<X_0,\Psi_q>$. The system is then solved using standard Monte Carlo simulation, introducing a time scheme and generating trajectories for the $d$-Brownian motion.

Consider now the quantities of interest analyzed in \cite{jimenez2017nonintrusive,le2015pc}:
$$
\forall 1\leq i\leq d,\quad \gV_i(t,x,\xi)=\E^x[X^i_t\,|\,\xi],$$
denoted in short $\gV_i(\xi)$, $1\leq i\leq d$, and recall \eqref{eq:decomp-X}.

Then, due to the orthonormality of the stochastic basis $\{\Psi_q \, , \; q \in \mathbb{N}\}$, we get
\begin{equation}\label{approx:var}
\mathbb{V}(\gV_i(\xi))\approx  \mathbb{V}\{[(X_t^i)_0]\}+\sum_{q=1}^P \mathbb{E} \{ [(X_t^i)_q]^2\}
\end{equation}
and
\begin{equation}\label{approx:num}
\mathbb{V}[\mathbb{E}(\gV_i(\xi)|\xi)]\approx \sum_{q=1}^P \{ \mathbb{E}[(X_t^i)_q]\}^2
\end{equation}
for the estimation of Sobol' indices defined by \eqref{eq:sobol-V} with $I=\{1, \ldots , m\}$.

The application of such a procedure requires technical assumptions on the initial parametrized SDE, which we do not develop here for sake of concision.

\section{A new methodology based on Feynman-Kac representation formulas}
\label{sec:notre-meth}

This section is devoted to the presentation of the new methodology we propose for performing sensitivity analysis of parametrized stochastic differential equations. Let us recall that we consider in our approach that our model is a stochastic model depending on unknown parameters $\xi=(\xi_1, \ldots , \xi_m)^T$, the stochasticity being induced by the $d$-Brownian motion. We then are only interested by the mean value of a functional of the model output, where the mean is taken with respect to the $d$-Brownian motion which drives the inherent randomness of the model. It is different in nature from the point of view adopted in \cite{le2015pc,jimenez2017nonintrusive}, in which the authors are interested in the sensitivity of the model output with respect to both the uncertain parameters and the noise inherent to the system.
Our approach is based on Feynman-Kac representation formulas which establish a link between parabolic or elliptic partial differential equations and stochastic processes.
More precisely, Feynman-Kac formulas allow interpreting the quantities $\gU$ and~$\gV$ as the solutions of some partial differential equations. 
The literature proposes a broad range of methods for the sensitivity analysis of parametrized partial differential equations \citep[see, e.g.,][and references therein]{nouy2017low}. Hereafter, for each of both quantities of interest under study in this paper, we focus on a method based on a stochastic Galerkin polynomial chaos approximation of the solution of the parametrized PDE we obtain from the Feynman-Kac representation.

In Subsection \ref{ssec:feyn-kac} below, we state in Theorems \ref{thm:feyn-U} and \ref{thm:feyn-V} the Feynman-Kac representations of our quantities of interest $\gU$ and $\gV$. Then in Subsection \ref{ssec:ell} (resp. Subsection~\ref{ssec:para}), we introduce our methodology based on stochastic Galerkin approximation for the estimation of Sobol' indices associated to $\gU$ (resp. $\gV$). 
\medskip

\subsection{Link between $\gU$ and $\gV$ and some stochastic partial differential equations}
\label{ssec:feyn-kac}

Theorem \ref{thm:feyn-U} below provides an interpretation of $\gU$ as the solution of an elliptic problem. 

\begin{theorem}
\label{thm:feyn-U}
Let us consider the following elliptic stochastic partial differential equation
\begin{equation}
\label{eq:Pb-ell}
\left\{\begin{array}{rlll}
\frac 1 2 \sum_{k,j=1}^da_{kj}(x,\xi)\partial^2_{x_kx_j}u(x,\xi)+\sum_{k=1}^db_k(x,\xi)\partial_{x_k}u(x,\xi)&=&-1&\forall x\in D\\
\\
u(x,\xi)&=&0&\forall x\in\partial D.\\
\end{array}
\right.
\end{equation}
\begin{enumerate}
\item[i)]
Assume (A3)-(A5). Then, for almost every value of $\xi$, there exists a unique solution to \eqref{eq:Pb-ell} in 
$C^2(\overline{D})$.
\item[ii)]
If moreover (A2) is satisfied, then for {\it a.e.} $\xi$, the quantity $\gU(x,\xi)$ is solution of the partial differential equation defined by \eqref{eq:Pb-ell}. 
\end{enumerate}
\end{theorem}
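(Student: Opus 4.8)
The plan is to prove the two assertions separately: (i) is a direct application of the classical Schauder theory for linear elliptic Dirichlet problems, and (ii) is a Dynkin-type identity obtained by applying Itô's formula to the classical solution of \eqref{eq:Pb-ell} and stopping the diffusion at $\tau_D$.

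\emph{Proof of (i).} On the bounded set $\bar{D}$ the coefficients $x\mapsto a_{kj}(x,\xi)$ and $x\mapsto b_k(x,\xi)$ are, by (A3), globally Lipschitz in $x$ uniformly in $\xi$, hence bounded on $\bar{D}$ and Hölder continuous with exponent $\alpha=1$; (A4) gives uniform ellipticity of $a(\cdot,\xi)$ on $D$, and (A5) gives $\partial D\in C^{2,\alpha}$ with $\alpha=1$. The operator in \eqref{eq:Pb-ell} has no zeroth-order term (so that coefficient is $\equiv 0\le 0$), the boundary datum is $0\in C^{2,\alpha}(\partial D)$, and the right-hand side $-1$ is trivially in $C^{\alpha}(\bar{D})$. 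These are exactly the hypotheses of the classical solvability theorem for the Dirichlet problem (e.g.\ Theorem~6.14 in \cite{trudinger}), which yields, for every value of $\xi$, a unique $u(\cdot,\xi)\in C^{2,\alpha}(\bar{D})\subset C^2(\bar{D})$ solving \eqref{eq:Pb-ell}, uniqueness coming from the weak maximum principle since the zeroth-order coefficient is nonpositive. I would also record that every $C^2(\bar{D})$-norm of $u(\cdot,\xi)$ is finite — used only qualitatively below.

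\emph{Proof of (ii).} Fix $x\in D$ and a value $z$ of $\xi$; by Lemma~\ref{lem:tau-L1} together with Remark~\ref{rem:esp-xi} we may assume, outside a $\P_\xi$-null set, that $\gU(x,z)=\int_C\tau_D(\theta,z)\,\W(\dd\theta)<\infty$, so in particular $\tau_D<\infty$ $\W$-a.s. Let $u=u(\cdot,z)$ be the solution from (i), extended to $\tilde u\in C^2_c(\R^d)$ (possible since $\bar{D}$ is a compact $C^{2,\alpha}$ domain). Applying Itô's formula to $\tilde u(X_{t\wedge\tau_D})$ for $X$ solving \eqref{eq:eds-uq} with parameter $z$ started at $x$, and using that $X_s\in D$ — hence $\tfrac12\sum_{k,j}a_{kj}(X_s,z)\partial^2_{x_kx_j}u(X_s)+\sum_k b_k(X_s,z)\partial_{x_k}u(X_s)=-1$ — for $s<\tau_D$, I get
\[
u(X_{t\wedge\tau_D})=u(x)-(t\wedge\tau_D)+\int_0^{t\wedge\tau_D}\nabla u(X_s)^{T}\sigma(X_s,z)\,\dd W_s .
\]
The stochastic integrand is bounded, since $\nabla u$ and $\sigma(\cdot,z)$ are continuous on the compact $\bar{D}$ and the path stays in $\bar{D}$ up to $\tau_D$, so that term is a true (square-integrable) martingale of mean zero. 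Taking $\E^x[\,\cdot\,|\,z\,]$, which by Remark~\ref{rem:esp-xi} is the $\W$-expectation, gives
\[
\E^x[u(X_{t\wedge\tau_D})\,|\,z]=u(x)-\E^x[t\wedge\tau_D\,|\,z].
\]
Letting $t\to\infty$: on the full-measure event $\{\tau_D<\infty\}$ path continuity and openness of $D$ force $X_{\tau_D}\in\partial D$, whence $u(X_{t\wedge\tau_D})\to u(X_{\tau_D})=0$, and $|u|\le\sup_{\bar{D}}|u|$ gives, by dominated convergence, $\E^x[u(X_{t\wedge\tau_D})\,|\,z]\to 0$; monotone convergence gives $\E^x[t\wedge\tau_D\,|\,z]\uparrow\E^x[\tau_D\,|\,z]=\gU(x,z)$. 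Hence $\gU(x,z)=u(x,z)$ for $x\in D$, and for $x\in\partial D$ one has $\tau_D=0$ so $\gU(x,z)=0=u(x,z)$; thus $\gU(\cdot,z)\equiv u(\cdot,z)$ solves \eqref{eq:Pb-ell}. Since both sides are jointly measurable in $(x,z)$ and the identity holds for each $x$ and $\P_\xi$-a.e.\ $z$, it holds, by Fubini, for $\P_\xi$-a.e.\ $z$ simultaneously in $x$.

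\emph{Main obstacle.} Conceptually the proof is standard, so the work is mostly bookkeeping: checking that the regularity provided by (A3)--(A5) is precisely the input Schauder theory needs (Lipschitz $\Rightarrow$ Hölder on a bounded domain; vanishing, hence nonpositive, zeroth-order term for uniqueness), and justifying the interchange of limit and expectation as $t\to\infty$. The latter is the genuinely delicate step: it rests on $\tau_D<\infty$ $\W$-a.s.\ so that the stopped process actually reaches $\partial D$ (this is where Lemma~\ref{lem:tau-L1} is invoked), on the boundedness of $u$ on $\bar{D}$ for dominated convergence, and on the stochastic integral being a true and not merely local martingale so that it vanishes under $\E^x[\cdot\,|\,\xi]$ — all consequences of $u\in C^2(\bar{D})$ and of the confinement of $X$ to the compact $\bar{D}$ on $[0,\tau_D]$.
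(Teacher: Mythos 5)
Your proof is correct and follows essentially the same route as the paper: part (i) invokes the same Schauder existence/uniqueness result (Theorem~6.14 of \cite{trudinger}) with the same verification of its hypotheses, and for part (ii) the paper simply cites the elliptic Feynman--Kac formula (Theorem~5.7.2 of \cite{kara}) together with Remark~\ref{rem:esp-xi} and Lemma~\ref{lem:tau-L1}, whereas you reproduce the standard It\^o/Dynkin proof of that cited formula. The only difference is that you write out what the paper outsources to a reference; the mathematics is identical.
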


%\begin{remark}
%\label{rem:exist-ell-class}
%Note that Assumptions 3 to 5 ensure that for almost every value of $\xi$ a unique solution to~\eqref{eq:Pb-ell} exists in $C(\bar{D})\cap C^2(D)$  (see pp365-366 of \cite{kara}, and the references therein).
%\end{remark}

\begin{proof}[Proof of Theorem \ref{thm:feyn-U}]
Point $i)$ is a direct consequence of Theorem 6.14 in \cite{trudinger}.
Taking into account Remark \ref{rem:esp-xi} and Lemma \ref{lem:tau-L1}, $ii)$ follows simply from the elliptic Feynman-Kac formula given in Theorem 5.7.2 in \cite{kara}.
\end{proof}

We now turn to the interpretation of $\gV$. 
\vspace{0.2cm}
\begin{theorem}
\label{thm:feyn-V}
Let us consider the following parabolic stochastic partial differential equation
\begin{equation}
\label{eq:Pb-para}
\left\{\begin{array}{rcl}
\partial_t v(t,x,\xi)&=&\frac 1 2 \sum_{k,j=1}^da_{kj}(x,\xi)\partial^2_{x_kx_j}v(t,x,\xi)+\sum_{k=1}^db_k(x,\xi)\partial_{x_k}v(t,x,\xi),\\\\
&&\;\;\;\;\;\;\;\;\;\;\;\;\;\;\;\;\;\;\;\;\;\;\;\;\;\;\;\;\;\;\;\;\;\;\;\;\;\;\;\;\;\;\;\;\;\;\;\;\;\;\;\;\;\;\;\;\;\;\;\;\forall (t,x)\in(0,T]\times D\\
\\
v(t,x,\xi)&=&0, \;\forall (t,x)\in (0,T]\times\partial D\\
\\
v(0,x,\xi)&=&f(x),\;\forall x\in\bar{D}.\\
\end{array}
\right.
\end{equation}

i) Assume (A3)-(A6). Then for almost every value of $\xi$, there exists a unique solution to~\eqref{eq:Pb-para} in 
$C^{1,2}([0,T]\times \bar{D})$.

\vspace{0.2cm}
ii) Assume in addition (A2). Then the quantity $\gV(t,x,\xi)$,  $0<t\leq T$, is given by the solution at time~$t$ of the parabolic stochastic partial differential equation \eqref{eq:Pb-para}.
\end{theorem}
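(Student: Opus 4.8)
The plan is to prove i) by classical parabolic regularity theory and then deduce ii) from i) via an Itô/stopping argument. For i), I would invoke the solvability theory for the first initial--boundary value problem for uniformly parabolic operators: under (A3) the coefficients $a_{kj}=(\sigma\sigma^T)_{kj}$ and $b_k$ are Lipschitz, hence H\"older continuous, on the bounded set $\bar D$ (uniformly in $\xi$); under (A4) the operator $L:=\frac12\sum_{k,j=1}^d a_{kj}(\cdot,\xi)\partial^2_{x_kx_j}+\sum_{k=1}^d b_k(\cdot,\xi)\partial_{x_k}$ appearing in \eqref{eq:Pb-para} is uniformly elliptic on $\bar D$; under (A5) the boundary $\partial D$ is $C^{2,\alpha}$; and under (A6) the datum $f$ lies in $C^2(\bar D)$ and satisfies the zeroth- and first-order compatibility conditions at the parabolic corner $\{0\}\times\partial D$. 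Indeed $f\equiv 0$ on $\partial D$ is the zeroth-order condition, while $\frac12\sum_{k,j}a_{kj}\partial^2_{x_kx_j}f+\sum_k b_k\partial_{x_k}f\equiv 0$ on $\partial D$ is exactly the first-order condition: differentiating the lateral condition $v\equiv 0$ in $t$ gives $\partial_t v\equiv 0$ on $(0,T]\times\partial D$, while the equation forces $\partial_t v(0,\cdot)=Lf$, so matching requires $Lf\equiv 0$ on $\partial D$, which is \eqref{eq:compatibility}. Classical parabolic Schauder theory (see, e.g., \cite{friedman2}) then yields, for almost every value of $\xi$, a unique solution $v(\cdot,\cdot,\xi)\in C^{1,2}([0,T]\times\bar D)$ of \eqref{eq:Pb-para}.

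For ii), fix such a $\xi$ and work on the Wiener space; by (A2) (see Remark~\ref{rem:esp-xi}) the expectation $\E^x[\,\cdot\,|\,\xi\,]$ is the corresponding average with respect to $W$. I would apply It\^o's formula to $s\mapsto v(t-s,X_s,\xi)$ on the interval $[0,t]$ — licit since $v\in C^{1,2}([0,T]\times\bar D)$, $t-s\in[0,t]\subseteq[0,T]$, and $X_s\in\bar D$ for $s\le\tau_D$ — stopped at $\tau_D$. Because $\partial_t v=Lv$ on $(0,T]\times D$, the finite-variation part vanishes on $[0,t\wedge\tau_D]$, so
\begin{equation*}
v\big(t-(t\wedge\tau_D),X_{t\wedge\tau_D},\xi\big)=v(t,x,\xi)+\int_0^{t\wedge\tau_D}\nabla_x v(t-s,X_s,\xi)^T\sigma(X_s,\xi)\,dW_s.
\end{equation*}
Since $\nabla_x v$ is continuous on the compact set $[0,T]\times\bar D$ and $\sigma(\cdot,\xi)$ is bounded on $\bar D$ (by (A3) and boundedness of $D$), the integrand is bounded, hence the stochastic integral is a true martingale with zero expectation under $\E^x[\,\cdot\,|\,\xi\,]$. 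On $\{t\le\tau_D\}$ the left-hand side equals $v(0,X_t,\xi)=f(X_t)$; on $\{t>\tau_D\}$ it equals $v(t-\tau_D,X_{\tau_D},\xi)=0$ since $X_{\tau_D}\in\partial D$ and $0<t-\tau_D\le T$, so the lateral boundary condition applies (the borderline case $\tau_D=t$ is covered by the first case, as then $X_t\in\partial D$ and $f(X_t)=0$ by \eqref{eq:compatibility}). Thus the left-hand side equals $f(X_t)\1_{t\le\tau_D}$ a.s.; taking $\E^x[\,\cdot\,|\,\xi\,]$ gives $\gV(t,x,\xi)=v(t,x,\xi)$, and the uniqueness from i) identifies $\gV(t,\cdot,\xi)$ with the unique classical solution of \eqref{eq:Pb-para} at time $t$, for a.e. $\xi$.

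The main obstacle is the analytic part i), and within it the regularity of $v$ up to the parabolic corner $\{0\}\times\partial D$: interior and lateral Schauder estimates are routine, but matching the initial and boundary data is precisely what forces Assumption (A6); without \eqref{eq:compatibility} the solution would in general only be H\"older near the corner, not $C^{1,2}$, and the It\^o argument in ii) would fail. The probabilistic part ii) is then essentially bookkeeping — the only delicate points being the localization by $\tau_D$ (to keep $X$ in $\bar D$, where $v$ and $\nabla_x v$ are controlled) and the verification of the boundary/compatibility conditions on $\{t>\tau_D\}$. Alternatively one could invoke a boundary Feynman--Kac statement directly, in the parabolic analogue of the elliptic Theorem~5.7.2 of \cite{kara} used for \eqref{eq:Pb-ell}, but I would prefer to present the self-contained It\^o argument sketched above.
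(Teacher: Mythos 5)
Your proposal is correct, and both halves land on the same mathematical content as the paper, though part ii) is packaged differently. For i) you and the paper both reduce to classical solvability theory for the first initial--boundary value problem of a uniformly parabolic operator with H\"older coefficients, $C^{2,\alpha}$ boundary and the zeroth- and first-order compatibility conditions at the parabolic corner; the paper invokes Theorem 5.14 of \cite{lieberman} after first extending $f$ to a time-dependent function $\bar f$ (taking simply $\bar f(t,x)=f(x)$) so as to fit the hypotheses of that theorem, whereas you cite Schauder theory \`a la \cite{friedman2} directly --- your explanation of why \eqref{eq:compatibility} is exactly the first-order corner condition is a welcome addition that the paper leaves implicit. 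For ii) the paper cites the parabolic Feynman--Kac formula of Theorem 4.4.5 in \cite{gobet-livre-num}, which is stated in backward form with terminal condition, and then appeals to a time-reversal argument together with the time-homogeneous Markov property to convert it to the forward/initial-condition form; you instead give the self-contained It\^o computation on $s\mapsto v(t-s,X_s,\xi)$ stopped at $\tau_D$, which builds the time reversal into the test function and is precisely the proof of the cited theorem. Your route is longer on the page but avoids the conversion step and makes visible where each hypothesis is used (boundedness of $\nabla_x v$ and $\sigma$ on $\bar D$ for the martingale property, the lateral and corner conditions for the identification of the stopped value with $f(X_t)\1_{t\le\tau_D}$); the paper's route is shorter but delegates exactly those verifications to the reference. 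Both are valid.
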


\begin{proof}[Proof of Theorem \ref{thm:feyn-V}]
 As $f$ satisfies the compatibility condition 
\eqref{eq:compatibility}
one may extend it for any $\xi$ in a function $\bar{f}$ of class $C^{1,2}([0,T]\times\bar{D})$, with uniformly Lipschitz derivatives (to order $1$ in time and up to order $2$ in space) and satisfying $\bar{f}\equiv 0$ on $(0,T]\times\partial D$,   and
$\bar{f}\equiv f$ on $\{0\}\times\bar{D}$
and
$\frac 1 2\sum_{k,j}^da_{kj}\partial^2_{x_kx_j}\bar{f}+\sum_{j=1}^db_j\partial_{x_j}\bar{f}\equiv~0$
 on $\{0\}\times\partial D$ (e.g., let us consider $\bar{f}(t,x)=f(x) \; \forall \, (t,x) \in [0,T]\times\bar{D}$). 
Point i) then follows from Theorem 5.14 in \cite{lieberman}.
Taking into account Remark \ref{rem:esp-xi}, the result ii) follows simply from the parabolic Feynman-Kac formula given in Theorem 4.4.5 in \cite{gobet-livre-num}. Note however that this Feynman-Kac formula is given for parabolic problems in backward form and with terminal condition instead of initial one, as the authors deal with the general case of time-inhomogeneous coefficients. But in the case of time-homogeneous coefficients, one may use a reverting time argument, and the time-homogeneous Markov property of $X$, in order to get the Feynman-Kac formula for the parabolic problem in the forward form and with initial condition.
\end{proof}

At this stage of the paper, we are concerned with {\it classical solutions} of stochastic PDEs, in the sense of Definition \ref{classic} below.
In Subsections \ref{ssec:ell} and \ref{ssec:para}, we will introduce the notion of {\it weak solutions at the stochastic level}.

\begin{definition}\label{classic}
A function $u:\bar{D}\times \Xi\to\R$ (resp. $v:[0,T]\times\R^d \times \Xi\to\R$) is a classical solution of~\eqref{eq:Pb-ell}
(resp. \eqref{eq:Pb-para}) if for a.e. value of $\xi$ the function $x\mapsto u(x,\xi)$ (resp. $(t,x)\mapsto v(t,x,\xi)$) is a solution to~\eqref{eq:Pb-ell} (resp. \eqref{eq:Pb-para}) in $C^2(\overline{D})$ (resp. $C^{1,2}([0,T]\times\bar{D})$).
\end{definition}

\begin{corollary}
\label{cor:sol-classique}
Assume (A3)-(A5) (resp. (A3)-(A6)). There exists a unique classical solution of the stochastic PDE \eqref{eq:Pb-ell} (resp. \eqref{eq:Pb-para}). If in addition $(A2)$ is satisfied, this classical solution is provided by 
$\gU(x,\xi)$ (resp. $\gV(t,x,\xi)$).
\end{corollary}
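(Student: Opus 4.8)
The plan is to read this off directly from Theorems \ref{thm:feyn-U} and \ref{thm:feyn-V} together with Definition \ref{classic}, treating the elliptic and parabolic cases in parallel through three short steps: existence, uniqueness, and identification.

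For existence I would invoke point i) of Theorem \ref{thm:feyn-U} (resp. Theorem \ref{thm:feyn-V}): there is a set $\Xi_0\subset\Xi$ with $\P_\xi(\Xi_0)=1$ such that for every $\xi\in\Xi_0$ the problem \eqref{eq:Pb-ell} (resp. \eqref{eq:Pb-para}) admits a unique solution $u_\xi\in C^2(\bar D)$ (resp. $C^{1,2}([0,T]\times\bar D)$). Setting $u(\cdot,\xi)=u_\xi$ for $\xi\in\Xi_0$ and, say, $u(\cdot,\xi)\equiv 0$ otherwise produces a function on $\bar D\times\Xi$ (resp. on the corresponding product domain) which is a classical solution in the sense of Definition \ref{classic}.

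For uniqueness I would take two classical solutions $u^{(1)},u^{(2)}$: for a.e.\ $\xi$ both $x\mapsto u^{(j)}(x,\xi)$ lie in $C^2(\bar D)$ and solve \eqref{eq:Pb-ell}, so the uniqueness half of Theorem \ref{thm:feyn-U} i) (Theorem 6.14 in \cite{trudinger}) forces $u^{(1)}(\cdot,\xi)=u^{(2)}(\cdot,\xi)$, hence $u^{(1)}=u^{(2)}$ for a.e.\ $\xi$, which is the appropriate notion of equality here. The parabolic case is identical, using the uniqueness in Theorem \ref{thm:feyn-V} i).

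For the identification under (A2): the map $(x,\xi)\mapsto\gU(x,\xi)=\E^x[\tau_D\mid\xi]$ is finite a.e.\ by Lemmas \ref{lem:tau-L1} and \ref{coro:gU-L2}, and point ii) of Theorem \ref{thm:feyn-U} says that for a.e.\ $\xi$ it belongs to $C^2(\bar D)$ and solves \eqref{eq:Pb-ell}; thus it is a classical solution, and by the uniqueness step it coincides (for a.e.\ $\xi$) with the one built above. The same argument with Theorem \ref{thm:feyn-V} ii) and Lemma \ref{lem:fX-Lq} identifies $\gV(t,x,\xi)$ with the classical solution of \eqref{eq:Pb-para} at time $t$. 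I expect no genuine obstacle: the corollary is essentially bookkeeping over the two theorems. The only point deserving a word is that Definition \ref{classic} asks for an honest function on the full product domain; in the existence step this is handled by an arbitrary extension over the $\P_\xi$-null exceptional set, with no joint measurability in $(x,\xi)$ needed at this stage, while under (A2) the objects $\gU$ and $\gV$ provide such functions directly.
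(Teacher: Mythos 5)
Your argument is correct and matches the paper's intent exactly: the corollary is stated without a separate proof precisely because it is, as you say, bookkeeping over points i) and ii) of Theorems \ref{thm:feyn-U} and \ref{thm:feyn-V} together with Definition \ref{classic}. Your care about the $\P_\xi$-null exceptional set and the arbitrary extension there is a reasonable extra precision that the paper leaves implicit.
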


From the Feynman-Kac representations of our quantities of interest stated in Corollary \ref{cor:sol-classique}, we propose in Subsections \ref{ssec:ell}
and \ref{ssec:para} to apply a methodology based on stochastic Galerkin polynomial chaos approximations to approximate Sobol' indices. 
 For the clarity of exposure we have found convenient to present  first the elliptic case (Subsection \ref{ssec:ell}), and then to turn to the parabolic one which is technically more cumbersome (Subsection~\ref{ssec:para}).

\subsection{Approximation of $S_I(\gU)$ using the elliptic stochastic PDE}
\label{ssec:ell}

We want now to construct a discretized approximation of $\gU(\xi)$. So far (Corollary~\ref{cor:sol-classique}) the function $\gU$
is seen as a classical solution of \eqref{eq:Pb-ell} but it can be shown to be a {\it weak solution at the stochastic level} of some equivalent PDE in divergence form, living in the space $H^1_0(D)\otimes L^2(\Xi,\P_\xi)$ (this will be addressed in Theorem 
\ref{thm:sumup-ell} below; the definition of $H^1_0(D)$ will be recalled in Subsubsection \ref{sssec:not-sobolev}). 
Therefore it is possible to perform a Galerkin projection on some finite dimensional subspace $V^N\otimes \cS^P$
where $\cS^P=\mathrm{Span}(\{\Psi_q \}_{q=0}^P)$ with $\{\Psi_q\}_{q=0}^P$ introduced in Subsection \ref{sssec:lemaitre}
 and $V^N=\mathrm{Span}(\{\phi^N_i\}_{i=1}^{N-1})$ with $\{\phi^N_i\}_{i=1}^{N-1}$ some finite element basis of functions in $H^1_0(D)$ (see Subsubsection \ref{sssec:not-sobolev} for details). That is to say $\gU(x,\xi)$ will be discretized both with respect to the space variable $x$ and the uncertain parameter $\xi$. Concerning weak solutions and Galerkin projection we are inspired by the framework in \cite{nouy1}, \cite{nouy2017low}. 
% But in these references the PDE is in some sense already discretized in space and the main issue is to treat the discretization w.r.t.~$\xi$, i.e. to prove some convergence to the solution to the PDE discretized in space. 
In this article we provide a rigorous proof of the convergence of our Galerkin approximation toward the weak solution when the discretization parameters in $x$ and $\xi$ simultaneously converge (see in particular the proof of Lemma \ref{lem:sol-faible-ell}, postponed to the Appendix).
 Once we have got an approximation (also called reduced model or metamodel)~$\gU^{N,P}$ of the weak solution, therefore of $\gU(\xi)$, we deduce an analytical formula for approximating Sobol' indices $S_I(\gU)$, $I \subset \{1, \ldots , m\}$. The arguments to derive such an analytical formula are similar to the ones used to derive \eqref{approx:var} and \eqref{approx:num} in Subsection \ref{sssec:lemaitre}, and mainly rely on the orthonormality of the basis~$\{\Psi_q \}_{q=0}^P$.

\vspace{0.3cm}

The subsection is organised ad follows: 
in Subsubsection \ref{sssec:not-sobolev} we introduce some notation on the functional spaces we will use both in this subsection and in Subsection \ref{ssec:para} about the parabolic case.
In Subsubsection \ref{sssec:div} we reformulate the elliptic stochastic PDE~\eqref{eq:Pb-ell} in divergence form. This is needed in order to  introduce in Subsubsection~\ref{sssec:weak} the definition of weak solutions at the stochastic level of elliptic stochastic PDEs, and of their Galerkin approximation. The aforementioned Theorem \ref{thm:sumup-ell} is stated in Subsubsection~\ref{sssec:goal} and establishes the convergence of the Galerkin approximation $\gU^{N,P}$ toward the quantity of interest $\gU$. In Subsubsection \ref{sssec:sob} it is explained how to use  $\gU^{N,P}$ in order to compute approximated Sobol' indices of $\gU$.

\subsubsection{Functional spaces notation}
\label{sssec:not-sobolev}

We denote by $H^1_0(D)$ the usual Sobolev space of functions in $H^1(D)$ vanishing on~$\partial D$ (note that in the sequel the derivatives may be understood in the weak sense). 
The space $H^1_0(D)$ is equipped with the norm $u\mapsto ||u||_{H^1(D)}=\int_D u^2+\int_D|\nabla u|^2$.
We denote $H^{-1}(D)$ the topological dual space of $H^1_0(D)$. 
Recall that we have the Gelfand triple $H^1_0(D)\hookrightarrow L^2(D)\hookrightarrow H^{-1}(D)$
(embeddings are continuous and dense). 

 We then introduce the tensor Hilbert spaces $\cH=L^2(D)\otimes L^2(\Xi,\P_\xi)$ 
and $\cV=H^1_0(D)\otimes L^2(\Xi,\P_\xi)$. The space $\cH$ 
is isomorphic to 
$L^2(\Xi,L^2(D),\P_\xi)$ and $L^2(D,L^2(\Xi,\P_\xi))$.
The space $\cV$ is isomorphic to 
$L^2(\Xi,H^1_0(D),\P_\xi)$ and 

$H^1_0(D,L^2(\Xi,\P_\xi))$. 
The space $\cH$ is equipped with the scalar product $\langle \cdot,\cdot\rangle_\cH$  
defined by
 $$\forall v,w\in\cH,\quad\langle v,w\rangle_\cH=\E_\xi\big(\int_D vw \big),$$
and with the norm $u\mapsto||u||_\cH:=\sqrt{\langle u,u\rangle_\cH}$.
The space $\cV$ is equipped 
  with the norm $||\cdot||_\cV$ defined as:
$$
u\mapsto \E_\xi||u(\cdot,\xi)||^2_{H^1(D)}=:||u||^2_\cV.$$
 
Denoting $\cV'$ the topological dual of $\cV$  we have the new Gelfand triple 
$\cV\hookrightarrow\cH\hookrightarrow\cV'$. Note that $\cV'=H^{-1}(D)\otimes L^2(\P_\xi)$. Note that $\langle \cdot,\cdot\rangle_\cH$ will also denote the  extension by continuity  
of the previous scalar product
to the dual pairing $\cV'\times\cV$.

\vspace{0.6cm}
\noindent
We introduce the finite dimensional approximation space $\cV^{N,P}\subset \cV$ defined as the tensor space 
\begin{equation}
\label{eq:cVNP}
\cV^{N,P}=V^N\otimes \cS^P.
\end{equation}
Recall that the space $\cS^P$ in \eqref{eq:cVNP} is defined as $\mathrm{Span}(\{\Psi_q \}_{q=0}^P)$ with $\{\Psi_q\}_{q=0}^P$ introduced in Subsection \ref{sssec:lemaitre}. Recall also that
the space $V^N$ is defined as $\mathrm{Span}(\{\phi^N_i\}_{i=1}^{N-1})$ with $\{\phi^N_i\}_{i=1}^{N-1}$ some finite element basis of functions in $H^1_0(D)$. More precisely, we choose $\phi^N_i$, $N \geq 2$, $1 \leq N-1$ such that $V_2 \subset \ldots \subset V_{N-1}\subset V_{N}$. This can be done for example by using linear $B$-spline basis functions.
Using interpolation properties, we get that $\cup_{N\geq 2} V_N$ is dense in $H^1_0(D)$. 

\medskip

We then have 
$\cV^{N,P}\subset\cV^{N+1,P+1}$  for any $N,P$, and $\bigcup_{N,P}\cV^{N,P}$ is dense in $\cV$.

\subsubsection{Elliptic PDE in divergence form}\label{sssec:div}

 For any vector field $U\in C^1(D;\R^d)$ we denote $\nabla\cdot U=\sum_{k=1}^d\partial_{x_k}U^k$  the divergence of $U$ and, for any $U\in C^1(D;\R)$ we denote $\nabla U=(\partial_{x_1}U,\ldots,\partial_{x_d}U)^T$ the gradient of $U$.
 
 \vspace{0.2cm}
 
 Assume that for a.e. value of $\xi$ the function $a(\cdot,\xi)$ is of class $C^1(D;\R^{d\times d})$ and define the coefficients $\tilde{a}$, $\tilde{b}$, the vector field $\partial \tilde{a}$, and the function $\tilde{f}$ on $D \times \Xi$ by
 \begin{equation}
 \label{eq:deftilde}
 \begin{array}{c}
\tilde{a}=a/2,\quad\quad(\partial \tilde{a})_j =\nabla\cdot(\tilde{a}_{1j},\ldots,\tilde{a}_{dj})^T,\;\;\forall \, 1\leq j\leq d\\
\\
\tilde{b}=\partial\tilde{a}-b\quad\text{ and }\quad \tilde{f}=1
\end{array} 
 \end{equation}
%(note that here we have considered the restrictions of $a$ and $b$ to $\bar{D}\times \Xi$ and that $\tilde{a}$, $\tilde{b}$ and $\tilde{f}$ 
% are only defined on $\bar{D}\times \Xi$).
 
It is  then clear that \eqref{eq:Pb-ell} is equivalent to
\begin{equation}
\label{eq:elldiv}
\left\{\begin{array}{rlll}
-\nabla\cdot\big(\tilde{a}(x,\xi)\nabla u(x,\xi)\big)+(\nabla u(x,\xi))^T\tilde{b}(x,\xi)&=&\tilde{f}(x,\xi)&\forall x\in D\\
\\
u(x,\xi)&=&0&\forall x\in\partial D.\\
\end{array}
\right.
\end{equation}

%\vspace{0.2cm}
%
%Note that if we assume:
%
%
%
%i) The domain $D$ satisfies A5.
%
%ii) The coefficient $\tilde{a}$ satisfies A4.
%
%iii) The coefficient $\tilde{a}(\cdot,\xi)$ is of class $C^2(D;\R^d)$ with bounded derivatives up to order $2$, with bounds not depending on 
%$\xi$.
%
%iv) The coefficient $\tilde{b}(\cdot,\xi)$ is of class $C^1(D;\R^d)$ with bounded derivatives, with bounds not depending on 
%$\xi$.
%
%
%v) The source term $\tilde{f}(\cdot,\xi)$ is continuous and bounded by a constant not depending on $\xi$.
%
%Then one can get, for a.e. value of $\xi$ a {\it classical solution} of \eqref{eq:elldiv} by invoking the arguments mentioned in Remark
%\ref{rem:exist-ell-class}.

\subsubsection{Weak solutions at the stochastic level to \eqref{eq:elldiv} and their Galerkin projection}
\label{sssec:weak}

\medskip
\noindent
We now introduce the bilinear form
\begin{equation}
\label{eq:def-A}
\forall u,v\in\cV,\quad\cA(u,v)=\E_\xi\Big( \int_D(\nabla u(\cdot,\xi))^T\tilde{a}(\cdot,\xi)\nabla v(\cdot,\xi)
+ \int_D(\nabla u(\cdot,\xi))^T\tilde{b}(\cdot,\xi)v(\cdot,\xi) \Big)
\end{equation}
and the linear form
$$
\forall v\in\cV,\quad F(v)=\E_\xi\Big(\int_D\tilde{f}(\cdot,\xi)v(\cdot,\xi)\Big)=\langle \tilde{f}, v\rangle_\cH.$$
We introduce now assumptions which ensure  the continuity and the coercivity of the bilinear form $\cA$.

\vspace{0.4cm}
{\bf Assumption 7 (A7).} There is a constant $\tilde{\Lambda}$ s.t.
$$
\forall 1\leq i,j\leq d, \;\;\forall x\in\bar{D},\;\forall z\in\Xi,\quad |\tilde{a}_{ij}(x,z)|\leq\tilde{\Lambda}.$$

\vspace{0.4cm}
{\bf Assumption 8 (A8).} There is a constant $\tilde{\lambda}>0$ s.t. 
$$
\forall y\in\R^d,\;\;\forall x\in\bar{D},\;\forall z\in\Xi,\quad y^T\tilde{a}(x,z)y\geq \tilde{\lambda} |y|^2.$$
 
 \vspace{0.4cm}
 Let us now  recall that thanks to
Poincaré's inequalities the norm 

$v\mapsto \Big(\int_D|\nabla v|^2\Big)^{1/2}$
is equivalent to$||\cdot||_{H^1(D)}$ on $H^1_0(D)$.
More precisely
\begin{equation}
\label{eq:equiv-norm}
\forall v\in H^1_0(D),\quad \Big(\int_D|\nabla v|^2\Big)^{1/2}\leq ||u||_{H^1(D)}\leq C(d,|D|)\Big(\int_D|\nabla v|^2\Big)^{1/2},
\end{equation}
with 
\begin{equation}
\label{eq:const-poinca}
C(d,|D|)=\sqrt{1+\Big( \frac{d\,\Gamma(d/2)}{2\pi^{d/2}}|D|\Big)^{1/d}}
\end{equation}
where $\Gamma(\cdot)$ denotes the Gamma function and $|D|$ is the volume of $D$ \citep[cf][, p164]{trudinger}.
 With this in mind we introduce the following assumption.
 
 \vspace{0.3cm}

{\bf Assumption 9 (A9).} Assume (A8) and
$$
\forall x\in \bar{D}, \;\;\forall z\in\Xi,\quad |\tilde{b}(x,z)|\leq \tilde{M},$$
 with $\tilde{M}<\dfrac{\tilde{\lambda}}{\sqrt{2C(d,|D|)}}$ where $C(d,|D|)$ is the constant defined in \eqref{eq:const-poinca}.
 
 \medskip
 
 \vspace{0.3cm}
 
We have the following result:
 
 \begin{lemma}
 \label{lem:sol-faible-ell}
 Assume (A7)-(A9). There exists a unique  weak solution $u$ at the stochastic level  of \eqref{eq:elldiv}, in the sense that $u\in \cV$ and satisfies
\begin{equation*}
\forall v\in\cV,\quad \cA(u,v)=F(v).
\end{equation*}
This solution $u$ can be approximated by its Galerkin projection $u^{N,P}$, which is the unique element in $\cV^{N,P}$ that satisfies
\begin{equation}
\label{eq:galell}
\forall v\in\cV^{N,P},\quad \cA(u^{N,P},v)=F(v).
\end{equation}
More precisely we have
 $$
||u^{N,P}-u||_\cV\xrightarrow[N\to\infty,P\to\infty]{}0.
$$ 
 \end{lemma}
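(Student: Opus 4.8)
The plan is to proceed in three stages: first establish existence and uniqueness of the weak solution in $\cV$ via Lax--Milgram, then establish existence, uniqueness and stability of the Galerkin projection $u^{N,P}$ by the same argument restricted to the finite-dimensional subspace, and finally prove convergence by combining C\'ea's lemma with the density of $\bigcup_{N,P}\cV^{N,P}$ in $\cV$. To apply Lax--Milgram to the bilinear form $\cA$ and the linear form $F$ on the Hilbert space $\cV$ (equipped with $||\cdot||_\cV$, which by Poincar\'e's inequality \eqref{eq:equiv-norm} is equivalent to the norm induced by $v\mapsto\E_\xi\int_D|\nabla v(\cdot,\xi)|^2$), I need: (i) continuity of $\cA$, (ii) coercivity of $\cA$, and (iii) continuity of $F$. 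Continuity of $\cA$ follows from (A7) on $\tilde a$ and the boundedness of $\tilde b$ in (A9), via Cauchy--Schwarz in both the $x$-variable and the $\xi$-expectation: the first term is bounded by $\tilde\Lambda\,||u||_\cV||v||_\cV$ (up to the Poincar\'e constant) and the second by $\tilde M$ times a product of $L^2$-norms of $\nabla u$ and $v$, again controlled by $||u||_\cV||v||_\cV$. Continuity of $F$ is immediate since $\tilde f\equiv 1$ lies in $\cH$ and $F(v)=\langle\tilde f,v\rangle_\cH\leq||\tilde f||_\cH||v||_\cH\leq C||v||_\cV$.

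The coercivity is the crux. Taking $v=u$, the symmetric part gives $\E_\xi\int_D(\nabla u)^T\tilde a\nabla u\geq\tilde\lambda\,\E_\xi\int_D|\nabla u|^2$ by (A8). The first-order term is handled by $\big|\E_\xi\int_D(\nabla u)^T\tilde b\,u\big|\leq\tilde M\,\E_\xi\big(||\nabla u(\cdot,\xi)||_{L^2(D)}||u(\cdot,\xi)||_{L^2(D)}\big)$, and then Poincar\'e in the form $||u(\cdot,\xi)||_{L^2(D)}\leq C(d,|D|)||\nabla u(\cdot,\xi)||_{L^2(D)}$ (read off from \eqref{eq:equiv-norm}, \eqref{eq:const-poinca}) bounds this by $\tilde M\,C(d,|D|)\,\E_\xi\int_D|\nabla u|^2$. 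Wait --- one must check the exact power of the Poincar\'e constant appearing in \eqref{eq:equiv-norm} and whether a Young-inequality split is needed; the hypothesis $\tilde M<\tilde\lambda/\sqrt{2C(d,|D|)}$ in (A9) suggests one first applies Young's inequality $ab\leq \tfrac12(a^2+b^2)$ to the product $||\nabla u||||u||$, turning the cross term into $\tfrac{\tilde M}{2}(||\nabla u||^2+||u||^2)$ and then using Poincar\'e on the $||u||^2$ piece, after which the constants combine to leave a strictly positive coercivity constant. In any case, assembling the two estimates yields $\cA(u,u)\geq\alpha\,||u||_\cV^2$ for some $\alpha>0$ depending only on $\tilde\lambda$, $\tilde M$, $d$, $|D|$, which is where (A9) is used in an essential way. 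Lax--Milgram then gives the unique $u\in\cV$ with $\cA(u,v)=F(v)$ for all $v\in\cV$; applying the same to the closed subspace $\cV^{N,P}\subset\cV$ gives the unique Galerkin solution $u^{N,P}$, and coercivity plus continuity of $F$ give the uniform bound $||u^{N,P}||_\cV\leq||F||_{\cV'}/\alpha$.

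For convergence, I invoke C\'ea's lemma: from $\cA(u-u^{N,P},v)=0$ for all $v\in\cV^{N,P}$ together with continuity ($||\cA||\leq\beta$) and coercivity ($\alpha$) of $\cA$, one gets $||u-u^{N,P}||_\cV\leq\tfrac{\beta}{\alpha}\,\inf_{v\in\cV^{N,P}}||u-v||_\cV$. It then remains to show the right-hand side tends to $0$ as $N,P\to\infty$, i.e. that $\bigcup_{N,P}\cV^{N,P}$ is dense in $\cV$; this is exactly the density statement recorded at the end of Subsubsection \ref{sssec:not-sobolev} (the $V_N$ are nested finite-element spaces with $\bigcup_N V_N$ dense in $H^1_0(D)$, and the $\cS^P$ exhaust $L^2(\Xi,\P_\xi)$ by (A1)), combined with the fact that the algebraic tensor product of dense subspaces is dense in the Hilbert tensor product $\cV=H^1_0(D)\otimes L^2(\Xi,\P_\xi)$. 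I expect the main obstacle to be pinning down the coercivity constant precisely enough to see that the condition $\tilde M<\tilde\lambda/\sqrt{2C(d,|D|)}$ in (A9) is exactly what makes it positive --- i.e. tracking the interplay between the Young split and the Poincar\'e constant --- while the density/C\'ea part is essentially routine once the functional-analytic setup is in place.
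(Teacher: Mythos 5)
Your proposal is correct and follows essentially the same route as the paper: continuity and coercivity of $\cA$, Lax--Milgram on $\cV$ and on $\cV^{N,P}$, then C\'ea's lemma combined with the nestedness and density of the spaces $\cV^{N,P}$. The only point you leave open --- how the constants combine so that (A9) yields a strictly positive coercivity constant --- is settled in the paper by the \emph{weighted} Young inequality (Lemma 8.4 of Gilbarg--Trudinger), which absorbs the drift term into half of the ellipticity and gives $\cA(v,v)\geq\big(\tfrac{\tilde{\lambda}}{2C(d,|D|)}-\tilde{\lambda}\nu^2\big)\|v\|_\cV^2$ with $\nu=\tilde{M}/\tilde{\lambda}$, for which (A9) is exactly the positivity condition; the unweighted split $ab\leq\tfrac12(a^2+b^2)$ you sketch would produce a different smallness requirement on $\tilde{M}$.
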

 
 \begin{proof}
 See the Appendix.
 \end{proof}
 
 \subsubsection{Approximation of the quantity of interest $\gU$}
 \label{sssec:goal}

\vspace{0.6cm}
\noindent
 The metamodel for $\gU$ is provided by the next theorem:

\vspace{0.3cm}

\begin{theorem}
\label{thm:sumup-ell}

 Assume $a(\cdot,\xi)$ belongs to $C^1(D;\R^{d \times d})$ for a.e. value of $\xi$ and that $\tilde{a}$, $\tilde{b}$ and $\tilde{f}$ are defined by \eqref{eq:deftilde}.
Assume  (A2)-(A5). Assume (A9).  Then the quantity of interest $\gU(x,\xi)$ is a weak solution to \eqref{eq:elldiv}, and can be approximated by $\gU^{N,P}$ the solution of \eqref{eq:galell}, in the sense that
$$
||\gU^{N,P}-\gU||_\cV\xrightarrow[N\to\infty,P\to\infty]{}0.
$$
\end{theorem}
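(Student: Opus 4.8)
The plan is to combine the two main ingredients already assembled in the excerpt: the Feynman-Kac identification of $\gU$ as the classical solution of \eqref{eq:Pb-ell} (Corollary \ref{cor:sol-classique}, which uses (A2)--(A5)), and the well-posedness and Galerkin convergence for the divergence-form problem \eqref{eq:elldiv} (Lemma \ref{lem:sol-faible-ell}, which uses (A7)--(A9)). The only real work is to check that the hypotheses of Theorem \ref{thm:sumup-ell} do imply those of Lemma \ref{lem:sol-faible-ell}, and that the classical solution $\gU$ coincides with the weak solution produced by that lemma; once these two points are in place the convergence statement $\|\gU^{N,P}-\gU\|_\cV\to 0$ is literally the conclusion of Lemma \ref{lem:sol-faible-ell} applied to $u=\gU$.

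First I would verify that, under the standing assumptions of the theorem, the coefficients $\tilde a$, $\tilde b$ defined by \eqref{eq:deftilde} satisfy (A7), (A8), (A9). For (A8) this is immediate: (A4) gives $y^T a(x,z) y\geq \lambda|y|^2$, hence $y^T\tilde a(x,z)y=\tfrac12 y^T a(x,z)y\geq \tfrac{\lambda}{2}|y|^2$, so (A8) holds with $\tilde\lambda=\lambda/2$. For (A7) one uses the growth/regularity of $\sigma$ in (A3): on the bounded domain $\bar D$ the entries $a_{ij}=\sum_k\sigma_{ik}\sigma_{jk}$ are bounded uniformly in $z$ by \eqref{eq:growth}, and the hypothesis $a(\cdot,\xi)\in C^1(D;\R^{d\times d})$ for a.e.\ $\xi$ together with (A5) gives the $C^1$ regularity needed for $\partial\tilde a$ to be defined; boundedness of $\tilde a$ on $\bar D$ is then clear. (A9) is assumed directly in the statement. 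So Lemma \ref{lem:sol-faible-ell} applies: there is a unique $u\in\cV$ with $\cA(u,v)=F(v)$ for all $v\in\cV$, and its Galerkin projections $u^{N,P}$ converge to it in $\cV$.

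Next I would identify $u$ with $\gU$. By Corollary \ref{cor:sol-classique}, under (A2)--(A5) the classical solution of \eqref{eq:Pb-ell} is $\gU(x,\xi)$, and by the equivalence noted around \eqref{eq:elldiv} this same function solves \eqref{eq:elldiv} pointwise in $x$ for a.e.\ $\xi$. It remains to check that $\gU\in\cV$ and that it is a weak solution in the variational sense, i.e.\ $\cA(\gU,v)=F(v)$ for all $v\in\cV$. Membership in $\cV$ follows from the uniform bound $|\gU(x,\xi)|\leq C$ of Lemma \ref{coro:gU-L2} combined with elliptic a priori estimates (Schauder or $H^1$ estimates for \eqref{eq:Pb-ell} with right-hand side $-1$ on the bounded regular domain $D$, uniform in $\xi$ by (A3)--(A5)): these give a bound on $\|\gU(\cdot,\xi)\|_{H^1_0(D)}$ independent of $\xi$, hence $\gU\in\cV$. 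The variational identity then comes from multiplying the pointwise equation \eqref{eq:elldiv} by a test function $v(\cdot,\xi)\in H^1_0(D)$, integrating by parts in $x$ using $\tilde a(\cdot,\xi)\in C^1$ and the vanishing trace, and then taking $\E_\xi$; the interchange of $\E_\xi$ with the $x$-integral is justified by the uniform-in-$\xi$ bounds and Fubini. By uniqueness of the weak solution in Lemma \ref{lem:sol-faible-ell}, $u=\gU$, hence $\gU^{N,P}:=u^{N,P}$ satisfies $\|\gU^{N,P}-\gU\|_\cV\to 0$ as $N,P\to\infty$.

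The main obstacle is the measurability-and-integrability bookkeeping in the passage from the pointwise-in-$\xi$ statements (Corollary \ref{cor:sol-classique}, elliptic regularity) to the tensor-space statement $\gU\in\cV$ with $\cA(\gU,\cdot)=F(\cdot)$: one must ensure the constants in the Schauder/$H^1$ estimates are genuinely uniform over $\xi$ (which follows from the uniformity built into (A3)--(A5) and (A7)--(A9), but deserves to be spelled out), and that $\xi\mapsto\gU(\cdot,\xi)$ is strongly measurable into $H^1_0(D)$ so that the $\cV$-norm and the expectations defining $\cA$, $F$ make sense. Everything else is a direct citation of Corollary \ref{cor:sol-classique} and Lemma \ref{lem:sol-faible-ell}.
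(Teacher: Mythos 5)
Your proposal is correct and follows essentially the same route as the paper's proof: the paper likewise checks (A7) from the boundedness of the coefficients on $\bar D$ and (A8) from (A4), establishes $\gU\in\cV$ via the uniform sup bound of Lemma \ref{coro:gU-L2} combined with a global gradient estimate (Theorem 6.6 of Gilbarg--Trudinger) with constants uniform in $\xi$, derives the variational identity by integrating by parts against test functions in $C^1_c(D)\otimes L^2(\Xi,\P_\xi)$ and a density argument, and concludes by Lemma \ref{lem:sol-faible-ell}. The only cosmetic difference is the order of the steps and your explicit appeal to uniqueness to identify $\gU$ with the weak solution, which the paper leaves implicit.
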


\begin{proof}
See the Appendix.
\end{proof}

%\begin{remark}
%We will sometimes not directly use Theorem \ref{thm:sumup-ell}.  For example in the forthcoming Subsection
%\ref{ssec:OU-U} this will be possible to rewrite the PDE \eqref{eq:Pb-ell} in the form \eqref{eq:elldiv} with $\tilde{b}=0$, and $\tilde{a}$ and 
%$\tilde{f}=\zeta$ defined by \eqref{eq:def-a-zeta}. This will allow to check the assumptions of Proposition \ref{prop:resul-gene-ell} in a different manner.
%\end{remark}

\subsubsection{Approximation of Sobol' indices}\label{sssec:sob}

We now discuss how to solve \eqref{eq:galell}, for fixed $N,P\in\N^*$ (we omit some superscripts $N,P$ in what follows). A function 
\begin{equation}
\label{eq:Uvect}
\gU^{N,P}=\sum_{j=1}^{N-1}\sum_{q=0}^PU_j^q\psi_q\phi_j\in\cV^{N,P}
\end{equation}
 satisfies \eqref{eq:galell}  if and only if
$$
\forall \,  1\leq i\leq N-1,\quad\forall \,  0\leq p\leq P,\quad\;\; \cA(\gU^{N,P},\psi_p\phi_i)=F(\psi_p\phi_i).$$
Therefore if and only if the block vector 
$$U=(U^0,\ldots,U^P)^T,\textup{ with } U^q=\left(U_1^q , \ldots , U_{N-1}^q\right)\in\R^{1\times(N-1)},\, 0\leq q\leq P$$ solves
\begin{equation}
\label{eq:systU-ell}
\mathbf{A}U=\mathbf{F}
\end{equation}
where
$\mathbf{A}=(\mathbf{A}_{pq})_{0\leq p,q\leq P}$
is the block matrix defined by 
$$\mathbf{A}_{pq}=\big(\cA(\psi_q\phi_j,\psi_p\phi_i)\big)_{1\leq i,j\leq N-1} \textup{ for any }0\leq p,q\leq P$$ and 
$\mathbf{F}=(\mathbf{F}_{0},\ldots,\mathbf{F}_{P})^T$ is the block vector defined by 
$$\mathbf{F}_{p}=\big(F(\psi_p\phi_1),\ldots,F(\psi_p\phi_{N-1})\big)\in \R^{1\times(N-1)} \textup{ for any } 0\leq p\leq P.$$

We thus have to solve \eqref{eq:systU-ell} and to recover $\gU^{N,P}$ by \eqref{eq:Uvect}, as it is usually done in finite elements methods.
%\begin{remark}
%Note that in the literature on the subject the first step is often a finite element discretization of \eqref{eq:elldiv} for any value of $\xi$ (see 
%\cite{nouy1}). Here this would lead to the linear system
%\begin{equation}
%\label{eq:AUxi}
%A(\xi)U(\xi)=\bar{F}(\xi)
%\end{equation}
%with $U(\xi)=(U_1(\xi),\ldots,U_{N-1}(\xi))^T$  to be searched in $\R^{N-1}$, $\bar{F}(\xi)$  the vector $\big(\int_D\tilde{f}(\cdot,\xi)\phi_i\big)_{1\leq i\leq N-1}$
%and $A(\xi)$  the stiffness matrix 
%$\big( \int_D(\nabla\phi_j(\cdot,\xi))^T\tilde{a}(\cdot,\xi)\nabla \phi_i(\cdot,\xi)
%+ \int_D(\nabla \phi_j(\cdot,\xi))^T\tilde{b}(\cdot,\xi)\phi_i(\cdot,\xi) \big)_{1\leq i,j\leq N-1}$. The second step would be then to project
%\eqref{eq:AUxi} on $\cS^P$. That is to say, denoting $\{e_i\}_{i=1}^{N-1}$ the canonical basis of $\R^{N-1}$, and assuming 
%$U_j(\xi)=\sum_{q=0}^PU_j^q\psi_q(\xi)$, $1\leq j\leq N-1$, to solve
%$$
%\forall 1\leq i\leq N-1,\quad\forall 0\leq p\leq P,\quad\;\;\E_\xi\big[ \psi_p(\xi) e_i^T A(\xi)U(\xi) \big]=\E_\xi\big[\psi_p(\xi)e_i^T\bar{F}(\xi)\big].$$
%But using Fubini's theorem and linear algebra arguments this would lead exactly to the system \eqref{eq:systU-ell}.
%\end{remark}

\noindent
 Assume then we have got an approximation $\gU^{N,P}(x,\xi)=\sum_{q=0}^P\gU^N_q(x)\psi_q(\xi)$ of  the random variable~$\gU(x,\xi)$, for $x\in D$
 (here we have denoted 
 $\gU^N_q(x)=\sum_{j=1}^{N-1}U^q_j\phi_j(x)$, for any $0\leq q\leq P$). 
 Then, thanks to the orthonormality of the basis $\{\psi_p\}_{p=0}^P$, the Sobol' index $S_I(\gU)$ can be approximated using Parseval identity~by
 \begin{equation}
 \label{eq:pars-U}
 \frac{ \sum_{q\in K_I} [\gU^N_q(x)]^2 }{ \sum_{q=1}^P[\gU^N_q(x)]^2}
\end{equation}
where $K_I=\{p\in\{1,\ldots,P\}\,:\;\psi_p(\xi)=\psi_p(\xi_I)\,\}$.

\subsection{Computation of $S_I(\gV)$  using the parabolic stochastic PDE}
\label{ssec:para}

The structure of this subsection is similar to the structure of Subsection \ref{ssec:ell} about the elliptic case. Note that throughout the subsection we use the  notation already introduced in Subsubsection \ref{sssec:not-sobolev}. In Subsubsection \ref{sssec:div-form-para} we reformulate \eqref{eq:Pb-para} in divergence form. In Subsubsection \ref{sssec:galer-para} we introduce the notion of weak solution at the stochastic level of the parabolic PDE in divergence form, and explain how to get an approximation 
$(\gV^{m,N,P})_m$ of this weak solution. Here we have to perform a discretization in the space variable $x$,  the uncertainty parameter $\xi$, but also the time variable $t$. As again our quantity of interest $\gV$ turns out to be a weak solution of the stochastic PDE of interest, we thus get an approximation of it that allows the computation on approximated Sobol' indices.

\subsubsection{Parabolic PDE in divergence form}
\label{sssec:div-form-para}

As in Subsection \ref{ssec:ell} we first rewrite \eqref{eq:Pb-para} in divergence form. We assume again that for a.e. value of $\xi$ the function $a(\cdot,\xi)$ is of class $C^1(D;\R^{d\times d})$ and define the coefficients $\tilde{a}$, $\tilde{b}$ as in \eqref{eq:deftilde}.
It is then clear that \eqref{eq:Pb-para} is equivalent to

\begin{equation}
\label{eq:Pb-para-div}
\left\{\begin{array}{rll}
\partial_t v(t,x,\xi)&=& \nabla\cdot \big(\tilde{a}(x,\xi)\nabla v(t,x,\xi)\big)-(\nabla v(t,x,\xi))^T\tilde{b}(x,\xi),\,\forall (t,x)\in(0,T]\times D\\
\\
v(t,x,\xi)&=&0,\quad\forall (t,x)\in (0,T]\times\partial D\\
\\
v(0,x,\xi)&=&f(x),\quad\forall x\in\bar{D}.\\
\end{array}
\right.
\end{equation}

\subsubsection{Weak solutions at the stochastic level of \eqref{eq:Pb-para-div} and their approximation; link with the quantity of interest $\gV$}
\label{sssec:galer-para}

In the same spirit as in Subsubsection \ref{sssec:weak} we will present the notion of weak solution of \eqref{eq:Pb-para-div}
at the stochastic level and of its approximation. 
The weak solution will be searched for in the space 
$$
\Big(
L^2(0,T;H^1_0(D))\cap H^1(0,T;H^{-1}(D))
\Big)\otimes L^2(\Xi,\P_\xi)$$
Here the space $L^2(0,T;H^1_0(D))\cap H^1(0,T;H^{-1}(D))$ denotes the space of those functions $v$ such that~$v$ is in $L^2(0,T;L^2(D))$ (i.e. $\int_0^T\int_D v^2<\infty$), the $\partial_{x_i}v$'s, for $1\leq i\leq d$, are in
 $L^2(0,T;L^2(D))$ too, and $\partial_t v$ is in $L^2(0,T;H^{-1}(D))$ (the derivatives are understood in the distribution sense). 
 Note that we have
 $$
 \Big(
L^2(0,T;H^1_0(D))\cap H^1(0,T;H^{-1}(D))
\Big)\otimes L^2(\Xi,\P_\xi)=L^2(0,T;\cV)\cap H^1(0;T,\cV').$$
We start with the two following results (in Lemma \ref{lem:sol-semifaible-para} the form $\cA$ is the one defined in Subsection \ref{ssec:ell}).

\begin{lemma}
\label{lem:sol-semifaible-para}
Assume (A7)-(A8) and that $\tilde{b}$ is bounded. Assume $f\in L^2(D)$. There exists a unique weak solution $v$ at the stochastic level to \eqref{eq:Pb-para-div}, in the sense that 
$v$ belongs to 
$L^2(0,T;\cV)\cap H^1(0;T,\cV')\cap C(0,T;\cH)$
 and satisfies
\begin{equation}
\label{eq:sol-semifaible-para}
\forall w\in \cV,\, \langle \partial_t v(t,\cdot,\cdot)\,,\,w\rangle_\cH+\cA\big(v(t,\cdot,\cdot)\,,\,w\big)=0\text{ for a.e. }t\in[0,T]\text{ and }
v(0,\cdot)=f.
\end{equation}
\end{lemma}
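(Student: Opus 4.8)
The plan is to establish existence, uniqueness, and the asserted regularity of the weak solution by the standard Galerkin (Faedo–Galerkin) method adapted to the tensor setting, followed by the identification of this weak solution with the classical solution from Theorem~\ref{thm:feyn-V}. Since $\cV = H^1_0(D)\otimes L^2(\Xi,\P_\xi)$ with the Gelfand triple $\cV\hookrightarrow\cH\hookrightarrow\cV'$, the abstract parabolic theory (e.g.\ Lions–Magenes / Dautray–Lions) applies once I verify that $\cA$ is continuous and coercive on $\cV$ modulo a shift. Continuity of $\cA$ follows from (A7) (boundedness of $\tilde a$) and the boundedness of $\tilde b$ together with Cauchy–Schwarz in both the $x$ and $\xi$ integrals; here I do not need (A9), only a G\r{a}rding-type inequality. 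Indeed, using (A8) and $|\tilde b|$ bounded, for $u\in\cV$,
\begin{equation*}
\cA(u,u)\;\geq\;\tilde\lambda\,\E_\xi\!\int_D|\nabla u|^2-\|\tilde b\|_\infty\,\E_\xi\!\int_D|\nabla u|\,|u|\;\geq\;\frac{\tilde\lambda}{2}\,\|u\|_\cV^2\;-\;C\,\|u\|_\cH^2
\end{equation*}
by Young's inequality and the norm equivalence \eqref{eq:equiv-norm}. So $\cA+\mu\langle\cdot,\cdot\rangle_\cH$ is coercive on $\cV$ for $\mu$ large, which is exactly what the abstract theorem requires.

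Concretely, I would carry out the following steps. First, pick a Hilbert basis of $\cV$ adapted to the tensor structure — for instance $\{\phi_i\otimes\Psi_q\}$ with $\{\phi_i\}$ a basis of $H^1_0(D)$ and $\{\Psi_q\}$ the orthonormal basis of $L^2(\Xi,\P_\xi)$ from (A1) — and set up the finite-dimensional Galerkin ODE system for the coefficients, whose well-posedness is immediate since $\cA$ restricted to each finite-dimensional space is a bilinear form and the mass matrix is invertible. Second, derive the a priori energy estimate: testing the Galerkin equation with $v_k(t)$ itself, integrating in $t$, and using the G\r{a}rding inequality plus Gronwall, gives a bound for $v_k$ in $L^\infty(0,T;\cH)\cap L^2(0,T;\cV)$ uniform in $k$, depending only on $\|f\|_{L^2(D)}$ (note $f\in L^2(D)$ embeds into $\cH$ as a $\xi$-independent element). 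Third, extract weakly/weak-$*$ convergent subsequences; pass to the limit in the (linear) Galerkin equations to obtain $v\in L^2(0,T;\cV)$ satisfying \eqref{eq:sol-semifaible-para} in the distributional sense in $t$, then read off $\partial_t v\in L^2(0,T;\cV')$ from the equation itself (this uses continuity of $w\mapsto\cA(v(t,\cdot),w)$ on $\cV$), which by the Lions–Magenes trace/interpolation lemma yields $v\in C(0,T;\cH)$ and makes the initial condition $v(0,\cdot)=f$ meaningful; a standard argument with test functions vanishing at $t=T$ identifies the initial datum. Fourth, uniqueness: the difference of two solutions solves the homogeneous problem with zero initial data; testing with the solution and applying Gronwall forces it to vanish in $L^2(0,T;\cV)\cap C(0,T;\cH)$.

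Finally, to link with $\gV$, observe that Theorem~\ref{thm:feyn-V} (under (A2)–(A6)) gives the classical solution $v(t,x,\xi)=\gV(t,x,\xi)$, which is $C^{1,2}([0,T]\times\bar D)$ for a.e.\ $\xi$, hence in particular lies in $L^2(0,T;\cV)\cap H^1(0,T;\cV')$ after checking the relevant integrability in $\xi$ (which follows from the bounds in (A6), Lemma~\ref{lem:fX-Lq}, and the Aronson-type bound \eqref{eq:aronson}), and an integration by parts in $x$ shows a classical solution of \eqref{eq:Pb-para} is a weak solution of \eqref{eq:Pb-para-div}; by uniqueness it coincides with the Galerkin limit. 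I expect the main obstacle to be the careful bookkeeping at the tensor level: one must be sure that the abstract parabolic results (stated for an arbitrary Gelfand triple) apply verbatim with $\cH,\cV,\cV'$ in place of the usual $L^2,H^1_0,H^{-1}$, in particular that the time-regularity/trace lemma yielding $v\in C(0,T;\cH)$ does not secretly use separability or structure specific to the scalar case — it does not, but this needs to be stated — and that the measurability in $\xi$ of all the objects (the Galerkin approximants, the limit) is preserved under the weak limits, which is where the product structure $\P=\W\otimes\P_\xi$ and Fubini are invoked.
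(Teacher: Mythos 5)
Your proof takes essentially the same route as the paper: both reduce the statement to the abstract parabolic theory on the Gelfand triple $\cV\hookrightarrow\cH\hookrightarrow\cV'$ by checking continuity of $\cA$ (from (A7) and the boundedness of $\tilde{b}$) and a G\aa{}rding inequality making $\cA_\mu=\cA+\mu\langle\cdot,\cdot\rangle_\cH$ coercive for $\mu$ large enough. The only difference is that the paper simply cites Lions--Magenes and Ern--Guermond for the conclusion where you unfold the standard Faedo--Galerkin argument, and your closing identification with $\gV$ is not needed for this lemma (it is the content of the separate Lemma~\ref{lem:sol-classique-semifaible-para}).
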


\begin{proof}
See the Appendix.\end{proof}

\begin{lemma}
\label{lem:sol-classique-semifaible-para}
Assume $a(\cdot,\xi)$ belongs to $C^1(D;\R^{d \times d})$ for a.e. value of $\xi$ and $\tilde{a}$ and $\tilde{b}$  are defined by \eqref{eq:deftilde}.
Assume (A2)-(A6). Then the quantity of interest $\gV(t,x,\xi)$ is a weak solution to~\eqref{eq:Pb-para-div}.
\end{lemma}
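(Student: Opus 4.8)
The plan is to start from what Theorem~\ref{thm:feyn-V} and Corollary~\ref{cor:sol-classique} already give us — that for almost every value of $\xi$ the function $v(\cdot,\cdot,\xi):=\gV(\cdot,\cdot,\xi)$ is a \emph{classical} solution of the parabolic problem — and to upgrade this into the assertion that $\gV$ is a weak solution at the stochastic level in the sense of \eqref{eq:sol-semifaible-para}. Concretely, I would first invoke the equivalence between \eqref{eq:Pb-para} and its divergence form \eqref{eq:Pb-para-div} (legitimate since $a(\cdot,\xi)\in C^1(D;\R^{d\times d})$ for a.e.\ $\xi$ and $\tilde a,\tilde b$ are defined by \eqref{eq:deftilde}) to obtain a full-$\P_\xi$-measure set $\Xi_0\subset\Xi$ such that, for every $z\in\Xi_0$, $(t,x)\mapsto v(t,x,z)$ lies in $C^{1,2}([0,T]\times\bar D)$ and solves \eqref{eq:Pb-para-div} pointwise, with $v(t,\cdot,z)\equiv 0$ on $\partial D$ and $v(0,\cdot,z)=f$. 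Because $\bar D$ is compact, $v(\cdot,\cdot,z)$ and its derivatives (order $1$ in $t$, order $2$ in $x$) are bounded on $[0,T]\times\bar D$, so for each fixed $z\in\Xi_0$ the slice $v(\cdot,\cdot,z)$ belongs to the deterministic space $L^2(0,T;H^1_0(D))\cap H^1(0,T;H^{-1}(D))\cap C(0,T;L^2(D))$.

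Next I would derive the deterministic variational identity and then lift it. Fix $z\in\Xi_0$ and $w_0\in H^1_0(D)$, multiply the first line of \eqref{eq:Pb-para-div} by $w_0$, integrate over $D$, and apply Green's formula to $\int_D w_0\,\nabla\cdot(\tilde a(\cdot,z)\nabla v)$, the boundary term vanishing since $w_0\in H^1_0(D)$ and using the symmetry of $\tilde a=a/2$; this gives, for every $t\in(0,T]$, $\langle\partial_t v(t,\cdot,z),w_0\rangle_{H^{-1},H^1_0}+\cA_z\big(v(t,\cdot,z),w_0\big)=0$, where $\cA_z(u_0,w_0)=\int_D(\nabla u_0)^T\tilde a(\cdot,z)\nabla w_0+\int_D(\nabla u_0)^T\tilde b(\cdot,z)\,w_0$ is the $\xi=z$ slice of the bilinear form $\cA$ of \eqref{eq:def-A}. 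To pass to the stochastic level, take any test function $w\in\cV$, apply this identity with $w_0=w(\cdot,z)\in H^1_0(D)$ (valid for a.e.\ $z$), and integrate over $\Xi$ against $\P_\xi$; by Fubini — legitimate thanks to the uniform bounds obtained below — this yields exactly $\langle\partial_t v(t),w\rangle_\cH+\cA\big(v(t),w\big)=0$ for a.e.\ $t\in[0,T]$, together with $v(0,\cdot)=f$ in $\cH$.

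The remaining work is to check that $\gV=v$ actually lies in $L^2(0,T;\cV)\cap H^1(0,T;\cV')\cap C(0,T;\cH)$, i.e.\ that the $\P_\xi$-expectations of the squared norms are finite (measurability of $z\mapsto v(\cdot,\cdot,z)$ into these separable spaces following from the joint measurability of $\gV$ and its continuity in $(t,x)$). Here the probabilistic representation $\gV(t,x,z)=\E^x[f(X_t)\1_{t\le\tau_D}\mid\xi=z]$ provides the free uniform bound $|\gV(t,x,z)|\le\sup_{\bar D}|f|<\infty$, which already gives $v\in C(0,T;\cH)$ and the $L^2(0,T;L^2(D))$ part, uniformly in $z$. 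For the gradient I would use the standard parabolic energy estimate: testing \eqref{eq:Pb-para-div} with $v(t,\cdot,z)$ itself, using the uniform ellipticity of $\tilde a$ inherited from (A4) and the boundedness of $\tilde a,\tilde b$ on $\bar D\times\Xi$ (the structural conditions (A7)--(A8) on $\tilde a$ and boundedness of $\tilde b$, exactly as in Lemma~\ref{lem:sol-semifaible-para}, hold here for the coefficients restricted to the bounded set $D$, possibly after localizing $\sigma,b$ as in Remark~\ref{rem:A6}), one obtains $\int_0^T\|\nabla v(\cdot,\cdot,z)\|^2_{L^2(D)}\,dt\le C\big(\|f\|^2_{L^\infty(\bar D)}+\|f\|^2_{L^2(D)}\big)$ with $C$ independent of $z$; reinjecting into the equation gives $\|\partial_t v(t,\cdot,z)\|_{H^{-1}(D)}\le C'\|\nabla v(t,\cdot,z)\|_{L^2(D)}$, hence a uniform $L^2(0,T;\cV')$ bound on $\partial_t v$. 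Taking $\E_\xi$ of these bounds closes the argument.

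The main obstacle is precisely this last step: producing energy estimates uniform in $\xi$ (so that the $\xi$-integrals of the norms converge and the Fubini interchange in the derivation of \eqref{eq:sol-semifaible-para} is licit), given that $f$ is only assumed bounded rather than, say, in $H^1_0(D)$. What makes it go through is that the Feynman--Kac formula supplies an $L^\infty$ bound on $\gV$ itself that is uniform in $\xi$, and this bound is exactly what feeds the energy estimate for $\nabla v$; the rest is bookkeeping with Green's formula, density of $C^\infty_c(D)$ in $H^1_0(D)$, and the definitions of the tensor spaces.
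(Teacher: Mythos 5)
Your proposal is correct and follows the same skeleton as the paper's proof (classical solution from Corollary~\ref{cor:sol-classique}, slice-wise variational identity obtained by multiplying the divergence-form equation by a test function and integrating by parts, then integration over $\Xi$ against $\P_\xi$, with the whole argument hinging on a priori bounds that are uniform in $\xi$), but you obtain those uniform bounds by a genuinely different mechanism. The paper invokes Theorem 5.14 of Lieberman's book, i.e.\ parabolic Schauder-type estimates up to the boundary, to get $\sup_{[0,T]\times\bar D}\big(|\gV|+|\partial_t\gV|+\sum_i|\partial_{x_i}\gV|\big)\leq C$ with $C$ depending only on $M$, $|D|$, $\lambda$, $d$ and not on $\xi$ (the point being that the Lipschitz constants of $a,b$ and the $H_{2+\alpha}$ norm of $f$ are uniform in $\xi$); membership of $\gV$ in $L^2(0,T;\cV)\cap H^1(0,T;\cV')\cap C([0,T];\cH)$ then follows from these pointwise bounds. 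You instead use the probabilistic representation $\gV(t,x,\xi)=\E^x[f(X_t)\1_{t\leq\tau_D}\,|\,\xi]$ to get the free maximum-principle bound $|\gV|\leq\sup_{\bar D}|f|$ uniformly in $\xi$, and then run the classical parabolic energy estimate (testing the equation with $v$ itself, using the uniform ellipticity inherited from (A4) and the uniform boundedness of $\tilde a,\tilde b$ on $\bar D\times\Xi$) to control $\int_0^T\|\nabla v(\cdot,\cdot,z)\|_{L^2(D)}^2\,dt$ and hence $\|\partial_t v\|_{L^2(0,T;H^{-1}(D))}$ uniformly in $z$. Both routes are legitimate: the paper's gives stronger (sup-norm) control on the time and space derivatives at the price of citing a deep regularity theorem, while yours is more elementary and self-contained but delivers only the $L^2$-in-time control — which is, however, exactly what membership in $L^2(0,T;\cV)\cap H^1(0,T;\cV')$ requires, so nothing is lost for the purpose of this lemma. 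The one point worth making explicit in your write-up is that the uniform boundedness of $\tilde b=\partial\tilde a-b$ requires the $x$-Lipschitz constant of $a=\sigma\sigma^T$ to be uniform in $\xi$, which does follow from (A3) together with the boundedness of $D$ (this is the same observation the paper makes in the proof of Theorem~\ref{thm:sumup-ell}); no localization of $\sigma,b$ outside $D$ as in Remark~\ref{rem:A6} is actually needed here since everything takes place on $\bar D$.
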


\begin{proof}
See the Appendix.
\end{proof}

\begin{remark}
Note that in the above results we have dropped Assumption (A9) in itself and we only have assumed that $\tilde{b}$ is bounded. This because in the parabolic case we do not need the form $\cA$ to be coercive in itself; it suffices to get the coercivity of 
$(u,v)\mapsto\cA(u,v)+\mu\langle u,v\rangle_\cH$ for $\mu$ large enough (see the proof of Lemma \ref{lem:sol-semifaible-para}).
\end{remark}

\paragraph{Galerkin discretisation}
We now aim at approximating the weak solution of \eqref{eq:Pb-para-div}. 
We first consider a Galerkin type discretisation of \eqref{eq:sol-semifaible-para} with respect to the space variable and the uncertain parameter, similar to the one we have used for the elliptic case (the time-discretisation will be treated in a second time). 
\vspace{0.2cm}
We consider the problem of finding $v^{N,P}$ in 
$C^1(0,T;\cV^{N,P})$ satisfying

\begin{equation}
\label{eq:semidiscrete}
\left\{\begin{array}{l}
\forall w\in \cV^{N,P},\,\partial_t v^{N,P}(t,\cdot,\cdot)\,,\,w\rangle_\cH+\cA\big(v^{N,P}(t,\cdot,\cdot)\,,\,w\big)=0\text{ for a.e. }t\in[0,T],\\
v^{N,P}(0,\cdot)=f^{N,P}.
\end{array}
\right.
\end{equation}
Here $f^{N,P}$ is an element of $\cV^{N,P}$, and the sequence $(f^{N,P})$ satisfies 
\begin{equation}
\label{eq:fn-to-f}
||f^{N,P}-f||_\cH\xrightarrow[N\to\infty,P\to\infty]{}0.
\end{equation}
Let us now rewrite \eqref{eq:semidiscrete} in matrix form. We have
 $$v^{N,P}(t,x,\xi)=
\sum_{j=1}^{N-1}\sum_{q=0}^PV^q_j(t)\psi_q(\xi)\phi_j(x)$$
and denote $V(t)$ the block vector 
$$(V^0(t),\ldots,V^P(t))^T, \textup{with }V^q(t)=\left(V_1^q(t) , \ldots , V_{N-1}^q(t)\right)\in\R^{1\times(N-1)}, \,0\leq q\leq P.$$
We denote $\mathbf{f}=(\mathbf{f}_0,\ldots,\mathbf{f}_P)^T$  the block vector defined
by 
$$\mathbf{f}_p=(f^{N,P}_{1,p},\ldots,f^{N,P}_{N-1,p}), \,0\leq p\leq P, \textup{ with }f^{N,P}=\sum_{i=1}^{N-1}\sum_{p=0}^Pf^{N,P}_{i,p}\psi_p\phi_i.$$ 
Using then the block matrix 
$\mathbf{A}$ defined in Subsubsection \ref{sssec:sob} and introducing $\mathbf{M}=(\mathbf{M}_{pq})_{0\leq p,q\leq P}$ the block matrix defined by 
$\mathbf{M}_{pq}=\big(\langle\psi_q\phi_j,\psi_p\phi_i\rangle_\cH\big)_{1\leq i,j\leq N-1}$ for any $0\leq p,q\leq P$, it is clear that
\eqref{eq:semidiscrete} is equivalent to
\begin{equation}
\label{eq:syst-diff}
\mathbf{M}V'(t)+\mathbf{A}V(t)=0, \quad V(0)=\mathbf{f}.
\end{equation}
Equation \eqref{eq:syst-diff} is a system of Ordinary Differential Equations (ODE). Note that we know by the Cauchy-Lipschitz theorem  that this system has a solution in $C^1(0,T;\R^{(N-1)\times(P+1)})$ \citep[see Theorem VII.3 in ][]{brezis};  we use here the fact that 
$\mathbf{M}$ is invertible and constant). It ensures the existence of $v^{N,P}$ in~$C^1(0,T;\cV^{N,P})$ that solves \eqref{eq:semidiscrete}.

Under for example (A7)-(A8) and $\tilde{b}\equiv 0$ it can be proved \citep[see Corollary 7.4-1 in ][]{raviart} that $||v^{N,P}(t,\cdot,\cdot)-v(t,\cdot,\cdot)||_\cH\to 0$, as $N\to\infty,P\to\infty$, for any $t\in[0,T]$, where $v$ is the weak solution to~\eqref{eq:Pb-para-div}. Thus $v^{N,P}$ is an approximation of $v$. However we will not access to $v^{N,P}$ itself but to its approximation through a time finite difference scheme for \eqref{eq:syst-diff}.

\vspace{0.2cm}

Indeed, in a second time, we introduce a $\theta$-scheme for \eqref{eq:syst-diff} ($\theta\in[0,1]$). We choose $M\in\N^*$ and denote 
$\Delta t=\frac T M$ and $t_m=m\Delta t$ for any $0\leq m\leq M$. For any $0\leq m\leq M$, define $V[m] \in \mathbb{R}^{(N-1)\times(P+1)}$ as the solution of:
$$
\frac{1}{\Delta t}\mathbf{M}(V[m+1]-V[m])+\mathbf{A}(\theta V[m+1]+(1-\theta)V[m])=0,\quad V[0]=\mathbf{f},
$$
which can be written as:
\begin{equation}
\label{eq:theta-scheme}
(\mathbf{M}+\theta\Delta t\mathbf{A})V[m+1]=(\mathbf{M}-(1-\theta)\Delta t\mathbf{A})V[m],
\quad V[0]=\mathbf{f}.
\end{equation}
We then have the following result:

\begin{lemma}
\label{lem:conv-schema-para}
Assume (A7)-(A8) and that $\tilde{b}$ is bounded. Assume $f\in H^1_0(D)$
and that we have a sequence of functions
$f^{N,P}\in\cV^{N,P}$  satisfying $||f^{N,P}-f||_\cH\to 0$ as $N,P\to\infty$.

Consider then $v$ the weak solution of~\eqref{eq:Pb-para-div}
and $v^{m,N,P}=\sum_{j=1}^{N-1}\sum_{q=0}^PV^q_j[m]\psi_q\phi_j$, where the $V^q_j[m]$'s are computed by
\eqref{eq:theta-scheme}.

Assume $\theta=1/2$ and $u\in C^1(0,T;\cV)\cap C^3(0,T;\cH)$. Then
$$
\sup_{0\leq m\leq M}||v^{m,N,P}-v(t_m,\cdot,\cdot)||_\cH\xrightarrow[M\to\infty,N\to\infty,P\to\infty]{}0.
$$
\end{lemma}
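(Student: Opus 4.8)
Throughout, read $v$ for the symbol ``$u$'' appearing in the hypothesis and in the conclusion (the only relevant object is the weak solution $v$ of~\eqref{eq:Pb-para-div}). The plan is to run the classical Ritz--projection error analysis of the Crank--Nicolson scheme, adapted to the fact that $\cA$ need not be coercive. As observed in the remark following Lemma~\ref{lem:sol-semifaible-para}, under (A7)--(A8) with $\tilde b$ bounded there are $\mu>0$ and $\kappa>0$ such that the shifted form $\cA_\mu(u,w):=\cA(u,w)+\mu\langle u,w\rangle_\cH$ satisfies $\cA_\mu(w,w)\geq\kappa\|w\|_\cV^2$ for all $w\in\cV$, and $\cA$ (hence $\cA_\mu$) is continuous on $\cV\times\cV$. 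For fixed $N,P$ I would introduce the elliptic (Ritz) projection $R_{N,P}\colon\cV\to\cV^{N,P}$ defined by $\cA_\mu(R_{N,P}w-w,\chi)=0$ for every $\chi\in\cV^{N,P}$ (well posed by Lax--Milgram). Céa's lemma bounds $\|R_{N,P}w-w\|_\cV$ by a fixed multiple of $\mathrm{dist}_\cV(w,\cV^{N,P})$; since $w\mapsto\mathrm{dist}_\cV(w,\cV^{N,P})$ is $1$-Lipschitz and tends to $0$ on the dense set $\bigcup_{N,P}\cV^{N,P}$ (density shown in Subsubsection~\ref{sssec:not-sobolev}), one gets $\sup_{w\in K}\|R_{N,P}w-w\|_\cV\to0$ as $N,P\to\infty$ for every compact $K\subset\cV$.

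I would then split the error as $v^{m,N,P}-v(t_m)=\theta_m+\rho_m$ with $\rho_m:=R_{N,P}v(t_m)-v(t_m)$ and $\theta_m:=v^{m,N,P}-R_{N,P}v(t_m)\in\cV^{N,P}$. Since $v\in C^1(0,T;\cV)$, the sets $\{v(t):t\in[0,T]\}$ and $\{\partial_tv(t):t\in[0,T]\}$ are compact in $\cV$; because $\partial_t\rho=R_{N,P}\partial_tv-\partial_tv$ and $\cV\hookrightarrow\cH$ continuously, it follows from the previous paragraph that both $\sup_{0\leq m\leq M}\|\rho_m\|_\cH$ and $\sup_{t\in[0,T]}\|\partial_t\rho(t)\|_\cH$ tend to $0$ as $N,P\to\infty$. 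So it only remains to control $\theta_m$.

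Writing \eqref{eq:theta-scheme} with $\theta=1/2$ in variational form (its mass/stiffness matrices $\mathbf M,\mathbf A$ are those of $\langle\cdot,\cdot\rangle_\cH$ and $\cA$ on $\cV^{N,P}$), subtracting the weak equation \eqref{eq:sol-semifaible-para} evaluated at the midpoint $t_{m+1/2}=(t_m+t_{m+1})/2$, and using the defining identity of $R_{N,P}$, one obtains for all $w\in\cV^{N,P}$
\[
\Big\langle\frac{\theta_{m+1}-\theta_m}{\Delta t},w\Big\rangle_\cH+\cA_\mu\Big(\frac{\theta_{m+1}+\theta_m}{2},w\Big)=\langle g_m,w\rangle_\cH,
\]
where $g_m$ gathers: $-\tfrac{1}{\Delta t}\int_{t_m}^{t_{m+1}}\partial_t\rho$, the terms $\tfrac{\mu}{2}(\theta_{m+1}+\theta_m)$ and $-\tfrac{\mu}{2}(\rho(t_{m+1})+\rho(t_m))$ produced when passing from $\cA$ to $\cA_\mu$ (the $\rho$-part using the Ritz property), and the two Crank--Nicolson consistency terms $\tfrac{1}{\Delta t}(v(t_{m+1})-v(t_m))-\partial_tv(t_{m+1/2})$ and $\cA\big(\tfrac12(v(t_{m+1})+v(t_m))-v(t_{m+1/2}),\cdot\big)$. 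The point that makes the stated regularity sufficient is that, using $\cA(v(s),w)=-\langle\partial_tv(s),w\rangle_\cH$ (valid for all $s,w$ by \eqref{eq:sol-semifaible-para} and continuity), the second consistency term equals $-\langle\tfrac12(\partial_tv(t_{m+1})+\partial_tv(t_m))-\partial_tv(t_{m+1/2}),w\rangle_\cH$; hence $g_m$ is entirely controlled through the $\cH$-pairing, and a Taylor expansion gives $\|g_m\|_\cH\leq\sup_t\|\partial_t\rho(t)\|_\cH+c\mu\big(\|\theta_m\|_\cH+\|\theta_{m+1}\|_\cH+\sup_t\|\rho(t)\|_\cH\big)+c\,\Delta t^2\,\|\partial_t^3v\|_{C(0,T;\cH)}$, so that only $v\in C^1(0,T;\cV)\cap C^3(0,T;\cH)$ enters (no $\cV$-valued second time derivative is needed).

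Finally I would test the discrete equation with $w=\tfrac12(\theta_{m+1}+\theta_m)$, use the identity $\langle\theta_{m+1}-\theta_m,\theta_{m+1}+\theta_m\rangle_\cH=\|\theta_{m+1}\|_\cH^2-\|\theta_m\|_\cH^2$ and the coercivity of $\cA_\mu$ to drop the nonnegative $\cV$-term, and apply the discrete Grönwall lemma; for $\Delta t$ small enough (no relation to $N,P$, the scheme being unconditionally stable) this yields $\max_{0\leq m\leq M}\|\theta_m\|_\cH^2\leq e^{CT}\big(\|\theta_0\|_\cH^2+CT\sup_t\|\partial_t\rho(t)\|_\cH^2+CT\sup_t\|\rho(t)\|_\cH^2+CT\,\Delta t^4\|\partial_t^3v\|^2_{C(0,T;\cH)}\big)$. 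Since $\theta_0=f^{N,P}-R_{N,P}f$, we have $\|\theta_0\|_\cH\leq\|f^{N,P}-f\|_\cH+\|f-R_{N,P}f\|_\cH\to0$ by \eqref{eq:fn-to-f} and because $f\in H^1_0(D)\subset\cV$; hence, letting $M,N,P\to\infty$ and combining with $\sup_m\|\rho_m\|_\cH\to0$, we conclude. The step I expect to be the main obstacle is precisely the bookkeeping for $g_m$: neutralising the non-coercive drift via the $\mu$-shift consistently between the Ritz projection and the energy estimate, and recasting the Crank--Nicolson truncation error so that it is governed by the sole regularity $v\in C^1(0,T;\cV)\cap C^3(0,T;\cH)$; the remaining discrete Grönwall argument is routine.
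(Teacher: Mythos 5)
Your proposal is correct and follows essentially the same route as the paper's proof: an elliptic (Ritz) projection onto $\cV^{N,P}$ controlled by C\'ea's lemma and the density of $\bigcup_{N,P}\cV^{N,P}$, the splitting of the error into a projection part and a discrete part, a discrete energy (stability) estimate obtained by testing with the Crank--Nicolson average, and a consistency estimate that converts the $\cA$-term into an $\cH$-pairing via the weak equation so that only $v\in C^1(0,T;\cV)\cap C^3(0,T;\cH)$ is needed. The one real difference is the treatment of the possible non-coercivity of $\cA$: the paper changes variables $v(t,\cdot)=e^{\mu t}v_\mu(t,\cdot)$ so as to work with a genuinely coercive form and then obtains stability by a pure telescoping sum, whereas you keep the $\mu$-shift inside the discrete estimate and close it with a discrete Gr\"onwall inequality under a harmless smallness condition on $\Delta t$ independent of $N,P$; both devices are standard and valid.
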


\begin{proof}
See the Appendix.
\end{proof}

\begin{remark}
\label{rem:crank}
In fact the scheme \eqref{eq:theta-scheme} is unconditionally stable for $\theta\in[\frac 1 2,1]$.  If
$\theta=1/2$ and $u\in C^1(0,T;\cV)\cap C^3(0,T;\cH)$ it is consistent and thus convergent (Lax principle), with order  $(\Delta t)^2$ in time. See the proof 
of Lemma~\ref{lem:conv-schema-para}
in the Appendix for details. For $\theta=1/2$, this scheme is known as the Crank-Nicholson scheme.
\end{remark}

%In order to use the preceding results to get an approximation of the quantity of interest $\gV$, it remains to choose $f^{N,P}$ that satisfies \eqref{eq:fn-to-f}. If $f\in C^1(\bar{D})$ this can be done by choosing $f^{N,P}$ as the $\cV^{N,P}$-interpolation of
%$f$ (see \cite{raviart} Equation (5.1-26) for some order of convergence). To sum up we have got the following Theorem.

Gathering the above results we are led to the following theorem, which provides the metamodel for~$\gV$.

\begin{theorem}
\label{thm:sumup-para}
Assume $a(\cdot,\xi)$ belongs to $C^1(D;\R^{d \times d})$ for a.e. value of $\xi$ and that $\tilde{a}$ and $\tilde{b}$  are defined by \eqref{eq:deftilde}.
Assume (A2)-(A5) and that $f$ satisfies (A6). 

Let the sequence $f^{N,P}$ be defined as in Lemma \ref{lem:conv-schema-para}. Choose $M\in\N^*$ and compute 
$\Big(\gV^{m,N,P}=\sum_{j=1}^{N-1}\sum_{q=0}^PV^q_j[m]\psi_q\phi_j\Big)_{0\leq m\leq M}$ by solving
\eqref{eq:theta-scheme} with $\theta=1/2$.

If the quantity of interest $\gV(t,x,\xi)$ is in $C^1(0,T;\cV)\cap C^3(0,T;\cH)$ it can be approximated by $(\gV^{m,N,P}(x,\xi))_{0\leq m\leq M}$ in the sense that
$$
\sup_{0\leq m\leq M}||\gV^{m,N,P}-\gV(t_m,\cdot,\cdot)||_\cH\xrightarrow[M\to\infty,N\to\infty,P\to\infty]{}0.
$$
\end{theorem}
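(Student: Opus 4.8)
The plan is to assemble Theorem~\ref{thm:sumup-para} from the pieces already established, exactly as was done for the elliptic case in Theorem~\ref{thm:sumup-ell}. First I would invoke Lemma~\ref{lem:sol-classique-semifaible-para}: under the stated assumptions (A2)--(A6) and the $C^1$ regularity of $a(\cdot,\xi)$, the quantity of interest $\gV(t,x,\xi)$ is a weak solution at the stochastic level of the parabolic PDE in divergence form~\eqref{eq:Pb-para-div}, in the sense of~\eqref{eq:sol-semifaible-para}. Note that (A6) in particular guarantees $f\in H^1_0(D)$ (indeed $f\in C^2(\bar D)$ vanishing on $\partial D$), so the hypotheses of Lemma~\ref{lem:conv-schema-para} on the initial datum are met; moreover (A3) together with the $C^1$ assumption on $a$ makes $\tilde b$ bounded on $\bar D\times\Xi$ via~\eqref{eq:deftilde}, and (A7)--(A8) follow from (A3)--(A4) (boundedness and uniform ellipticity of $\tilde a=a/2$).

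Second I would appeal to uniqueness: Lemma~\ref{lem:sol-semifaible-para} says the weak solution of~\eqref{eq:Pb-para-div} is unique in $L^2(0,T;\cV)\cap H^1(0;T,\cV')\cap C(0,T;\cH)$, so the weak solution $v$ appearing in Lemma~\ref{lem:conv-schema-para} coincides with $\gV$. Third, I would feed the sequence $f^{N,P}\in\cV^{N,P}$ satisfying~\eqref{eq:fn-to-f} (whose existence follows from the density of $\bigcup_{N,P}\cV^{N,P}$ in $\cV$, hence in $\cH$, recorded in Subsubsection~\ref{sssec:not-sobolev}) into the $\theta$-scheme~\eqref{eq:theta-scheme} with $\theta=1/2$, producing the $V^q_j[m]$ and thus $\gV^{m,N,P}=\sum_{j=1}^{N-1}\sum_{q=0}^PV^q_j[m]\psi_q\phi_j$. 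Under the extra regularity hypothesis $\gV\in C^1(0,T;\cV)\cap C^3(0,T;\cH)$ --- which is precisely the hypothesis under which the Crank--Nicolson convergence estimate of Lemma~\ref{lem:conv-schema-para} holds (there phrased for the solution $u=v$) --- Lemma~\ref{lem:conv-schema-para} yields directly
$$
\sup_{0\leq m\leq M}\|\gV^{m,N,P}-\gV(t_m,\cdot,\cdot)\|_\cH\xrightarrow[M\to\infty,N\to\infty,P\to\infty]{}0,
$$
which is the claim.

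In short, the theorem is a bookkeeping corollary of Lemmas~\ref{lem:sol-semifaible-para}, \ref{lem:sol-classique-semifaible-para} and~\ref{lem:conv-schema-para}, and the only genuine content to check is that the hypotheses of those lemmas are entailed by (A2)--(A6) plus the $C^1$ assumption on $a$: boundedness and coercivity of $\tilde a$, boundedness of $\tilde b$, and $f\in H^1_0(D)$. The main obstacle, such as it is, is not in this proof but is deferred to the Appendix, where Lemma~\ref{lem:conv-schema-para} must be proved --- i.e.\ combining the semidiscrete Galerkin error estimate (via Corollary~7.4-1 in~\cite{raviart}, or an $\cH$-energy argument using coercivity of $(u,v)\mapsto\cA(u,v)+\mu\langle u,v\rangle_\cH$) with the $O((\Delta t)^2)$ consistency of Crank--Nicolson and unconditional stability for $\theta\in[\tfrac12,1]$, then passing the triangle inequality $\|\gV^{m,N,P}-\gV(t_m)\|_\cH\le\|\gV^{m,N,P}-v^{N,P}(t_m)\|_\cH+\|v^{N,P}(t_m)-\gV(t_m)\|_\cH$ through the simultaneous limits. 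Here the interplay of the three discretization parameters $M,N,P$ requires a little care --- the time-stepping error bound should be taken uniform in $N,P$ (which it is, since the relevant constants depend only on $\tilde\lambda,\tilde\Lambda,\tilde M$ and $T$, not on the finite-element or chaos dimensions) --- but at the level of the present statement everything is already packaged.
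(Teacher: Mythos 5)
Your proposal is correct and matches the paper's (implicit) argument: the paper offers no separate proof of Theorem~\ref{thm:sumup-para}, presenting it as the result of ``gathering'' Lemmas~\ref{lem:sol-semifaible-para}, \ref{lem:sol-classique-semifaible-para} and~\ref{lem:conv-schema-para}, which is exactly the assembly you carry out. Your hypothesis-checking (that (A7)--(A8), the boundedness of $\tilde b$, and $f\in H^1_0(D)$ follow from (A3)--(A6) plus the $C^1$ assumption on $a$) mirrors what the paper does in the analogous elliptic case (proof of Theorem~\ref{thm:sumup-ell}) and is the only substantive content at this level.
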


\begin{remark}\label{interp}
For any $N \in \mathbb{N}^*$, $P \in \mathbb{N}$, $f_{N,P}$ can be, e.g., built as an interpolation function or as a projection on $\cV^{N,P}$.
\end{remark}

\begin{remark}\label{reapprox}
Note that the assumption that $\gV(t,x,\xi)$ is in $C^1(0,T;\cV)\cap C^3(0,T;\cH)$ is sufficient but not necessary. If $f$ is in $\mathcal{C}^{\infty}$, with $f$ and all its derivatives vanishing at the boundary $\partial D$, then the aforementioned assumption is satisfied \citep[see, e.g., Theorem VII.5.][]{brezis}. This last assumption is not restrictive in the sense that similar arguments to the ones in Remark \ref{rem:A6} can be applied in case it is not satisfied.
\end{remark}

\noindent
Then, as in \eqref{eq:pars-U}, the Sobol' index $S_I(\gV(T,x,\xi))$ can be approximated by
 \begin{equation}
 \label{eq:pars-U}
 \frac{ \sum_{q\in K_I} [\gV^{M,N}_q(x)]^2 }{ \sum_{q=1}^P[\gV^{M,N}_q(x)]^2}
\end{equation}
where 
$\gV^{M,N}_q(x)=\sum_{j=1}^{N-1}V^q_j[M]\phi_j(x)$ for $0\leq q\leq P$.

\section{Validation and illustration: numerical experiments on a one-dimensional Ornstein-Uhlenbeck process}
\label{sec:OU}

In this section we choose a simple example, namely a one-dimensional Ornstein-Uhlenbeck (OU) process whose coefficients depend on uncertain parameters, in order to validate and illustrate the PDE based approach we propose. The results we obtain are promising and motivate further work to extend the method to more complex settings.

More precisely our toy model is given by the SDE
\begin{equation}
\label{eq:OU}
dX_t=-\alpha(\xi)X_tdt+\sigma(\xi)dW_t,\quad t\geq 0,\quad X_0=x,
\end{equation} 
where $W$ is a one-dimensional standard Brownian motion and $x\in\R$.

We choose $\xi=(\xi_1,\xi_2)^T$ with $\xi_1,\xi_2$ independent, distributed as $\cU([0,1])$, and independent of $W$, and define
$$
\alpha(\xi)=\mu_1+\sqrt{3}\sigma_1(2\xi_1-1)\quad\text{ and }\quad\sigma(\xi)=\mu_2+\sqrt{3}\sigma_2(2\xi_2-1)$$
with $\sigma_i>0$, $\mu_i>\sqrt{3}\sigma_i$, $i=1,2$. 
Note that the probability distributions of $\alpha(\xi)>0$ and $\beta(\xi)>0$ are then given by
\begin{equation}
\label{eq:lawcoeff}
\alpha(\xi)\sim\cU\big( [\mu_1-\sqrt{3}\sigma_1,\mu_1+\sqrt{3}\sigma_1] \big)
\quad\text{ and }\quad
\sigma(\xi)\sim\cU\big( [\mu_2-\sqrt{3}\sigma_2,\mu_2+\sqrt{3}\sigma_2] \big).
\end{equation}

Note that the truncated orthonormal basis $\{\Psi_q\}_{q=0}^P$ is then chosen as the tensorized Legendre polynomials basis.

To start with, we have the following straightforward result.

\begin{proposition}
Consider the stochastic differential equation described by \eqref{eq:OU}.  
Assume $\xi$ and $\alpha(\xi)$, $\sigma(\xi)$ are defined as above.

Assumptions A1 and A2 are then trivially satisfied. 
In addition the coefficients in \eqref{eq:OU} satisfy A3 and we thus know that \eqref{eq:OU} admits a solution.
\end{proposition}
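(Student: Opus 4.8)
The plan is to verify Assumptions (A1), (A2) and (A3) in turn, and then to invoke the existence theorem already used in Section~\ref{sec:notation}.

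First I would check (A1). Here $m=2$, $\Xi=[0,1]^2$ and $\P_\xi$ is the law of $(\xi_1,\xi_2)$. Since $\xi_1,\xi_2$ are declared independent and each uniform on $[0,1]$, the product structures $\Xi=\Xi_1\times\Xi_2$ and $\P_\xi(dz)=\P_{\xi_1}(dz_1)\P_{\xi_2}(dz_2)$ hold by construction. Each $\P_{\xi_j}=\cU([0,1])$ has bounded support, so its moment generating function is finite on all of $\R$, hence analytic near $0$ and in particular with positive radius of convergence; this is exactly the sufficient condition stated in (A1), so $\P_{\xi_j}$ is characterized by its moments. Assumption (A2) I would dispatch in one line: on the product space $(\Omega,\cF,\P)=(C\times\Xi,\cC\otimes\cB_\Xi,\W\otimes\P_\xi)$ the coordinate variables $W$ and $\xi$ are automatically independent.

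Next I would check (A3). In the OU model $d=1$, $b(x,z)=-\alpha(z)x$ and $\sigma(x,z)=\sigma(z)$, the diffusion coefficient being constant in $x$. By \eqref{eq:lawcoeff}, $\alpha$ ranges in the compact interval $[\mu_1-\sqrt3\sigma_1,\mu_1+\sqrt3\sigma_1]$ and $\sigma$ in $[\mu_2-\sqrt3\sigma_2,\mu_2+\sqrt3\sigma_2]$; the hypotheses $\mu_i>\sqrt3\sigma_i$ place both intervals inside $(0,\infty)$, but what matters here is only that both ranges are bounded \emph{uniformly in $\xi$}. Setting $M:=\max\{1,\mu_1+\sqrt3\sigma_1,\mu_2+\sqrt3\sigma_2\}$, the Lipschitz estimate of (A3) reduces to $|b(x,z)-b(y,z)|=\alpha(z)|x-y|\le M|x-y|$ (the $\sigma$-difference vanishing identically), and the growth estimate \eqref{eq:growth} to $\alpha(z)^2x^2+\sigma(z)^2\le M^2(1+x^2)$; both hold for all $x,y\in\R$ and all $z\in\Xi$. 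That settles (A3).

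Finally, from (A3) the existence (and uniqueness) of a strong solution of \eqref{eq:OU} follows from Theorem~5.2.9 in \cite{kara}, exactly as for \eqref{eq:eds-uq}. I do not expect any genuine obstacle: the only point deserving a moment's attention is that the constants in (A1) and (A3) be independent of $\xi$, which is guaranteed precisely by the boundedness of the ranges in \eqref{eq:lawcoeff}; everything else is direct bookkeeping. As a consistency check one may also note that, for each fixed $\xi$, \eqref{eq:OU} is a linear SDE with constant coefficients whose unique strong solution is the explicit Ornstein--Uhlenbeck process $X_t=xe^{-\alpha(\xi)t}+\sigma(\xi)\int_0^te^{-\alpha(\xi)(t-s)}\,dW_s$.
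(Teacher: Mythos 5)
Your verification is correct and complete: the paper states this proposition without proof as a ``straightforward result,'' and your direct check of (A1) (product structure plus bounded support, hence moment-determinacy), (A2) (automatic on the product space), and (A3) (with $M=\max\{1,\mu_1+\sqrt{3}\sigma_1,\mu_2+\sqrt{3}\sigma_2\}$ uniform in $\xi$) followed by Theorem~5.2.9 of \cite{kara} is exactly the intended argument. Nothing is missing.
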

\vspace{0.2cm}

We set $D=(0,10)$ and start with studying $S_I(\gU)$ in Subsection \ref{ssec:OU-U}. We set $\gV(T,x,\xi)=\P^x(T\leq \tau_D|\xi)$ 
(this corresponds to $f\equiv 1$ in \eqref{eq:def-quant-int2})
and will study
$S_I(\gV)$ in Subsection \ref{ssec:OU-V}.

\vspace{0.3cm}

The OU process is a well marked test case. For example, for any $t\geq 0$, the solution to \eqref{eq:OU} is given by
$
X_t=xe^{-\alpha(\xi)t}+\sigma(\xi)\int_0^te^{-\alpha(\xi)(t-s)}dW_s$
\citep[see][Example 5.6.8]{kara}. 

\subsection{Study of $S_I(\gU)$}
\label{ssec:OU-U}

\begin{proposition}
\label{prop:U-OU}
Consider the stochastic differential equation described by \eqref{eq:OU}. Assume that $D=(0,10)$.  
Assume $\xi$ and $\alpha(\xi)$, $\sigma(\xi)$ are defined as above.

In addition to (A1)-(A3), the Assumptions 4 and 5 are also satisfied. Thus, for any $x\in D$, the quantity $\gU(x,\xi)$ is given by the solution of the stochastic elliptic PDE
\begin{equation}
\label{eq:ellOU}
\left\{\begin{array}{rlll}
\frac 1 2 \sigma^2(\xi)\partial^2_{xx}u(x,\xi)-\alpha(\xi)x\,\partial_{x}u(x,\xi)&=&-1&\forall x\in D\\
\\
u(0,\xi)=u(1,\xi)&=&0.&\\
\end{array}
\right.
\end{equation}
\end{proposition}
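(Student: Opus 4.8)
The plan is to check by hand the two structural assumptions not yet verified for this one-dimensional example, namely (A4) and (A5), and then to invoke Theorem~\ref{thm:feyn-U} (equivalently Corollary~\ref{cor:sol-classique}) to identify $\gU(x,\xi)$ with the classical solution of the elliptic problem \eqref{eq:Pb-ell}. Specialized to dimension $d=1$ with $a(x,\xi)=\sigma(\xi)\sigma(\xi)^T=\sigma^2(\xi)$ and $b(x,\xi)=-\alpha(\xi)x$, that problem is exactly \eqref{eq:ellOU}.

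First I would verify uniform ellipticity (A4). Since $d=1$, the diffusion matrix is the scalar $a(x,\xi)=\sigma^2(\xi)$, which does not depend on $x$. As $\xi_2\in[0,1]$ we have $\sigma(\xi)\in[\mu_2-\sqrt{3}\sigma_2,\mu_2+\sqrt{3}\sigma_2]$, and the standing hypothesis $\mu_2>\sqrt{3}\sigma_2$ makes the left endpoint strictly positive. Hence $\sigma^2(\xi)\geq(\mu_2-\sqrt{3}\sigma_2)^2=:\lambda>0$ uniformly in $\xi$, which is precisely (A4) for this value of $\lambda$. Next I would verify (A5): the domain $D=(0,10)$ is a bounded open interval whose boundary $\partial D=\{0,10\}$ is a finite set of points, so $\partial D$ is (trivially) of class $C^{2,\alpha}$ and (A5) holds. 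Together with (A1) and (A2) (immediate from the product structure of the probability space and the independence of $\xi$ and $W$) and (A3), already established in the previous proposition — the drift $x\mapsto-\alpha(\xi)x$ being globally Lipschitz and of linear growth uniformly in $\xi$ because $\alpha(\xi)\leq\mu_1+\sqrt{3}\sigma_1$, and $\sigma(\xi)$ being constant in $x$ — we conclude that (A2)--(A5) are all in force here.

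It then remains only to apply Theorem~\ref{thm:feyn-U}. By part~i) the elliptic problem \eqref{eq:Pb-ell} has, for a.e.\ value of $\xi$, a unique solution in $C^2(\bar D)$; by part~ii), which uses (A2), this solution coincides with $\gU(x,\xi)=\E^x[\tau_D\mid\xi]$ (well defined in $L^1(\Omega,\P)$, and bounded, by Lemmas~\ref{lem:tau-L1} and~\ref{coro:gU-L2}). Writing out \eqref{eq:Pb-ell} with the coefficients $a=\sigma^2(\xi)$ and $b(x,\xi)=-\alpha(\xi)x$ and using $D=(0,10)$ gives \eqref{eq:ellOU}, which is the claim.

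I do not expect any genuine obstacle: the statement is a direct specialization of the general theory developed earlier. The only point deserving a word of care is the meaning of (A5) in dimension one, where $\partial D$ is zero-dimensional and the $C^{2,\alpha}$ regularity requirement is satisfied vacuously; everything else is a matter of matching the abstract coefficients with the concrete ones of the Ornstein--Uhlenbeck model.
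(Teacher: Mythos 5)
Your proposal is correct and follows essentially the same route as the paper's own proof: check (A4) via the uniform lower bound $\sigma^2(\xi)\geq(\mu_2-\sqrt{3}\sigma_2)^2>0$, note that (A5) is trivial for an interval in dimension one, and conclude by Theorem~\ref{thm:feyn-U}. Your version merely spells out the details more explicitly.
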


\begin{proof}
(A4) is satisfied because of \eqref{eq:lawcoeff}. As we are in dimension one in space ($d=1$), Assumption 5 will be trivially met by any segment $(a,b)\subset\R$. Using Theorem~\ref{thm:feyn-U} we get the last statement of the proposition.
\end{proof}

\medskip

In view of Proposition \ref{prop:U-OU} we wish now to use Theorem \ref{thm:sumup-ell} in order to compute an approximation of the quantity of interest $\gU(x,\xi)$, and then compute the Sobol' indices by \eqref{eq:pars-U}.

 Note that \eqref{eq:ellOU} may trivially be rewritten in the form \eqref{eq:elldiv} 
with $\tilde{a}(x,z)=\sigma^2(z)/2$, $\tilde{b}(x,z)=\alpha(z)\,x$ and $\tilde{f}(x,z)=1$, for $x\in \bar{D}$, $z\in\Xi$. In order to check (A9) we have to impose that
\begin{equation}
\label{eq:coer-OU}
10(\mu_1+\sqrt{3}\sigma_1)<\frac{(\mu_2-\sqrt{3}\sigma_2)^2}{2(24)^{1/4}}.
\end{equation}
Then we have the following result.

\begin{proposition}
\label{prop:solU-OU}
In the context of Proposition \ref{prop:U-OU}, assume that $\sigma_1$, $\sigma_2$, $\mu_1$ and $\mu_2$ satisfy \eqref{eq:coer-OU}.
Then $\gU(x,\xi)$ can be approximated by the solution $\gU^{N,P}$ of \eqref{eq:galell} with, $\forall u,v\in\cV$,
$$\cA(u,v)=\E_\xi\Big( \int_0^{10}\frac{\sigma^2(\xi)}{2}\partial_x u(x,\xi)\partial_x v(x,\xi)dx
+ \int_0^{10}\alpha(\xi)\,x\,\partial_x u(x,\xi))  v(x,\xi)dx \Big)$$
and $F(v)=\E_\xi\big(\int_0^{10}v(x,\xi)\big)dx$, $\forall v\in\cV$.
\end{proposition}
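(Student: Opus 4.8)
The plan is to verify that the abstract convergence theorem, Theorem~\ref{thm:sumup-ell}, applies to the specific one-dimensional elliptic problem \eqref{eq:ellOU} written in divergence form, so that the only genuine work is checking the list of hypotheses of that theorem. First I would record the reduction already noted before the statement: with $d=1$, $D=(0,10)$, and the identifications $\tilde a(x,z)=\sigma^2(z)/2$, $\tilde b(x,z)=\alpha(z)\,x$, $\tilde f\equiv 1$, equation \eqref{eq:ellOU} is exactly of the form \eqref{eq:elldiv}, and these $\tilde a,\tilde b,\tilde f$ coincide with the ones produced by the general recipe \eqref{eq:deftilde} (here $\partial\tilde a\equiv 0$ since $\sigma(z)$ does not depend on $x$, so $\tilde b=\partial\tilde a-b=-(-\alpha(z)x)=\alpha(z)x$). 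In particular $a(\cdot,\xi)=\sigma^2(\xi)$ is constant in $x$, hence trivially in $C^1(D;\R)$, for every value of $\xi$. Assumptions (A2)--(A5) were already established for this model in Proposition~\ref{prop:U-OU} (and the preceding discussion of (A1)--(A3)). So the whole content of the proof is that the standing hypothesis \eqref{eq:coer-OU} implies Assumption (A9).

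The second step is this verification of (A9). Since $\tilde a(x,z)=\sigma^2(z)/2$ is scalar, (A8) holds with $\tilde\lambda=\tfrac12(\mu_2-\sqrt3\sigma_2)^2$, because by \eqref{eq:lawcoeff} we have $\sigma(\xi)\ge \mu_2-\sqrt3\sigma_2>0$ for a.e.\ $\xi$, and (A7) holds with $\tilde\Lambda=\tfrac12(\mu_2+\sqrt3\sigma_2)^2$. For the drift bound, $|\tilde b(x,z)|=\alpha(z)|x|\le 10(\mu_1+\sqrt3\sigma_1)=:\tilde M$ for all $x\in\bar D=[0,10]$, using $\alpha(\xi)\le\mu_1+\sqrt3\sigma_1$ from \eqref{eq:lawcoeff}. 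It then remains to check $\tilde M<\tilde\lambda/\sqrt{2\,C(1,|D|)}$. With $d=1$ and $|D|=10$, the Poincar\'e constant \eqref{eq:const-poinca} is $C(1,10)=\sqrt{1+(10/(2\sqrt\pi)\cdot\Gamma(1/2))}=\sqrt{1+5}=\sqrt6$, since $\Gamma(1/2)=\sqrt\pi$. Hence $\sqrt{2C(1,|D|)}=\sqrt{2\sqrt6}=(24)^{1/4}$, and the required inequality $10(\mu_1+\sqrt3\sigma_1)<\tfrac12(\mu_2-\sqrt3\sigma_2)^2/(24)^{1/4}$ is precisely \eqref{eq:coer-OU}. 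This is the one place that requires care — matching the explicit value of the universal constant $C(d,|D|)$ to the numerical constant appearing in \eqref{eq:coer-OU} — but it is a short computation rather than a real obstacle.

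Finally, having checked that $a(\cdot,\xi)\in C^1(D;\R^{d\times d})$ for a.e.\ $\xi$, that $\tilde a,\tilde b,\tilde f$ are given by \eqref{eq:deftilde}, that (A2)--(A5) hold (Proposition~\ref{prop:U-OU}), and that (A9) holds (under \eqref{eq:coer-OU}), I invoke Theorem~\ref{thm:sumup-ell} directly: $\gU(x,\xi)$ is a weak solution at the stochastic level of \eqref{eq:elldiv}, and is approximated in $\|\cdot\|_\cV$ by the Galerkin solution $\gU^{N,P}$ of \eqref{eq:galell} as $N,P\to\infty$. To conclude the statement as written, I specialise the bilinear form $\cA$ of \eqref{eq:def-A} and the linear form $F$ to the present coefficients: substituting $\tilde a=\sigma^2(\xi)/2$, $\tilde b=\alpha(\xi)x$, $\tilde f=1$, and $D=(0,10)$ gives exactly
$$\cA(u,v)=\E_\xi\Big(\int_0^{10}\tfrac{\sigma^2(\xi)}{2}\,\partial_x u(x,\xi)\,\partial_x v(x,\xi)\,dx+\int_0^{10}\alpha(\xi)\,x\,\partial_x u(x,\xi)\,v(x,\xi)\,dx\Big)$$
and $F(v)=\langle\tilde f,v\rangle_\cH=\E_\xi\big(\int_0^{10}v(x,\xi)\,dx\big)$, which is the claim. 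I expect no serious difficulty here; the only thing to be scrupulous about is the bookkeeping of constants in the (A9) check and making sure the one-dimensional divergence rewriting genuinely agrees with the general formulas \eqref{eq:deftilde}.
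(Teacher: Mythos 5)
Your proposal is correct and follows essentially the same route as the paper, which gives no separate proof of Proposition~\ref{prop:solU-OU} but instead relies on the remarks immediately preceding it: the divergence-form identifications $\tilde a=\sigma^2(\xi)/2$, $\tilde b=\alpha(\xi)x$, $\tilde f\equiv 1$, the observation that \eqref{eq:coer-OU} is exactly the condition needed for (A9) (your computation $C(1,10)=\sqrt 6$, hence $\sqrt{2C(1,|D|)}=(24)^{1/4}$, is the right bookkeeping), and a direct appeal to Theorem~\ref{thm:sumup-ell}. Your write-up merely makes explicit what the paper leaves implicit, so there is nothing to correct.
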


\vspace{1cm}
Let $N,P\in\N^*$ fixed. Denote $x_i=10\frac{i}{N}$, $1\leq i\leq N$, and $h=10/N$. We use as finite element basis functions the $\phi_i$, 
$1\leq i\leq N-1$, defined by $\phi_i(x)=\phi(\frac{x-x_i}{h})$ with $\phi(x)=\1_{|x|\leq 1}(1-|x|)$. Note that $\phi_1(0)=\phi_{N-1}(10)=0$.

In order to follow the program proposed in Proposition \ref{prop:solU-OU} we have then to solve~$\mathbf{A}U=\mathbf{F}$, with $\mathbf{A}$ and $\mathbf{B}$ defined as after Eq. 
\eqref{eq:systU-ell}. Here simple computations show that ($\otimes$ stands for the Kronecker product)
$$
\mathbf{A}=\Sigma\otimes A+ \beta\otimes B$$
with
$$
\Sigma=\Big( \E_\xi\big[\frac{\sigma^2(\xi)}{2}\psi_q(\xi)\psi_p(\xi)\big] \Big)_{0\leq p,q\leq P}\text{ and }
\quad \beta=\Big( \E_\xi\big[\alpha(\xi)\psi_q(\xi)\psi_p(\xi)\big] \Big)_{0\leq p,q\leq P}\, ,$$
$$
A=\Big( \int_0^{10} \partial_x \phi_j  \partial_x \phi_i  \Big)_{1\leq i,j\leq N-1}
=\frac 1 h \left(
\begin{array}{ccccc}
2&-1&&&0\\
-1&2&&&\\
&&\ddots&&\\
&&&2&-1\\
0&&&-1&2\\
\end{array}
\right)\, ,
$$
$$B=\Big( \int_0^{10} x\partial_x \phi_j(x)  \phi_i(x)\,dx  \Big)_{1\leq i,j\leq N-1}
=  \left(
\begin{array}{ccccc}
-\frac h 3&\frac{x_1}{2}+\frac h 6&&&0\\
-\frac{x_2}{2}+\frac h 6&-\frac h 3&&&\\
&&\ddots&&\\
&&&-\frac h 3&\frac{x_{N-2}}{2}+\frac h 6\\
0&&&-\frac{x_{N-1}}{2}+\frac h 6&-\frac h 3\\
\end{array}
\right).
$$
Note that $\Sigma$, $\beta$ and $A$ are symmetric but not $B$. Besides $\mathbf{F}=(\mathbf{F}_p)_{0\leq p\leq P}$ is simply given by
$\mathbf{F}_0=h(1,\ldots,1)^T\in\R^{N-1}$ and $\mathbf{F}_p=0$ for $1\leq p\leq P$.

\vspace{0.4cm}

We will compare our method with crude Monte Carlo estimator. In our Monte Carlo estimation of the exit time $\tau_D$ we use a Euler scheme and a boundary shifting method to be described in \cite{gobet-livre-num}.

\vspace{0.1cm}

More precisely the function $\widehat{\cF}_\gU$  we use (see Subsubsection \ref{sssec:MC}) can be described as:
\vspace{0.4cm}

\hspace{-1.5em}
\fbox{\parbox{\textwidth}{
{\bf INPUT:} the brownian path $W$, the uncertain parameter $\xi$ and the time step $\delta t$.

\vspace{0.3cm}

{\bf COMPUTE:} $\sigma(\xi)=\mu_2+\sqrt{3}\sigma_2(2\xi_2-1)$, $\alpha(\xi)=\mu_1+\sqrt{3}\sigma_1(2\xi_1-1)$
and $c(\xi)=0.5826\,\sigma(\xi)\,\sqrt{\delta t}$.

\vspace{0.3cm}
{\bf INITIALIZE:} $X=x$, $t=0$ and $d=100$ (in fact any value that is greater than $c(\xi)$, for any $\xi$).

\vspace{0.3cm}
\hspace{1cm}{\bf While} $\big\{ \;\;X\in(0,10)\quad \text{and}\quad c(\xi)<d  \big\}$

\hspace{1.5cm}{\bf Do:} Set $X=X+\sqrt{\sigma(\xi)}\sqrt{\delta t}(W_{t+\delta t}-W_t)-\alpha(\xi)\,X\,\delta t$

\hspace{2.8cm} $t=t+\delta t$

\hspace{2.8cm} $d=\mathrm{dist}(X,\partial D)$

\hspace{1cm}{\bf Endwhile}

\vspace{0.4cm}
{\bf OUTPUT:} the time $t$ that is reached when exiting the while loop.}}

%\vspace{0.2cm}
%Besides, as we are in dimension $1$ the PDE \eqref{eq:ellOU} becomes in fact an ODE, that we can explicitly solve. We thus get
%$$
%\gU(x,\xi)=\int_0^x\Big[ -\frac{2}{\sigma(\xi)}\sqrt{\frac{\pi}{\alpha(\xi)}}\mathrm{erf}\big( \frac{\sqrt{2\alpha(\xi)}}{\sigma(\xi)}t\big) +C(\xi)  \Big]
%\exp\big( \frac{\alpha(\xi)}{\sigma^2(\xi)}t^2 \big)dt$$
%with
%$$
%C(\xi)=\frac{ \frac{2}{\sigma(\xi)}\sqrt{\frac{\pi}{\alpha(\xi)}}  \int_0^{10} \mathrm{erf}\big( \frac{\sqrt{2\alpha(\xi)}}{\sigma(\xi)}t\big) \exp\big( \frac{\alpha(\xi)}{\sigma^2(\xi)}t^2 \big)dt}{ \int_0^{10}\exp\big( \frac{\alpha(\xi)}{\sigma^2(\xi)}t^2 \big)dt }$$
%and $\mathrm{erf}(x)=\frac{1}{\sqrt{2\pi}}\int_{-\infty}^xe^{-t^2/2}dt$ the c.d.f. of the $\cN(0,1)$ law.
%
%Thus we can compute the Sobol indices, using the general method described in Subsection \ref{ssec:state-art}.

\medskip

In Table \ref{tab:1} we illustrate the efficiency of our approach with $\mu_1=1$, $\mu_2=9$ and $\sigma_1=\sigma_2=0.2$, and with the starting point $x=5$.

\begin{table}[h!]
 \begin{center}
\begin{tabular}{cccc}
\hline
 & Galerkin  &  Double MC   \\
 \hline
$S_1(\gU)$ &  ($N=1000$, $P=10$)   0.0253    &   ($N=M=10^4$, $\delta t = 10^{-3}$) 0.024927  \\
\hline
$S_2(\gU)$ &  ($N=1000$, $P=10$)   0.9747    &  ($N=M=2\times 10^4$, $\delta t = 10^{-4}$)  0.971277   \\
\end{tabular}
\caption{Sobol' indices for $\gU(x,\xi)$, $x=5$, in the Ornstein-Uhlenbeck example, with $\mu_1=1$, $\mu_2=9$ and $\sigma_1=\sigma_2=0.2$.}
\label{tab:1} 
\end{center}
\end{table}

%\vspace{0.2cm}
%Before turning to Subsections \ref{ssec:OUell} and \ref{ssec:OUpara}, we present in the next subsection purely Monte Carlo estimators which will serve as benchmarks at several places in the paper.

\subsection{Study of $S_I(\gV)$}
\label{ssec:OU-V}

Recall that we consider $\gV(T,x,\xi)=\P^x(T\leq \tau_D|\xi)$ 
(this corresponds to $f\equiv 1$ in \eqref{eq:def-quant-int2}).
Note that $f\equiv 1$ do not satisfy all the requirements of (A6). In particular it does not satisfy the uniform Dirichlet boundary condition. However it is possible to approach $f$ by $f_\varepsilon$ satisfying (A6). Using the construction based on convolution arguments suggested in Remark \ref{rem:A6}, we get $f_{\varepsilon} \equiv f$ on $K_{\varepsilon}$ with $K_{\varepsilon}$ a compact subset of $D$ such that $|D \setminus K_\varepsilon| \leq \varepsilon$.
%Concretely, we can define $ f_\varepsilon \, : \, [0,10] \mapsto \mathbb{R}$ as:
%$$  \left\{
%      \begin{array}{rcl}
%        f_\varepsilon(x) & = & \frac{6}{\varepsilon^5}x^5 -\frac{15}{\varepsilon^4}x^4+\frac{10}{\varepsilon^3}x^3 \textup{ for } x \in [0,\varepsilon]\\
%        f_\varepsilon(x)& = & 1 \textup{ for } x \in (\varepsilon,10-\varepsilon)\\
%         f_\varepsilon(x)& = &\frac{6}{\varepsilon^5}(10-x)^5 -\frac{15}{\varepsilon^4}(10-x)^4+\frac{10}{\varepsilon^3}(10-x)^3 \textup{ for } x \in [10-\varepsilon,10]\\
%      \end{array}
%    \right.
%$$
%Note that $f_\varepsilon$ satisfies (A6) and $||f-f_\varepsilon||_\infty\leq 1$. 
%\textcolor{red}{????????????????}
A consequence is that, numerically, if we choose a discretization step small enough, $f_\varepsilon$ will coincide with $f$.

\vspace{0.2cm}
We take the same finite element basis $\phi_i$, $1\leq i \leq N-1$ as in Subsection \ref{ssec:OU-U}. We will use $M$ time steps. We take $\epsilon < h$ so that 
$f_\epsilon$ can be simply interpolated by
$$
f^{N,P}=\sum_{i=1}^{N-1}f_\epsilon(x_i)\phi_i=\sum_{i=1}^{N-1}\phi_i$$
and thus the block vector $\mathbf{f}=(\mathbf{f}_p)_{0\leq p\leq P}$ is given 
by
$\mathbf{f}_0=(1,\ldots,1)^T\in\R^{N-1}$ and $\mathbf{f}_p=0$ for $1\leq p\leq P$.
The block matrix $\mathbf{A}$ remains as in Subsection \ref{ssec:OU-U}. In order to solve \eqref{eq:theta-scheme} (with
$\theta=\frac 1 2$) it remains to compute $\mathbf{M}$. But simple computations show that
$$
\mathbf{M}=\mathcal{M} \otimes M_1$$
with 
$\mathcal{M} = \Big( \E_\xi\big[\psi_q(\xi)\psi_p(\xi)\big] \Big)_{0\leq p,q\leq P}$
and 
$$
M_1=\Big( \int_0^{10}  \phi_j(x)  \phi_i(x)\,dx  \Big)_{1\leq i,j\leq N-1}
=  \left(
\begin{array}{ccccc}
2h/3 & h /6&&&0\\
 h/ 6&2h / 3&&&\\
&&\ddots&&\\
&&&2h / 3& h / 6\\
0&&& h /6& 2h / 3 \\
\end{array}
\right).
$$
We compare our method with crude Monte Carlo estimator. 
That is to say we use in \eqref{eq:MC} a function~$\widehat{\cF}_\gV$ that is constructed in the same spirit as $\widehat{\cF}_\gU$ in Subsection
\ref{ssec:OU-U} (in particular we use again a boundary shifting method).
In Table \ref{tab:2} we illustrate the efficiency of our approach with $\mu_1=1$, $\mu_2=2$ and $\sigma_1=\sigma_2=0.2$, and with the starting point $x=1$ and the time horizon $T=0.3$.

\begin{table}[h!]
 \begin{center}
\begin{tabular}{cccc}
\hline
 & Galerkin / Crank-Nicholson &  Double MC    \\
 \hline
$S_1(\gV)$ &  ($M=300$, $N=1000$, $P=10$)   0.0961   &   ($N=M=5\times 10^4$, $\delta t = 6\times 10^{-4}$) 0.095812 \\
\hline
$S_2(\gV)$ &  ($M=300$, $N=1000$, $P=10$)   0.9039    &  ($N=M=5\times 10^4$, $\delta t = 6\times 10^{-4}$)  0.903182  \\
\end{tabular}
\caption{Sobol' indices for $\gV(T,x,\xi)$, $x=1$, $T=0.3$ in the Ornstein-Uhlenbeck example, with $\mu_1=1$, $\mu_2=2$ and $\sigma_1=\sigma_2=0.2$.}
\label{tab:2} 
\end{center}
\end{table}

%From this one may infer directly that
%\begin{equation}
%\label{eq:espOU}
%\E^x[\,X_t\,|\,\xi\,]=xe^{-\alpha(\xi)t},
%\end{equation}
%which will provide an explicit benchmark in the study of $S_I(\gV)$.
%Besides, for any $0\leq s\leq t$,
%\begin{equation}
%\label{eq:lawOU}
%X_t-X_s\,|\,\xi\,\sim\,\cN\Big( x(e^{-\alpha(\xi)t}-e^{-\alpha(\xi)s})\, , \,  \frac{\sigma^2(\xi)}{2\alpha(\xi)}(1-e^{-2\alpha(\xi)(t-s)})  \Big).
%\end{equation}
%This will be used to sample exactly some marginal
%$(X_0,X_{t_1},\ldots,X_{t_n},\ldots)$ of $X$, in order to approach $S_I(\gU)$ by crude Monte Carlo estimators.

%However Assumption A6 is not satisfied because the drift term is not bounded. But it is possible to get the existence of a solution to the ad hoc parabolic PDE by other means and the result of Theorem \ref{thm:feyn-V} will remain true. {\color{red}d\'etailler ou donner une r\'ef\'erence} \vspace{0.2cm}

\section{Appendix}
\label{sec:app}

\subsection{Integrability of the stochastic quantities of interest}

\begin{proof}[Proof of Lemma \ref{lem:X-L2}]
Let $q\in\N^*$ and $t\geq 0$. Using Problem 5.2.15 in \cite{kara}, we can claim that, under (A3), for any $z\in\Xi$
\begin{equation}
\label{eq:m2-maj}
\int_C\,|X_t(\theta,z)|^q\W(d\theta)\leq C(1+|x|^q)\exp\big(Ct\big),
\end{equation}
where  $C=C(t,M)$.  As the constant $M$ in  \eqref{eq:growth} is uniform in $z$, the constant $C$ does not depend on $z$ either. Integrating \eqref{eq:m2-maj} over $\Xi$ and against   $\P_\xi$ we get the desired result.
\end{proof}

\begin{proof}[Proof of Lemma \ref{lem:tau-L1}]
%In \cite{kara}, see Proposition 5.7.2, Lemma 5.7.4 and the beginning of Remark 5.7.5 (and take into account our Remark \ref{rem:esp-xi}).
Here we revisit the proof of Lemma 5.7.4 in \cite{kara} and give details for the sake of completeness. Thanks to (A4), we have for example
$a_{11}(x,z)\geq \lambda$ for any $x=(x_1, \ldots , x_d)\in\R^d$, $z\in\Xi$. We note $q=\min_{x\in \bar{D}}x_1$ and $b$ a constant s.t.
$|b(x,z)|\leq b<\infty,\,\forall x\in D,\,\forall z\in\Xi$.

Such a finite constant $b$ exists thanks to \eqref{eq:growth}. Set now $\nu>2b/\lambda$ and consider the function
$h(x)=-\mu\exp(\nu x^1)$, $x\in D$, where the constant $\mu$ will be determined later. This function is of class $C^\infty(D)$ and satisfies for any $x\in D$ and $z\in\Xi$,
$$
\begin{array}{ll}
-\frac1 2 \sum_{i,j=1}^da_{ij}(x,z)\partial^2_{x_ix_j}h(x)-\sum_{j=1}^db_j(x,z)\partial_{x_i}h(x)&\\\\
\quad \quad\quad= \mu e^{\nu x^1}(\frac 1 2\nu^2a_{11}(x,z)+\nu b_1(x,z))
\geq\frac 1 2 \mu\nu \lambda e^{\nu q}\big( \nu-\frac{2b}{\lambda} \big).&
\end{array}
$$
Choosing $\mu$ sufficiently large, we have for any $x\in D$, any $z\in\Xi$:
$$\frac1 2 \sum_{i,j=1}^da_{ij}(x,z)\partial^2_{x_ix_j}h(x)+\sum_{j=1}^db_j(x,z)\partial_{x_i}h(x)\leq -1.$$
Let us write $\tau_D(\omega)=\tau_D(\theta,z)$. As $X_0=x \in D$, using It\^o Formula and taking the expectation against $\W$ we get:
$$
\forall z\in\Xi\,\int_C(t\wedge\tau_D(\theta,z)\W(d\theta)\leq h(x)-\int_Ch(X_{t\wedge\tau_D}(\theta,z))\W(d\theta)\leq 2\max_{y\in\bar{D}}|h(y)| \, .$$
Note that in the above computation, the expectation of the stochastic integral vanishes, as $h$ is bounded with bounded derivatives. Then, integrating over $\Xi$ and against $\P_\xi$ the above inequality we get the result by monotone convergence.
\end{proof}

\begin{proof}[Proof of Lemma \ref{coro:gU-L2}]
According to Theorem \ref{thm:feyn-U}, the function $\gU(\cdot,\xi)$ solves \eqref{eq:Pb-ell} (for a.e. fixed value of $\xi$). We now use a maximum principle argument to get an a priori bound on $\gU(\cdot,\xi)$. 
Without loss of generality we assume that $D$ lies in the slab $0<x^1<\delta$ for a certain $0<\delta<\infty$ (the general case can be recovered by  translation arguments). 
According to Theorem 3.7 in \cite{trudinger} and its proof we have
\begin{equation}
\label{eq:maximum}
\sup_{x\in\bar{D}}|\gU(x,\xi)|\leq \frac{C'(\xi)}{\lambda}
\end{equation}
where $C'(\xi)=e^{\alpha(\xi)\delta}-1$  with $\alpha(\xi)$ a quantity chosen s.t. $\alpha(\xi)\geq 1+\frac 1 \lambda \sup_{x\in D}|b(x,\xi)|$ (note that the uniform ellipticity constant $\lambda$ in (A4) does not depend on $z$). Then, thanks to the uniformity in $z$ of the constant $M$ in
\eqref{eq:growth} and to the boundedness of $D$, the quantity $\alpha(\xi)$ may be chosen independent on $\xi$. Therefore $C'(\xi)=C'$  does not depend on $\xi$, and thus we get the result with $C=C'/\lambda$.
\end{proof}

\subsection{Results on stochastic partial differential equations}
\begin{proof}[Proof of Lemma \ref{lem:sol-faible-ell}]
As $\tilde{f}$ is uniformly bounded in $x$ and $z$, it is easy to prove that the form $F$ is continuous on $\cV$ (with the help of Cauchy-Schwarz's inequality).
Thanks to (A7) and (A9), the bilinear form $\cA$ is continuous on $\cV$. Thanks to (A8)-(A9) one may check the coercivity of 
$\cA$. Indeed, using Lemma~8.4 in \cite{trudinger}, one gets for any $z\in\Xi$,
\begin{eqnarray}
\label{eq:coer-ell}
\int_D(\nabla v(\cdot,z))^T\tilde{a}(\cdot,z)\nabla v(\cdot,z)
+ \int_D(\nabla v(\cdot,z))^T\tilde{b}(\cdot,z)v(\cdot,z)&\nonumber \\
\quad \quad \quad  \geq\frac{\tilde{\lambda}}{2}\int_D|\nabla v(\cdot,z)|^2-\tilde{\lambda}\nu^2\int_Dv^2(\cdot,z)&
\end{eqnarray}
where $\nu=\dfrac{\tilde{M}}{\tilde{\lambda}}$. Using now the definition of $||\cdot||_{H^1(D)}$, Inequality \eqref{eq:equiv-norm}, and integrating \eqref{eq:coer-ell} over $\Xi$ against $\P_\xi$ we get
$$
\cA(v,v)\geq \Big( \frac{\tilde{\lambda}}{2C(d,|D|)}-\tilde{\lambda}\nu^2\Big)||v||^2_\cV$$
(we recall that $C(d,|D|)$ is defined by \eqref{eq:const-poinca}).
As $\tilde{M}<\dfrac{\tilde{\lambda}}{\sqrt{2C(d,|D|)}}$, we have that 
$\dfrac{\tilde{\lambda}}{2C(d,|D|)}-\tilde{\lambda}\nu^2>0$.
The existence of a unique weak solution at the stochastic level then follows from the Lax-Milgram theorem.

Now, as $F$ and $\cA$ are continuous, as $\cA$ is coercive and as $\cV^{N,P}\subset\cV$, the existence of a unique solution to \eqref{eq:galell} follows from Lax-Milgram theorem again, but applied this time on~$\cV^{N,P}$. In order to prove the convergence result, we first notice that thanks to the Céa lemma we have for any $N,P\in\N^*$
$$
||u-u^{N,P}||_\cV\leq \tilde{C}\min_{v\in\cV^{N,P}}||u-v||_\cV,
$$
where the constant $\tilde{C}$ depends on $\tilde{\lambda}$, $\tilde{\Lambda}$, $\tilde{M}$ and $C(d,|D|)$. Second, we recall that
$\cV^{N,P}\subset\cV^{N+1,P+1}$  for any $N,P$, and that $\bigcup_{N,P}\cV^{N,P}$ is dense in $\cV$. This is sufficient in order to prove that $\min_{v\in\cV^{N,P}}||u-v||_\cV\to 0$, as $N,P\to\infty$, and the result follows.
\end{proof}

\begin{proof}[Proof of Theorem \ref{thm:sumup-ell}]
 By Corollary \ref{cor:sol-classique}, $\gU(x,\xi)$ is a classical solution of \eqref{eq:Pb-ell}.  Then, using Theorem 6.6 in \cite{trudinger} we get that, for a.e. value of $\xi$
$$
\forall 1\leq i\leq d,\quad \sup_{x\in\bar{D}}|\partial_{x_i}\gU(x,\xi)|\leq C''(\sup_{x\in\bar{D}}|\gU(x,\xi)|+1),$$
with $C''$ depending on $d$, $M$, $|D|$ and $\lambda$, but not of $\xi$. But, as already noticed in the proof of Lemma \ref{coro:gU-L2}, we have
$\sup_{x\in\bar{D}}|\gU(x,\xi)|\leq C$ with $C$ depending on $M$, $|D|$ and $\lambda$. Thus $\gU(x,\xi)$ belongs to the space~$\cV$.

As noticed in Subsection \ref{ssec:ell} any classical solution of \eqref{eq:Pb-ell}
is a classical solution of \eqref{eq:elldiv}. Let $v\in C^1_c(D)\otimes L^2(\Xi,\P_\xi)$. Multiplying the first line of \eqref{eq:elldiv}
by $v$, integrating over $D\times \Xi$ against $dx\otimes \P_\xi(dz)$, and performing integration by parts w.r.t  the variable $x$, we get that $\cA(\gU,v)=F(v)$. Thanks to Equation~\eqref{eq:growth} in~(A3),  the coefficients $\tilde{a}$ and $\tilde{b}$ are obviously bounded uniformly in $x$ and $z$, because $D$ is bounded, and we have therefore (A7) and the continuity of the form $\cA$. Using then density arguments we get $\cA(\gU,v)=F(v)$ for all $v\in\cV$. (A8) is a consequence of (A4), and we are under (A9). Thus we may get the approximation result by applying Lemma \ref{lem:sol-faible-ell}.
\end{proof}

\begin{proof}[Proof of Lemma \ref{lem:sol-semifaible-para}]
In the proof of Lemma~\ref{lem:sol-faible-ell}  we have seen that (A7) and the boundedness of $\tilde{b}$ imply that the form $\cA$ is continuous 
 on $\cV$. 

We now prove that we can find $\mu\geq 0$ large enough such that the form $\cA_\mu:(u,v)\mapsto\cA(u,v)+\mu\langle u,v\rangle_\cH$ is coercive on $\cV$. Let us denote $\tilde{M}$ the upper bound for $\tilde{b}$ (i.e. $|\tilde{b}(x,z)|\leq \tilde{M}$ for any $x\in\bar{D}$, $z\in\Xi$). Proceeding as for Equation \eqref{eq:coer-ell} (that we integrate over $\Xi$ against $\P_\xi$) we get that for any $v\in\cV$
$$
\cA_\mu(v,v)\geq \frac{\tilde{\lambda}}{2}\E_\xi\big(\int_D|\nabla v(\cdot,\xi)|^2\big)+(\mu-\frac{\tilde{M}^2}{\tilde{\lambda}})
\E_\xi\big(\int_Dv^2(\cdot,\xi)\big).$$
Note that if $\tilde{M}<\dfrac{\tilde{\lambda}}{\sqrt{2C(d,|D|)}}$ it suffices to take $\mu=0$ (see the proof of Lemma \ref{lem:sol-faible-ell}). If this is not the case we 
take $\mu>\frac{\tilde{M}^2}{\tilde{\lambda}}$ and set
$c=\min\big( \frac{\tilde{\lambda}}{2},\mu-\frac{\tilde{M}^2}{\tilde{\lambda}}\big)$. We then have 
$\cA_\mu(v,v)\geq c||v||^2_\cV$ for any $v\in\cV$.

 This is sufficient to prove the result (see Lemma 7.1.2 and Theorem 7.1.4 in \cite{ern}; see also the earlier reference
\cite{lions-magenes}). 
\end{proof}

\begin{proof}[Proof of Lemma \ref{lem:sol-classique-semifaible-para}]
 By Corollary \ref{cor:sol-classique}, $\gV(t,x,\xi)$ is a classical solution of \eqref{eq:Pb-para}. Using Theorem
5.14 in \cite{lieberman} we get that for a.e. value of $\xi$
$$
\sup_{(t,x)\in [0,T]\times\bar{D}}| \gV(t,x,\xi)|+
\sup_{(t,x)\in [0,T]\times\bar{D}}|\partial_t \gV(t,x,\xi)|
+\sum_{i=1}^d\sup_{(t,x)\in [0,T]\times\bar{D}}|\partial_{x_i} \gV(t,x,\xi)|
$$
is bounded above by some constant $C$ that depends on $M$, $|D|$, $\lambda$ and $d$ but not on $\xi$. We use that the coefficients $a$ and $b$ are Lipschitz continuous and that the initial condition $f$ is in the Hölder space~$H_{2+\alpha}$ for $\alpha=1$; see pp
46-47 of \cite{lieberman} for more details on H\"older spaces and norms in the parabolic setting. The point is that here we can write these properties of $a$, $b$ and $f$ with quantities that are uniform w.r.t. the uncertain parameter $\xi$.
Thus the function $\gV(t,x,\xi)$ belongs to the space $L^2(0,T;\cV)\cap H^1(0;T,\cV')$.
As it is continuous in time and bounded (in particular uniformly w.r.t. $\xi$) it is also in
$C([0,T];\cH)$.

As noticed in Subsubsection \ref{sssec:div-form-para} the function $\gV(t,x,\xi)$ is a classical solution of \eqref{eq:Pb-para-div}.
Multiplying the first line of \eqref{eq:Pb-para-div} by a test function in $C^1_c(D)\otimes L^2(\Xi,\P_\xi)$, integrating over 
$D\times\Xi$  against $dx\otimes \P_\xi(dz)$,
 and using density arguments
we get
$$
\forall w\in\cV,\, \langle \partial_t\gV(t,\cdot,\cdot),w\rangle_\cH+\cA(\gV(t,\cdot,\cdot),w)=0,\,\forall t\in[0,T] \text{ and }
\gV(0,\cdot,\cdot)=f.$$
The result is proved.
\end{proof}

\begin{proof}[Proof of Lemma \ref{lem:conv-schema-para}]
Remember that in the proof of Lemma \ref{lem:sol-semifaible-para} we have seen that we can find $\mu\geq 0$ large enough such that the form $\cA_\mu$ is coercive. Here for simplicity we assume that $\mu=0$, i.e. $\cA$ is coercive (with constant $c$). We claim that this is without loss of generality. Indeed if 
$\mu>0$ we consider~$v_\mu$ the solution of
\begin{equation}
\label{eq:sol-semifaible-para-mu}
\forall w\in \cV,\, \langle \partial_t v_\mu(t,\cdot)\,,\,w\rangle_\cH+\cA_\mu\big(v_\mu(t,\cdot)\,,\,w\big)=0\text{ for a.e. } t\in[0,T]\text{ and }v_\mu(0,\cdot)=f
\end{equation}
(the solution exists because the form $\cA_\mu$ is continuous and coercive). Then it is obvious that the function
$v(t,\cdot)=e^{\mu t}v_\mu(t,\cdot)$ solves \eqref{eq:sol-semifaible-para} (in fact \eqref{eq:sol-semifaible-para} and \eqref{eq:sol-semifaible-para-mu} are equivalent via the change of variable). Thus one may approach $v_\mu$ by the $\theta$-scheme and get
an approximation of $v$ applying again the change of variable.

\vspace{0.2cm}
Then for any $N,P$ the form $\cA$ defines a scalar product on $\cV^{N,P}$, so that for any $u\in\cV$ there is by the Riesz theorem an element $\Pi^{N,P}u$
of $\cV^{N,P}$ such that
$$
\forall w\in\cV^{N,P},\quad \cA(\Pi^{N,P}u,w)=\cA( u,w).$$
The application $\Pi^{N,P}$ is linear and continuous from $\cV$ to $\cV^{N,P}$ and is called the elliptic projection operator. It satisfies
\begin{equation*}
%\label{eq:def-proj}
\forall u\in\cV,\quad\forall w\in\cV^{N,P},\quad \cA(u-\Pi^{N,P}u,w)=0.
\end{equation*}
Then, following the proof of Céa's Lemma (Theorem 3.1-2 in \cite{raviart}) one can prove that
$$
||u-\Pi^{N,P}u||_\cV\leq \tilde{C}\min_{w\in\cV^{N,P}}||u-w||_\cV,
$$
where again $\tilde{C}$ depends on the continuity and coercivity constants of $\cA$. Using the monotonicity and density assumptions on the $\cV^{N,P}$'s we thus get that
\begin{equation}
\label{eq:conv-proj}
\forall u\in\cV,\quad ||u-\Pi^{N,P}u||_\cV\xrightarrow[N\to\infty,P\to\infty]{}0.
\end{equation}
In the sequel we note $\Pi$ for $\Pi^{N,P}$ in order to lighten notations.  For any $y\in C(0,T;\cH)$, $t\in[0,T]$ we denote~$y(t):=y(t,\cdot,\cdot)$.

\vspace{0.2cm}
Then for any $0\leq m\leq M$ we set 
$$
e^m_{N,P}=v^{m,N,P}-\Pi v(t_m)$$
(here we follow for example \cite{raviart} §7.5).
 Note that \eqref{eq:theta-scheme} is equivalent to: $\forall \, 0\leq m\leq M-1$, $\forall \, w\in\cV^{N,P}$,
$$
\frac{1}{\Delta t} \langle v^{m+1,N,P}-v^{m,N,P},w\rangle_\cH+\cA\big(\theta v^{m+1,N,P}+(1-\theta)v^{m,N,P}\,,\,w\big)=0,$$
so that by some algebraic computations we see that: $\forall \, 0\leq m\leq M-1$, $\forall w\in\cV^{N,P}$,
\begin{equation}
\label{eq:erreur-sol-faible}
\frac{1}{\Delta t} \langle e^{m+1}_{N,P}-e^{m}_{N,P},w\rangle_\cH+\cA\big(\theta e^{m+1}_{N,P}+(1-\theta)e^{m}_{N,P}\,,\,w\big)
=\langle \varepsilon^m_{N,P}\,,\,w\rangle_\cH
\end{equation}
where $\varepsilon^m_{N,P}\in\cV'$ is defined for any $0\leq m\leq M-1$ by: $\forall \, w\in\cV$,
\begin{equation}
\label{eq:def-eps-m}
\langle \varepsilon^m_{N,P}\,,\,w\rangle_\cH=
-\frac{1}{\Delta t} \langle \Pi v(t_{m+1})-\Pi v(t_m),w\rangle_\cH
-\cA\big(\theta v(t_{m+1})+ (1-\theta)v(t_m)\,,\,w\big).
\end{equation}
In the sequel we denote $e^m$ (resp. $\varepsilon$) for $e^m_{N,P}$ (resp. $\varepsilon^m_{N,P}$).

\vspace{0.2cm}
We now aim at showing that for any $\theta\in[\frac 1 2,1]$ we have the stability result
\begin{equation}
\label{eq:stabilite}
\forall 1\leq m\leq M,\quad ||e^m||_\cH^2\leq ||e^0||_\cH^2+\frac{\Delta t}{c}\sum_{k=1}^{m-1}||\varepsilon^k||_{\cV'}^2.
\end{equation}
Then we will aim at controlling the $||\varepsilon^k||_{\cV'}^2$'s (consistency result; we will only treat the case $\theta=\frac 1 2$). This will allow to get convergence.

\vspace{0.2cm}
\noindent
{\bf Stability.} Here we adapt the energy estimate method to be found pp66-67 in \cite{dautray-vol6}. Let $\theta\in[\frac 1 2,1]$ and fix $0\leq k\leq M-1$.
Taking $w=\theta e^{k+1}+(1-\theta)e^k=:\bar{e}^k$ in \eqref{eq:erreur-sol-faible} we get
\begin{equation}
\label{eq:stab1}
\frac{1}{\Delta t}\langle e^{k+1}-e^{k},\bar{e}^k\rangle_\cH+\cA(\bar{e}_k,\bar{e}_k)=\langle \varepsilon^k\,,\,w\rangle_\cH.
\end{equation}
 Using then the algebraic equality
 $$
 \langle e^{k+1}-e^{k}, \theta e^{k+1}+(1-\theta)e^k\rangle_\cH=\frac 1 2 ||e^{k+1}||_\cH^2-\frac 1 2  ||e^{k}||_\cH^2
 +(\theta-\frac 1 2) ||e^{k+1}-e^k||_\cH^2
 $$ 
 in \eqref{eq:stab1} we get 
 $$
 \frac 1 2 ||e^{k+1}||_\cH^2-\frac 1 2  ||e^{k}||_\cH^2
 +(\theta-\frac 1 2) ||e^{k+1}-e^k||_\cH^2+\Delta t\cA(\bar{e}_k,\bar{e}_k)=\Delta t \langle \varepsilon^k\,,\,w\rangle_\cH$$
 and then
 \begin{equation}
 \label{eq:stab2}
 \frac 1 2 ||e^{k+1}||_\cH^2-\frac 1 2  ||e^{k}||_\cH^2+
 \Delta t\cA(\bar{e}_k,\bar{e}_k)\leq\Delta t \langle \varepsilon^k\,,\,w\rangle_\cH.
 \end{equation}
But for any $f\in\cV'$ and any $w\in\cV$ we have, using Young's inequality and the coercivity of $\cA$,
$$
\langle f,w\rangle_\cH\leq ||f||_{\cV'}||w||_\cV\leq \frac c 2||w||_\cV^2+\frac{1}{2c}||f||_{\cV'}^2\leq \frac 1 2\cA(w,w)+\frac{1}{2c}||f||_{\cV'}^2.$$
Using this in \eqref{eq:stab2} we get
$$
\frac 1 2 ||e^{k+1}||_\cH^2-\frac 1 2  ||e^{k}||_\cH^2+
 \Delta t\cA(\bar{e}_k,\bar{e}_k)\leq\Delta t \Big(\frac 1 2 \cA(\bar{e}^k,\bar{e}^k)+\frac{1}{2c}||\varepsilon^k||_{\cV'}^2\Big)
$$
and then
$
 ||e^{k+1}||_\cH^2-  ||e^{k}||_\cH^2\leq \frac{\Delta t}{c}||\varepsilon^k||_{\cV'}^2
$.
Summing over $0\leq k\leq m-1$ for any $1\leq m\leq M$ we get \eqref{eq:stabilite}.

\vspace{0.2cm}
\noindent
{\bf Consistency.} Here we follow the lines of the proof of Lemma 7.5-1 in \cite{raviart}. Using \eqref{eq:sol-semifaible-para} and the fact that $v\in C^1(0,T;\cV)$ we can rewrite \eqref{eq:def-eps-m} into: $\forall \, w\in\cV$,
$$
\langle \varepsilon^m\,,\,w\rangle_\cH=
\big\langle  \theta \partial_t v(t_{m+1})+(1-\theta)\partial_t v(t_m)   \big\rangle_\cH
-\frac{1}{\Delta t} \big\langle \Pi v(t_{m+1})-\Pi v(t_m),w\big\rangle_\cH
$$
and further: $\forall w\in\cV$,
$$
\begin{array}{lll}
\langle \varepsilon^m\,,\,w\rangle_\cH&=&
\big\langle  \theta \partial_t v(t_{m+1})+(1-\theta)\partial_t v(t_m)   \big\rangle_\cH
-\dfrac{1}{\Delta t} \big\langle  v(t_{m+1})- v(t_m),w\big\rangle_\cH\\
\\
&&\ds \hspace{3cm}+\dfrac{1}{\Delta t}\int_{t_m}^{t_{m+1}} \big\langle  (I-\Pi)\partial_tv(s) \,,\,w \big\rangle_\cH  ds.
\end{array}
$$
From this we get that for any $0\leq m\leq M-1$
$$
||\varepsilon^m||_{\cV'}=||\varepsilon^m||_{\cH}\leq ||\eta(t_m)||_\cH+\frac{1}{\Delta t}\int_{t_m}^{t_{m+1}} \big|\big|  (I-\Pi)\partial_tv(s) \big|\big|_\cH  ds
$$
with
$$
\eta(t_m)=\frac{1}{\Delta t} \big( v(t_{m+1})- v(t_m)\big)-\theta \partial_t v(t_{m+1})-(1-\theta)\partial_t v(t_m).
$$
We take now $\theta=\frac 1 2$. Using a Taylor expansion with rest in integral form and the fact that 
$v\in C^3(0,T;\cH)$ we get:

\medskip
$$\begin{array}{rcl}
\eta(t_m) & 
 = &\ds \frac{1}{\Delta t}\Big(  \partial_tv(t_m)\Delta t+\frac 1 2 \partial^2_{t^2}v(t_m)(\Delta t)^2+
\frac 1 2 \int_{t_m}^{t_{m+1}}\partial^3_{t^3}v(s)(t_{m+1}-s)^2ds  \Big)\\\\
 & &\;\;\;\;\;\;\;\;\;\;\;\;\;\;\;\;\;\;\;\;\;\;\;\;\;\;\;\;\;\;\;\;\;\;\;\;\;\;\;\;\;\;\;\;\;\;\;\;\;\;\;\;\;\;\;\;\;\;\;\;\;\;\;\;\;\;\;\;\;\displaystyle -\frac 1 2 \partial_t v(t_{m+1})-\frac 1 2 \partial_t v(t_m)\\
&=&\ds \frac 1 2 \partial^2_{t^2}v(t_m)\,\Delta t+
\frac {1}{ 2\Delta t} \int_{t_m}^{t_{m+1}}\partial^3_{t^3}v(s)(t_{m+1}-s)^2ds\\
& &\;\;\;\;\;\;\;\;\;\;\;\;\;\;\;\;\;\;\;\;\;\;\;\;\;\;\;\;\;\;\;\;\;\;\;\;\;\;\;\displaystyle -\frac 1 2 \Big( \partial^2_{t^2}v(t_m)\,\Delta t + \int_{t_m}^{t_{m+1}}\partial^3_{t^3}v(s)(t_{m+1}-s)ds\Big)\\\\
& = &\ds \int_{t_m}^{t_{m+1}}\partial^3_{t^3}v(s)(t_{m+1}-s) \big[-\frac 1 2 + \frac {1}{ 2\Delta t}(t_{m+1}-s)\big]ds\\\\
&=& \ds \frac {1}{ 2\Delta t} \int_{t_m}^{t_{m+1}}  (t_{m+1}-s)(t_n-s)  \partial^3_{t^3}v(s)\,ds.
\end{array}$$

Then we have $||\eta(t_m)||_\cH\leq C\Delta t\int_{t_m}^{t_{m+1}}\big|\big|\partial^3_{t^3}v(s)    \big|\big| _\cH ds$ where $C$ is some universal constant. To sum up we have for any $0\leq m\leq M-1$
\begin{equation*}
||\varepsilon^m||_{\cV'}\leq \frac{1}{\Delta t}\int_{t_m}^{t_{m+1}} \big|\big|  (I-\Pi)\partial_tv(s) \big|\big|_\cH  ds
+C\Delta t\int_{t_m}^{t_{m+1}}\big|\big|\partial^3_{t^3}v(s)    \big|\big| _\cH ds
\end{equation*}
and, using Cauchy-Schwarz inequality,
\begin{equation}
\label{eq:consist}
||\varepsilon^m||_{\cV'}\leq \frac{1}{\sqrt{\Delta t}}\Big(\int_{t_m}^{t_{m+1}} \big|\big|  (I-\Pi)\partial_tv(s) \big|\big|_\cH^2  ds\Big)^{\frac 1 2}
+C(\Delta t)^{\frac 3 2}\Big(\int_{t_m}^{t_{m+1}}\big|\big|\partial^3_{t^3}v(s)    \big|\big| _\cH^2 ds\Big)^{\frac 1 2}.
\end{equation}

\medskip

{\bf Convergence.}
 Let $0\leq m\leq M$. Using now $||v^{m,N,P}-v(t_m)||_\cH\leq ||e^m||_\cH+||v(t_m)-\Pi v(t_m)||_\cH$ and \eqref{eq:stabilite} and \eqref{eq:consist} we get 

\medskip
%\begin{equation}

%\begin{array}{lll}
\noindent $\ds ||v^{m,N,P}-v(t_m)||_\cH\leq\ds ||v(t_m)-\Pi v(t_m)||_\cH+\Big\{  ||f^{N,P}-\Pi f||_\cH^2$

\medskip

$\ds \quad \quad \;\;\;\;\; + \frac 1 c \int_0^{t_m} \big|\big|  (I-\Pi)\partial_tv(s) \big|\big|_\cH^2  ds+\frac C c(\Delta t)^4\int_0^{t_m}\big|\big|\partial^3_{t^3}v(s)    \big|\big| _\cH^2 ds$ 

\medskip

$\ds \quad\quad+ \frac C c(\Delta t)^2  \ds\sum_{k=1}^{m-1}\sum_{j=1}^{m-1}\big( \int_{t_k}^{t_{k+1}} \big|\big|  (I-\Pi)\partial_tv(s) \big|\big|_\cH^2  ds
\times \int_{t_j}^{t_{j+1}} \big|\big|\partial^3_{t^3}v(s)    \big|\big| _\cH^2 ds 
  \big)^{\frac 1 2}
\Big\}^{\frac 1 2}$

\medskip

$\leq \ds \sup_{s\in[0,T]}||(I-\Pi)v(s)||_\cH+ \Big\{ \big(||f^{N,P}-f||_\cH+||f-\Pi f||_\cH\big)^2$

\medskip

$\ds\quad \quad \quad+ \frac T c \sup_{s\in[0,T]}||(I-\Pi)\partial_t v(s)||_\cH^2+\frac C c ||\partial^3_{t^3}v||_{L^2(0,T;\cH)}^2\,(\Delta t)^4$

\begin{equation}\label{eq:conv-para-1}
 \quad+\frac C c T^2\sup_{s\in[0,T]}||\partial^3_{t^3}v(s)||_{\cH}\,\Delta t\, \sup_{s\in[0,T]}||(I-\Pi)\partial_t v(s)||_\cH \Big\}^{\frac 1 2}
\end{equation}
Recall now that we have $||f^{N,P}-f||_\cH\to 0$ as $N,P\to\infty$ and \eqref{eq:conv-proj}. If $y\in C(0,T;\cV)$ the family 
$(I-\Pi)y(s)$, $s\in[0,T]$, is equicontinuous, and we know thanks to \eqref{eq:conv-proj} that for any $s\in[0,T]$ we have
$||(I-\Pi)y(s)||_\cH \to 0$ as $N,P\to\infty$. With the help of Ascoli theorem we can see that we have then
$$
\sup_{s\in[0,T]}||(I-\Pi)y(s)||_\cH\xrightarrow[N\to\infty,P\to\infty]{}0.
$$
Using this with $y=v,\;\partial_tv$, in \eqref{eq:conv-para-1} we get the announced convergence (note that it is in order $(\Delta t)^2$ is time;
cf Remark \ref{rem:crank}).
\end{proof}

\bibliographystyle{spbasic}
\bibliography{BIB_UQ_EDS1-bis.bib}

\end{document}